\newtheorem{lm}{Lemma}[section]
\newtheorem{teo}[lm]{Theorem}
\newtheorem{prop}[lm]{Proposition}
\newtheorem{coro}[lm]{Corollary}
\newtheorem*{teoa}{Theorem A}
\newtheorem*{teob}{Theorem B}
\theoremstyle{definition}
\newtheorem{defi}[lm]{Definition}
\newtheorem{oss}[lm]{Remark}
\newtheorem{exa}[lm]{Example}
\newtheorem*{ack}{Acknowledgments}
\numberwithin{equation}{section}
\newcommand\co{\mbox{%
\raisebox{.8ex}{\rm c}%
\kern-.175em\raisebox{.2ex}{/}%
\kern-.18em\raisebox{-.2ex}{\rm o}%
}~}
\author[Brasco]{Lorenzo Brasco}
\address[L.\ Brasco]{Dipartimento di Matematica e Informatica
\newline\indent
Universit\`a degli Studi di Ferrara
\newline\indent
Via Machiavelli 35, 44121 Ferrara, Italy}
\email{lorenzo.brasco@unife.it}
\author[De Philippis]{Guido De Philippis}
\address[G.~De Philippis]{Courant Institute of Mathematical Sciences
\newline\indent
New York University
\newline\indent
251 Mercer St., New York, NY 10012, USA.}
\email{guido@cims.nyu.edu}
\author[Franzina]{Giovanni Franzina}
\address[G.\ Franzina]{Istituto Nazionale di Alta Matematica (In{\rm d}am)
\newline\indent
Unit\`a di Ricerca di Firenze \co DiMaI ``Ulisse Dini'' 
\newline\indent 
Universit\`a degli Studi di Firenze
\newline\indent 
Viale Morgagni 67/A, 50134 Firenze, Italy}
\email{giovanni.franzina@unifi.it}
\keywords{Eigenvalues, Lane-Emden equation, constrained critical points, cone condition.}
\subjclass[2010]{35P30, 49R05, 58E05}
\title[sublinear Lane-Emden equation]{Positive solutions to the\\sublinear Lane-Emden equation\\ are isolated}
\begin{document}

\begin{abstract}
We prove that on a smooth bounded set, the positive least energy solution of the Lane-Emden equation with sublinear power is isolated. As a corollary, we obtain that the first $q-$eigenvalue of the Dirichlet-Laplacian is not an accumulation point of the $q-$spectrum, on a smooth bounded set. Our results extend to a suitable class of Lipschitz domains, as well.
\end{abstract}

\maketitle

\begin{center}
\begin{minipage}{11cm}
\small
\tableofcontents
\end{minipage}
\end{center}

\section{Introduction}

\subsection{Overview}
We consider an open bounded set $\Omega\subset\mathbb{R}^N$, with its associated {\it homogeneous Sobolev space} $\mathcal{D}^{1,2}_0(\Omega)$. The latter is defined as the completion of $C^\infty_0(\Omega)$ with respect to the norm
\[
\|\varphi\|_{\mathcal{D}^{1,2}_0(\Omega)}=\left(\int_\Omega |\nabla \varphi|^2\,dx\right)^\frac{1}{2},\qquad \text{ for } \varphi\in C^\infty_0(\Omega).
\]
The notation $C^\infty_0(\Omega)$ stands for the set of $C^\infty$ functions with compact support in $\Omega$.
We recall that an open bounded set $\Omega$ supports a Poincar\'e inequality of the type
\begin{equation}
\label{poinpoin}
\frac{1}{C}\,\int_\Omega |\varphi|^2\le \int_\Omega |\nabla \varphi|^2\,dx,\qquad \text{ for every }\varphi\in C^\infty_0(\Omega),
\end{equation}
thus the space $\mathcal{D}^{1,2}_0(\Omega)$ coincides with the closure of $C^\infty_0(\Omega)$ in the standard Sobolev space $W^{1,2}(\Omega)$.
\par
It is well-known that in this setting the Dirichlet-Laplacian operator on $\Omega$ has a discrete spectrum, made of positive eigenvalues accumulating at $+\infty$. In other words, the boundary value problem
\[
-\Delta u=\lambda\,u,\quad \text{ in }\Omega,\qquad u=0, \quad \text{ on }\partial\Omega,
\]
admits non-trivial solutions $u\in\mathcal{D}^{1,2}_0(\Omega)$ only for a discrete set of characteristic values $\lambda$, that we indicate with $0<\lambda_1(\Omega)\le \lambda_2(\Omega)\le \dots$. If $u\in\mathcal{D}^{1,2}_0(\Omega)$ solves the above equation with $\lambda=\lambda_i(\Omega)$, it is called an {\it eigenfunction associated to $\lambda_i(\Omega)$}.
\par
It is easy to see that these eigenvalues $\lambda_i(\Omega)$ can be understood as the critical values of the Dirichlet integral
\[
\varphi\mapsto \int_\Omega |\nabla \varphi|^2\,dx, 
\]
constrained to the manifold
\[
\mathcal{S}_2(\Omega)=\Big\{\varphi\in\mathcal{D}^{1,2}_0(\Omega)\, :\, \|\varphi\|_{L^2(\Omega)}=1\Big\}.
\]
The associated critical points correspond to the eigenfunctions of the Dirichlet-Laplacian, normalized in order to have unit $L^2$ norm. 
In particular, the first eigenvalue corresponds to the global constrained minimum, i.e.
\[
\lambda_1(\Omega)=\min_{\varphi\in\mathcal{D}^{1,2}_0(\Omega)}\left\{\int_\Omega |\nabla \varphi|^2\,dx\, :\, \|\varphi\|_{L^2(\Omega)}=1\right\},
\] 
which in turn gives the sharp constant in \eqref{poinpoin}.
\vskip.2cm\noindent
One may wonder what happens when the constraint $\mathcal{S}_2(\Omega)$ is replaced by the more general one
\[
\mathcal{S}_q(\Omega)=\Big\{u\in\mathcal{D}^{1,2}_0(\Omega)\, :\, \|u\|_{L^q(\Omega)}=1\Big\},
\]
where $q\not =2$ and\footnote{We use the usual notation $2^*$ for the critical Sobolev exponent, i.e.
\[
2^*=\frac{2\,N}{N-2},\ \text{ for } N\ge3,\qquad 2^*=+\infty, \ \text { for }N=2.
\]} $1<q<2^*$.
In this case, by the Lagrange's multipliers rule, the relevant elliptic equation is given by
\[
-\Delta u=\lambda\,|u|^{q-2}\,u,\ \text{ in } \Omega,\qquad u=0, \ \text{ on }\partial\Omega.
\]
If we want to get rid of the normalization on the $L^q$ norm, then the equation should be written in the following form
\begin{equation}
\label{autovalorenonlocal}
-\Delta u=\lambda\,\|u\|_{L^q(\Omega)}^{2-q}\, |u|^{q-2}\,u,\ \text{ in } \Omega,\qquad u=0, \ \text{ on }\partial\Omega.
\end{equation}
We define the {\it $q-$spectrum of the Dirichlet-Laplacian on $\Omega$} as
\[
\mathrm{Spec}(\Omega;q)=\Big\{\lambda\in \mathbb{R}\, :\, \text{equation \eqref{autovalorenonlocal} admits a solution in } \mathcal{D}^{1,2}_0(\Omega)\setminus\{0\}\Big\}.
\]
Each element $\lambda$ of this set is called a {\it $q-$eigenvalue}, while an associated solution of \eqref{autovalorenonlocal} will be called {\it $q-$eigenfunction}.
\par
Some basic properties of this eigenvalue--type problem has been recently collected in the survey paper \cite{BF}. Let us briefly recall them, by referring to \cite{BF} for all the missing details.
\par
First of all, by using standard variational techniques from Critical Point Theory, it is relatively easy to produce an infinite sequence of $q-$eigenvalues, diverging at $+\infty$. For every $k\in\mathbb{N}\setminus\{0\}$, these are given by
\begin{equation}
\label{CFWLS}
\lambda_{k,LS}(\Omega;q)=\inf_{\mathcal{F}\in \Sigma_k(\Omega;q)}\left\{\max_{\varphi\in\mathcal{F}} \int_\Omega |\nabla\varphi|^2\,dx\right\},
\end{equation}
where 
\[
\Sigma_k(\Omega;q)=\Big\{\mathcal{F}\subset \mathcal{S}_q(\Omega)\, :\, \mathcal{F} \text{ compact and symmetric with } \gamma(\mathcal{F})\ge k\Big\},
\]
and $\gamma$ is the {\it Krasnosel'ski\u{\i} genus}, defined by
\[
\gamma(\mathcal{F})=\inf\Big\{k\in\mathbb{N}\setminus\{0\}\, :\, \exists \text{ a continuous odd map } \phi:\mathcal{F}\to\mathbb{S}^{k-1}\Big\}.
\] 
We recall that formula \eqref{CFWLS} is reminiscent of the celebrated {\it Courant-Fischer-Weyl min-max principle} for the eigenvalues of the Laplacian (see \cite[equation (1.32)]{He}).
\par
For $k=1$, it is not difficult to see that formula \eqref{CFWLS} reduces to the sharp Poincar\'e-Sobolev constant
\[
\lambda_1(\Omega;q)=\min_{\varphi\in\mathcal{D}^{1,2}_0(\Omega)} \left\{\int_\Omega |\nabla \varphi|^2\,dx\, :\, \int_\Omega |\varphi|^q\,dx=1\right\},
\]
and that this is the first $q-$eigenvalue of $\Omega$, i.e. 
\[
\lambda_1(\Omega;q)\in \mathrm{Spec}(\Omega;q)\qquad \text{ and }\qquad \lambda\ge \lambda_1(\Omega;q) \text{ for every }\lambda\in\mathrm{Spec}(\Omega;q).
\]
Let us now specialize the discussion to the case $1<q<2$. In this case, for a generic open set we have
\[
\Big\{\lambda_{k,LS}(\Omega;q)\Big\}_{k\in\mathbb{N}\setminus\{0\}}\not=\mathrm{Spec}(\Omega;q),
\]
and $\mathrm{Spec}(\Omega;q)$ is not discrete. Even worse, one can produce examples of sets $\Omega$ for which the first $q-$eigenvalue is not isolated, i.e. it is an accumulation point for $\mathrm{Spec}(\Omega;q)$ (see \cite[Theorem 3.2]{BFcontro}).
\par
Examples of this last phenomenon are quite pathological, i.e. they are sets made of countably many connected components. It is thus reasonable to ask whether $\lambda_1(\Omega;q)$ is isolated or not, for connected sets or sets with a finite number of connected components. 
\par
This leads us to the question tackled in this paper: find classes of ``good'' sets such that the first $q-$eigenvalue is isolated, for $1<q<2$.

\subsection{The Lane-Emden equation}
The equation \eqref{autovalorenonlocal} may look weird at a first sight, but actually it is just a scaled version of the celebrated and well-studied {\it Lane-Emden equation}. 
More precisely, observe that equation \eqref{autovalorenonlocal} is no more linear, but it is still $1-$homogeneous. This means that if $u\in\mathcal{D}^{1,2}_0(\Omega)$ is a solution, then $t\,u$ is still a solution for every $t\in\mathbb{R}$. Thus, if we take a $q-$eigenfunction $u$ such that
\[
\|u\|_{L^q(\Omega)}=\lambda^\frac{1}{q-2},
\]
we get that this solves the usual Lane-Emden equation
\begin{equation}
\label{LE}
-\Delta u=|u|^{q-2}\,u,\ \text{ in }\Omega.
\end{equation}
This semilinear elliptic equation naturally arises in many fields, here we just want to mention that its solutions dictate the large time behavior of  solutions to the Cauchy-Dirichlet problem for the {\it Porous Medium Equation}, i.e.
\begin{equation}
\label{PME}
\left\{\begin{array}{rcll}
\Delta (|u|^{m-1}\,u)&=&u_t,& \text{ in } \Omega\times (0,+\infty),\\
u&=&0,& \text{ on } \partial\Omega\times (0,+\infty),\\
u(\cdot,0)&=& u_0, & \text{ in }\Omega,
\end{array}
\right.
\end{equation}
where 
\[
m=\frac{1}{q-1}.
\]
The reader can see for example \cite[Theorem 3]{AP} and \cite[Theorems 1.1 and 2.1]{Va}.
In this respect, we can say that equation \eqref{LE} plays the same role with respect to \eqref{PME}, as the usual eigenvalue equation does for the heat equation.
\par
Now, it turns out that the question whether $\lambda_1(\Omega;q)$ is isolated or not is tightly connected with the question whether the positive least energy solution of \eqref{LE} is isolated in the set of solutions or not. Again, for a general open bounded sets, this is not true: as above, a counterexample is given by any set with countably many connected components.

\subsection{Main results}

We now present the main results of this paper, by postponing some comments on the assumptions to Remark \ref{oss:assumptions} below.
\begin{teoa}
\label{teo:main}
Let $1<q<2$ and let $\Omega\subset\mathbb{R}^N$ be a $C^1$ open bounded set, with a finite number of connected components. We indicate by $w_{\Omega,q}$ the positive least energy in $\Omega$ of \eqref{LE}, i.e. the unique positive minimizer of the energy
\begin{equation}
\label{energy}
\mathfrak{F}_q(\varphi)=\frac{1}{2}\,\int_\Omega |\nabla \varphi|^2\,dx-\frac{1}{q}\,\int_\Omega |\varphi|^q\,dx.
\end{equation}
Then $w_{\Omega,q}$ is {\rm isolated} in the $L^1(\Omega)$ norm topology, i.e. there exists $\delta>0$ such that the neighborhood
\[
\mathcal{I}_\delta(w_{\Omega,q})=\Big\{\varphi\in\mathcal{D}^{1,2}_0(\Omega)\, :\, \|\varphi-w_{\Omega,q}\|_{L^1(\Omega)}< \delta\Big\},
\]
does not contain any other solution of the Lane-Emden equation.
\end{teoa}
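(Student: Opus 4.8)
The plan is to argue by contradiction and exploit the variational characterization of $w_{\Omega,q}$ together with the strict convexity structure that the sublinear power $1<q<2$ provides. Suppose there is a sequence $(u_n)$ of solutions to \eqref{LE}, all distinct from $w:=w_{\Omega,q}$, with $\|u_n-w\|_{L^1(\Omega)}\to 0$. Since every nontrivial solution of \eqref{LE} satisfies $\int_\Omega|\nabla u_n|^2\,dx=\int_\Omega|u_n|^q\,dx$, the scaling structure forces $\|u_n\|_{\mathcal{D}^{1,2}_0(\Omega)}$ and $\|u_n\|_{L^q(\Omega)}$ to be bounded away from $0$ and $\infty$ (for instance because $\lambda_1(\Omega;q)$ controls the ratio of these norms and the equation fixes the homogeneity). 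Elliptic regularity on the $C^1$ bounded set $\Omega$ then upgrades the $L^1$ convergence: the $u_n$ are uniformly bounded in $C^{1,\alpha}(\overline\Omega)$, and up to a subsequence $u_n\to u_\infty$ in $C^1(\overline\Omega)$, where $u_\infty$ is again a solution of \eqref{LE}; the $L^1$ hypothesis pins down $u_\infty=w$. In particular $u_n\to w$ strongly in $\mathcal{D}^{1,2}_0(\Omega)$ and uniformly, and since $w>0$ in $\Omega$ with $\partial_\nu w<0$ on $\partial\Omega$ (Hopf), for $n$ large $u_n>0$ in $\Omega$ as well, so each $u_n$ is in fact a positive solution.

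The next step is to linearize. Write $u_n=w+\varphi_n$ with $\varphi_n\to 0$ in $\mathcal{D}^{1,2}_0(\Omega)\cap C^1(\overline\Omega)$ and $\varphi_n\not\equiv 0$. Subtracting the equations for $u_n$ and $w$ gives
\[
-\Delta\varphi_n = u_n^{q-1}-w^{q-1}\quad\text{in }\Omega,\qquad \varphi_n=0\ \text{on }\partial\Omega.
\]
Testing against $\varphi_n$ itself and using the mean value theorem $u_n^{q-1}-w^{q-1}=(q-1)\,\xi_n^{q-2}\,\varphi_n$ for an intermediate function $\xi_n$ with $\min(u_n,w)\le\xi_n\le\max(u_n,w)$, one obtains
\[
\int_\Omega|\nabla\varphi_n|^2\,dx=(q-1)\int_\Omega \xi_n^{q-2}\,\varphi_n^2\,dx.
\]
Since $\xi_n\to w$ uniformly and $w$ vanishes only at the boundary where it decays linearly, $\xi_n^{q-2}\to w^{q-2}$ in a controlled way (the singularity $w^{q-2}$ near $\partial\Omega$ is integrable against $\varphi_n^2$ because $\varphi_n$ also vanishes linearly at $\partial\Omega$). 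Normalizing $\psi_n:=\varphi_n/\|\varphi_n\|_{\mathcal{D}^{1,2}_0(\Omega)}$, we would extract a weak limit $\psi_\infty\in\mathcal{D}^{1,2}_0(\Omega)$ solving the linearized equation
\[
-\Delta\psi_\infty=(q-1)\,w^{q-2}\,\psi_\infty\quad\text{in }\Omega,\qquad \int_\Omega|\nabla\psi_\infty|^2\,dx=(q-1)\int_\Omega w^{q-2}\,\psi_\infty^2\,dx,
\]
i.e. $1$ is an eigenvalue of the weighted problem with weight $(q-1)w^{q-2}$, and the limiting identity shows the convergence $\psi_n\to\psi_\infty$ is strong, so $\psi_\infty\not\equiv 0$.

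The contradiction comes from showing that $1$ is \emph{not} such an eigenvalue — more precisely that the linearized operator $L_w\psi=-\Delta\psi-(q-1)w^{q-2}\psi$ is positive definite on $\mathcal{D}^{1,2}_0(\Omega)$. This is exactly the manifestation of the fact that $w$ is the \emph{least energy} (and, by the results recalled in the introduction, the \emph{unique}) positive solution: because $q-1<1$, the functional $\mathfrak{F}_q$ restricted to the cone of positive functions is, after the change of variables $\varphi=v^{1/(q\text{-ish})}$, strictly convex, so its unique critical point is a strict, nondegenerate minimum. Concretely, for $\psi\in\mathcal{D}^{1,2}_0(\Omega)$ with $\psi\ge 0$ one has by the hidden convexity (Benguria–Brézis–Lieb / Díaz–Saá type inequality) that
\[
\int_\Omega|\nabla\psi|^2\,dx\ \ge\ (q-1)\int_\Omega w^{q-2}\,\psi^2\,dx,
\]
with equality only if $\psi$ is proportional to $w$; but $w$ itself satisfies $\int_\Omega|\nabla w|^2=\int_\Omega w^q>(q-1)\int_\Omega w^q$ strictly since $q-1<1$, so equality never holds and $L_w$ has trivial kernel and positive first eigenvalue. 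For sign-changing $\psi$ one splits into $\psi^\pm$ and applies the inequality to each part (here the $C^1$ convergence and the Hopf lemma guarantee the test functions land in the admissible class near $\partial\Omega$). This forces $\psi_\infty\equiv 0$, contradicting $\|\psi_\infty\|_{\mathcal{D}^{1,2}_0(\Omega)}=1$.

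The main obstacle I expect is the boundary analysis: controlling the weight $w^{q-2}$, which blows up at $\partial\Omega$, in all the integrations by parts and in passing to the limit in $\int_\Omega\xi_n^{q-2}\varphi_n^2$. This is where the $C^1$ regularity of $\Omega$ and the quantitative Hopf estimate ($c\,d(x)\le w(x)\le C\,d(x)$ near $\partial\Omega$) are essential — they make $w^{q-2}$ behave like $d(x)^{q-2}$, which is integrable for $q>1$ and, crucially, is tamed by the linear vanishing of $\varphi_n$ and $\psi_n$ so that the Hardy-type quantities stay finite and converge. The extension to the Lipschitz class announced in the abstract presumably replaces this clean two-sided estimate by a weaker cone-condition substitute, which is why the hypotheses are stated the way they are.
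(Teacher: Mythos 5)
Your outline --- argue by contradiction, linearize around $w_{\Omega,q}$, normalize the differences, and derive a contradiction from the positivity of $\varphi\mapsto-\Delta\varphi-(q-1)\,w_{\Omega,q}^{q-2}\,\varphi$ via a Picone/hidden-convexity inequality --- coincides with the paper's strategy, and your final contradiction is in substance the same as the paper's ($1\le q-1<1$, via the Hardy--Lane--Emden inequality of Proposition \ref{lm:HLE}). However, two of your intermediate steps have genuine gaps. First, the boundary control you invoke is not available on a merely $C^1$ set: Hopf's lemma and the two-sided bound $c\,d_\Omega\le w_{\Omega,q}\le C\,d_\Omega$ require an interior ball (or Dini) condition, and uniform $C^{1,\alpha}(\overline\Omega)$ bounds require more than $C^1$ boundary regularity. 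For a general $C^1$ domain the solution can decay faster than linearly at some boundary points, which is exactly why the paper only establishes $w_{\Omega,q}\ge d_\Omega^{\alpha}/\mathcal{C}$ for every $\alpha>1$ (Corollary \ref{coro:fiesta}, via the Green function estimates of \cite{CMMR}). Consequently your claim that $u_n>0$ for large $n$ is unjustified --- and note that if it were true, the rest of your argument would be superfluous, since nonnegative solutions form a finite set and $L^1$-closeness would force $u_n=w_{\Omega,q}$ at once. The real difficulty is ruling out \emph{sign-changing} solutions near $w_{\Omega,q}$, for which the mean-value linearization $|u_n|^{q-2}u_n-w^{q-1}=(q-1)\,\xi_n^{q-2}\,\varphi_n$ degenerates where $u_n$ vanishes or changes sign; the paper instead uses the difference quotient $W_n$ together with the elementary pointwise bound $0\le W_n\le 2^{2-q}\,w_{\Omega,q}^{q-2}$ of Lemma \ref{lm:quasilipschitz}.

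Second, and this is the analytic core of the paper, you assert that ``the limiting identity shows the convergence is strong, so $\psi_\infty\not\equiv0$.'' Nothing in your argument prevents the normalized sequence from concentrating its weighted $L^2$ mass near $\partial\Omega$ and converging weakly to $0$. What is needed is precisely the compactness of the embedding $\mathcal{D}^{1,2}_0(\Omega)\hookrightarrow L^2(\Omega;w_{\Omega,q}^{q-2})$, which the paper proves by interpolating between the classical Hardy inequality \eqref{necas} and the compact embedding into $L^2(\Omega)$, using the lower bound $w_{\Omega,q}\ge d_\Omega^{\alpha}/\mathcal{C}$ with $\alpha<2/(2-q)$ (Proposition \ref{lm:weighted}). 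Appendix C shows that this compactness genuinely fails in narrow cones, so the step cannot be waved through. Your closing paragraph correctly identifies this as the main obstacle, but the proposed remedy rests on the unavailable estimate $w_{\Omega,q}\ge c\,d_\Omega$ and, even granting it, does not actually carry out the compactness argument.
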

The previous result implies the following one, which has been announced in \cite{BF}.
\begin{teob}
\label{teo:main2}
Let $1<q<2$ and let $\Omega\subset\mathbb{R}^N$ be a $C^1$ open bounded set, with a finite number of connected components. 
Then the first $q-$eigenvalue $\lambda_1(\Omega;q)$
is isolated in $\mathrm{Spec}(\Omega;q)$.
\end{teob}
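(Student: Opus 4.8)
The plan is to deduce Theorem B from Theorem A, so the first task is to pin down exactly how elements of $\mathrm{Spec}(\Omega;q)$ correspond to solutions of the Lane-Emden equation \eqref{LE}. Recall that if $\lambda \in \mathrm{Spec}(\Omega;q)$ with $q$-eigenfunction $u$, then by $1$-homogeneity of \eqref{autovalorenonlocal} we may rescale $u$ so that $\|u\|_{L^q(\Omega)} = \lambda^{1/(q-2)}$, and then $v := \lambda^{1/(q-2)}\, u / \|u\|_{L^q(\Omega)}$ — more cleanly, the rescaled function — solves \eqref{LE}. Conversely, a nontrivial solution $v$ of \eqref{LE} gives back a $q$-eigenvalue $\lambda = \|v\|_{L^q(\Omega)}^{q-2}$. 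So I would first establish the bookkeeping identity: for a solution $v \neq 0$ of \eqref{LE}, testing the equation with $v$ itself gives $\int_\Omega |\nabla v|^2\,dx = \int_\Omega |v|^q\,dx =: t$, and then $\lambda = t^{(q-2)/q}$. Thus $\lambda_1(\Omega;q)$ corresponds to the solution of \eqref{LE} of minimal $L^q$ norm, which is exactly the positive least energy solution $w_{\Omega,q}$ of \eqref{energy} (one checks $\mathfrak{F}_q(v) = (1/2 - 1/q)\,t < 0$ is minimized precisely when $t$ is largest among \emph{positive} solutions — need to be slightly careful here about sign conventions, since $1/2 - 1/q < 0$, so minimal energy means maximal $L^q$ norm, while the smallest $q$-eigenvalue corresponds to... let me re-examine: $\lambda_1(\Omega;q)$ is the \emph{smallest} element of the spectrum, i.e. the minimizer of the Rayleigh quotient $\int|\nabla\varphi|^2 / (\int|\varphi|^q)^{2/q}$, and this is attained, and the minimizer can be taken positive).

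Next, suppose for contradiction that $\lambda_1(\Omega;q)$ is not isolated in $\mathrm{Spec}(\Omega;q)$: there is a sequence $\lambda_n \in \mathrm{Spec}(\Omega;q)$ with $\lambda_n \to \lambda_1(\Omega;q)$ and $\lambda_n \neq \lambda_1(\Omega;q)$. Pick associated $q$-eigenfunctions $u_n$ and rescale them to solutions $v_n$ of \eqref{LE} with $\|v_n\|_{L^q(\Omega)} = \lambda_n^{1/(q-2)}$. The key analytic point is then a compactness/convergence argument: since $\lambda_n$ is bounded, $\|v_n\|_{L^q(\Omega)}$ is bounded, hence by the equation $\int_\Omega |\nabla v_n|^2\,dx = \int_\Omega |v_n|^q\,dx$ is bounded, so $(v_n)$ is bounded in $\mathcal{D}^{1,2}_0(\Omega)$. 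Up to subsequence $v_n \rightharpoonup v$ weakly in $\mathcal{D}^{1,2}_0(\Omega)$ and strongly in $L^q(\Omega)$ (Rellich, since $q < 2^*$) and in $L^1(\Omega)$. Passing to the limit in the weak formulation, $v$ solves \eqref{LE}, and $\|v\|_{L^q(\Omega)} = \lim \|v_n\|_{L^q(\Omega)} = \lambda_1(\Omega;q)^{1/(q-2)}$, so $v$ is a (possibly sign-changing, a priori) solution realizing the first $q$-eigenvalue; by the characterization of $\lambda_1(\Omega;q)$ as the minimum of the Rayleigh quotient and uniqueness of the positive minimizer, $v = \pm w_{\Omega,q}$, and after replacing $v_n$ by $-v_n$ if necessary we get $v_n \to w_{\Omega,q}$ in $L^1(\Omega)$.

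Now apply Theorem A: there is $\delta > 0$ such that no solution of \eqref{LE} other than $w_{\Omega,q}$ lies in $\mathcal{I}_\delta(w_{\Omega,q})$. For $n$ large, $\|v_n - w_{\Omega,q}\|_{L^1(\Omega)} < \delta$, so $v_n = w_{\Omega,q}$, whence $\lambda_n = \|v_n\|_{L^q(\Omega)}^{q-2} = \|w_{\Omega,q}\|_{L^q(\Omega)}^{q-2} = \lambda_1(\Omega;q)$, contradicting $\lambda_n \neq \lambda_1(\Omega;q)$. This completes the proof.

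The main obstacle — and the step requiring the most care — is the convergence step: one must verify that the weak $\mathcal{D}^{1,2}_0$-limit $v$ is genuinely nonzero (which it is, since $\|v\|_{L^q} = \lambda_1^{1/(q-2)} > 0$ by strong $L^q$-convergence) and that it must coincide with $\pm w_{\Omega,q}$ rather than merely being \emph{some} solution at energy level $\lambda_1$. This uses that the positive least energy solution is the \emph{unique} minimizer of the Rayleigh quotient among positive functions, together with the fact that a minimizer of the Rayleigh quotient, being a ground state, does not change sign (by replacing $\varphi$ with $|\varphi|$, which does not increase the quotient, and then invoking strict positivity via the Harnack inequality / strong maximum principle on each connected component). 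A secondary subtlety: if $\Omega$ has several connected components, $w_{\Omega,q}$ is supported on the component(s) where the Poincaré–Sobolev constant is smallest, and one should make sure the limit $v$ inherits the same support structure; this is automatic from the equality $\|v\|_{L^q} = \|w_{\Omega,q}\|_{L^q}$ and the uniqueness statement. No new hard analysis beyond standard Rellich compactness and the maximum principle is needed here — the real content is already in Theorem A.
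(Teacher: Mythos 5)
Your reduction of Theorem B to Theorem A is sound, and for \emph{connected} $\Omega$ the argument is essentially correct and parallels the paper's own proof (Case 1 of Proposition \ref{prop:bis}): the rescaled eigenfunctions are bounded in $\mathcal{D}^{1,2}_0(\Omega)$, a subsequence converges weakly in $\mathcal{D}^{1,2}_0(\Omega)$ and strongly in $L^q(\Omega)$ to a nontrivial solution $v$ of \eqref{LE}, and by lower semicontinuity of the Dirichlet energy (a point you should make explicit: a priori $v$ is only an eigenfunction \emph{associated to} $\lambda_1(\Omega;q)$, and you need $\int|\nabla v|^2\le\liminf\int|\nabla v_n|^2$ to conclude it actually minimizes the Rayleigh quotient) $v$ is a first eigenfunction; simplicity on connected sets then gives $v=\pm w_{\Omega,q}$ and Theorem A yields the contradiction.

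The genuine gap is in the disconnected case, and it stems from a misconception in your closing paragraph. For $1<q<2$ the least energy solution is \emph{not} concentrated on the component with smallest Poincar\'e--Sobolev constant: by \eqref{sommine} one has $w_{\Omega,q}=\sum_{i=1}^k w_{\Omega_i,q}$, positive on \emph{every} component, and correspondingly $\lambda_1(\Omega;q)$ is \emph{not simple} (Remark \ref{oss:multiple}). The Rayleigh quotient is invariant under flipping the sign of $\varphi$ independently on each $\Omega_i$, so the normalized first eigenfunctions form a family of $2^k$ functions, and your limit $v$ may equal $\sum_i\epsilon_i\,w_{\Omega_i,q}$ with $\epsilon_i\in\{-1,+1\}$ not all equal; replacing $v_n$ by $-v_n$ does not reduce this to $w_{\Omega,q}$, so Theorem A does not apply directly. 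The step ``$v=\pm w_{\Omega,q}$ by uniqueness of the positive minimizer'' is therefore false as stated. The gap is repairable: either observe that each $\sum_i\epsilon_i\,w_{\Omega_i,q}$ is isolated among solutions of \eqref{LE} (apply the component-wise sign flip $\varphi\mapsto\sum_i\epsilon_i\,\varphi\,\chi_{\Omega_i}$, which preserves solutions and the $L^1$ distance, and invoke Theorem A), or follow the paper, which in Case 2 of Proposition \ref{prop:bis} avoids the issue altogether by decomposing each $\lambda_n$ via the ``spin formula'' of \cite{BFcontro} into eigenvalues of the components $\Omega_i$ and reducing to the isolation of each $\lambda_1(\Omega_i;q)$, already established in the connected case. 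Either fix works, but as written your proof does not cover disconnected $\Omega$.
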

As a straightforward consequence of Theorem B and of the closedness of $\mathrm{Spec}(\Omega;q)$, we get the following
\begin{coro}[The second $q-$eigenvalue]
Under the assumptions of Theorem \ref{teo:main2}, if we set
\[
\lambda_2(\Omega;q):=\inf\{\lambda \in\mathrm{Spec}(\Omega;q)\, :\, \lambda>\lambda_1(\Omega;q)\},
\]
then we have 
\[
\lambda_1(\Omega;q)<\lambda_2(\Omega;q)\qquad \text{ and }\qquad \lambda_2(\Omega;q)\in\mathrm{Spec}(\Omega;q).
\]
\end{coro}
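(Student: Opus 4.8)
The plan is to deduce this purely formally from Theorem \ref{teo:main2} together with two standard facts about the $q-$spectrum for $1<q<2$: that $\mathrm{Spec}(\Omega;q)$ is a closed subset of $\mathbb{R}$, and that it is unbounded — indeed it already contains the divergent sequence of Lusternik--Schnirelmann levels $\{\lambda_{k,LS}(\Omega;q)\}_k$ defined in \eqref{CFWLS}. Both facts are collected in \cite{BF}. There is no genuine analytic obstacle here: all the work sits in Theorem A / Theorem B, and what remains is a short topological argument.

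First I would rephrase the isolatedness of $\lambda_1(\Omega;q)$ quantitatively. By Theorem \ref{teo:main2} there is $\varepsilon>0$ such that
\[
\big(\lambda_1(\Omega;q)-\varepsilon,\ \lambda_1(\Omega;q)+\varepsilon\big)\cap \mathrm{Spec}(\Omega;q)=\{\lambda_1(\Omega;q)\}.
\]
Recalling that $\lambda_1(\Omega;q)=\min\mathrm{Spec}(\Omega;q)$, this precisely says that every $\lambda\in\mathrm{Spec}(\Omega;q)$ with $\lambda>\lambda_1(\Omega;q)$ in fact satisfies $\lambda\ge \lambda_1(\Omega;q)+\varepsilon$.

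Next I would observe that the set $A:=\{\lambda\in\mathrm{Spec}(\Omega;q)\,:\,\lambda>\lambda_1(\Omega;q)\}$ is nonempty, since $\mathrm{Spec}(\Omega;q)$ is unbounded and therefore contains points above $\lambda_1(\Omega;q)+\varepsilon$. Hence $\lambda_2(\Omega;q)=\inf A$ is a well-defined real number, and by the previous step $\lambda_2(\Omega;q)\ge\lambda_1(\Omega;q)+\varepsilon>\lambda_1(\Omega;q)$, which proves the first assertion.

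Finally, to see that the infimum is attained I would take a minimizing sequence $\lambda_n\in A$ with $\lambda_n\to\lambda_2(\Omega;q)$. Since $\mathrm{Spec}(\Omega;q)$ is closed, the limit lies in $\mathrm{Spec}(\Omega;q)$; and since $\lambda_n\ge\lambda_1(\Omega;q)+\varepsilon$ for every $n$, the same holds for the limit, so $\lambda_2(\Omega;q)\in A$. In particular $\lambda_2(\Omega;q)\in\mathrm{Spec}(\Omega;q)$, as claimed. The only external inputs used are Theorem \ref{teo:main2} and the closedness and unboundedness of $\mathrm{Spec}(\Omega;q)$ from \cite{BF}; everything else is elementary.
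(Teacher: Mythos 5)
Your argument is correct and is exactly the one the paper intends: the paper states this corollary as a "straightforward consequence of Theorem B and of the closedness of $\mathrm{Spec}(\Omega;q)$" without writing out details, and your write-up simply fills in those details (nonemptiness via the unbounded Lusternik--Schnirelmann sequence, the quantitative gap from isolation, and attainment of the infimum by closedness). Nothing is missing.
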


\begin{oss}
\label{oss:assumptions}
Some comments are in order about our results:
\begin{enumerate}
\item the $C^1$ regularity of $\Omega$ is not really needed, it is placed here just for ease of presentation.  
Indeed, our result is more general, as we can allow for Lipschitz sets (see Theorems \ref{teo:A} \& \ref{teo:B} below). However, in this case, if the Lipschitz constant is too large, then the result is valid for a restricted range
\[
q_\Omega<q<2,
\]
for a limit exponent $1\le q_\Omega<2$ depending on the Lipschitz constant of $\Omega$ and degenerating to $2$ as the Lipschitz constant blows-up (actually, the distinguishing condition is slightly more refined, as it depends only on the ``interior'' angles of the sets, see the discussions in Sections \ref{sec:2} and \ref{sec:5} below);
\vskip.2cm
\item in the statement Theorem \ref{teo:main}, the precision {\it least energy solution} can be omitted when $\Omega$ is connected, since in this case the Lane-Emden equation has a unique positive solution, which is indeed the unique positive minimizer of the associated energy functional. On the contrary, when $\Omega$ has $k$ connected components, the Lane-Emden equation has multiple non-negative solutions (see Remark \ref{oss:multiple}). In this case, positive solutions not having least energy {\it are not isolated}, see Example \ref{exa:next};  
\vskip.2cm
\item finally, the assumption on the number of connected components is optimal, as shown in \cite[Theorem 3.2]{BFcontro}.
\end{enumerate}
\end{oss}

\subsection{Some comments on the case $q>2$}

Our paper is focused on the case $1<q<2$ and our proofs and results do not extend to the super-homogeneous case $2<q<2^*$. The latter is indeed slightly different and some different phenomena may occur. We wish to comment on this case, in order to give a cleaner picture.
\par
We first point out that for $q>2$ the concept of least energy solution is not well-defined, since the relevant energy functional \eqref{energy}
is now {\it unbounded from below}. Moreover, in general it is no more true that the Lane-Emden equation \eqref{LE} has a unique positive least energy solution. Indeed, there has been an extensive study about existence of multiple positive solutions for equation \eqref{LE} in the super-homogeneous regime $2<q<2^*$. We mention \cite[Theorem B]{BC}, \cite{Da1990} and \cite{Da1988}, just to name a few classical results.
\par
The most simple example of this phenomenon is given by a sufficiently thin spherical shell. It is known that for every $2<q<2^*$ there exists a radius $0<r<1$ such that on
\[
A_{r}=\{x\in\mathbb{R}^N\, :\, r<|x|<1\},
\]
any first $q-$eigenfunction (which must have constant sign) {\it is not} radial, see \cite[Proposition 1.2]{Na}. This implies that on $A_r$ 
there exists infinitely many positive solutions of \eqref{LE}, obtained by composing a solution with the group of symmetries of $A_r$. We point out that examples of multiplicity can be exhibited also in presence of a trivial topology, see for example \cite{Da1989} and \cite[Example 4.7]{BF}. But the case of the spherical shell $A_r$ has one interesting feature more: by construction, each positive solution constructed above {\it is not isolated}.
As for the first $q-$eigenvalue, it is not known whether $\lambda_{1}(A_r;q)$ is isolated or not, in this case.
\par
As observed in \cite{Er}, Theorem B holds for $2<q<2^*$ whenever $\lambda_1(\Omega;q)$ is {\it simple}, i.e. there exists a unique first $q-$eigenfunction with unit $L^q$ norm, up to the choice of the sign. However, this condition does not always hold, as exposed above. By \cite[Theorem 4.4]{DGP}, this is known to be true on sets for which a positive first $q-$eigenfunction $u$ is {\it non-degenerate}. This means that the linearized operator
\[
\varphi\mapsto -\Delta \varphi-(q-1)\,u^{q-2}\,\varphi,
\]
does not contain $0$ in its spectrum. Such a condition in general is quite difficult to be checked.
To the best of our knowledge, this has been verified only in some peculiar cases, that we list below:
\begin{itemize}
\item open bounded convex sets in $\mathbb{R}^2$;
\vskip.2cm
\item open bounded sets in $\mathbb{R}^2$, which are convex in the directions $x_1$ and $x_2$ and symmetric about the lines $\{x_1=0\}$ and $\{x_2=0\}$
\vskip.2cm
\item a ball in any dimension $N\ge 2$.
\end{itemize}
The first case is due to Lin (see \cite[Lemma 2]{Lin}), the other two cases are due to Dancer (see \cite[Theorem 5]{Da1988}) and Damascelli, Grossi and Pacella (see \cite[Theorem 4.1]{DGP}). 
\par
For a general open bounded connected set $\Omega$, the best result is due to Lin (see \cite[Lemma 3]{Lin} and also \cite[Proposition 4.3]{BF}): this asserts that there always exists an exponent $q_0=q_0(\Omega)>2$ such that $\lambda_1(\Omega;q)$ is simple (and thus isolated) for $2<q<q_0$. The example of the spherical shell $A_r$ shows that this result is optimal.

\subsection{Strategy of the proof and plan of the paper} The proof of our main result is still based on studying the linearized operator
\begin{equation}
\label{linearized}
\varphi\mapsto -\Delta \varphi-(q-1)\,w_{\Omega,q}^{q-2}\,\varphi,
\end{equation}
around the positive least energy solution \(w_{\Omega, q}\). However, with respect to the case \(2<q<2^*\), in our setting difficulties are reversed. Indeed, by means of a suitable Hardy-type inequality proved in \cite{BFR}, it is quite simple to show that \(0\) does not belong to the spectrum of the operator \eqref{linearized}. On the other hand, since \(1<q<2\) this operator has a singular potential. Thus, rigorously establishing the reduction to the linearized problem requires a careful study of the embedding 
\begin{equation}
\label{embello}
\mathcal{D}^{1,2}_0(\Omega)\hookrightarrow L^2(\Omega;w_{\Omega,q}^{q-2}).
\end{equation}
We now briefly describe the structure of the paper: in Section \ref{sec:2} we recall some definitions and properties of $C^1$ and Lipschitz sets, needed to prove our main results. 
\par
Section \ref{sec:3} deals with some properties of the positive least energy solution $w_{\Omega,q}$ of equation \eqref{LE}. In particular, by appealing to some fine estimates for the Green function of a Lipschitz set obtained in \cite{CMMR}, we obtain an estimate from below on $w_{\Omega,q}$, in terms of a suitable power of the distance from the boundary (see Corollary \ref{coro:fiesta}). 
\par
In Section \ref{sec:4} we prove an abstract version of our main result, namely that the positive least energy solution $w_{\Omega,q}$ is isolated whenever the weighted embedding \ref{embello}
is compact, see Proposition \ref{lm:convergencebis}. This is the cornerstone of Theorem A and Theorem B.
\par
In Section \ref{sec:5}, we analyze the embedding \eqref{embello} and provide some sharp sufficient conditions for this to be compact, see Proposition \ref{lm:weighted} and Corollary \ref{coro:weighted}.
\par
Finally, in Section \ref{sec:6} we join the outcomes of the previous two sections, in order to prove Theorem A and Theorem B, in a larger generality.
\par
The paper is complemented by three appendices, containing: a simple, yet crucial, pointwise inequality; a universal $L^\infty$ bound for solutions of \eqref{LE}; some properties of solutions of \eqref{LE} in convex cones, with an associated study of the embedding \eqref{embello}.

\begin{ack}
Part of this work has been done during some visits of G.\,D.\,P. to the University of Bologna and of L.\,B. \& G.\,D.\,P. to the University of Firenze. The relevant Departments of Mathematics are kindly acknowledged for their hospitality.
\end{ack}

\section{Lipschitz sets}
\label{sec:2}
For an open bounded set $\Omega\subset\mathbb{R}^N$, in what follows we denote by $d_\Omega$ the distance function from the boundary, i.e.
\[
d_\Omega(x)=\min_{y\in\partial\Omega} |x-y|,\qquad \text{ for every } x\in\Omega.
\]
In the sequel, we will need some fine comparison estimates for solutions of elliptic PDEs. These will be taken from \cite{CMMR}. In order to be consistent, we adopt the same definition of {\it Lipschitz sets} as in \cite{CMMR}.
\begin{defi}
Let $r,h>0$ be two positive numbers, then we set
\[
C(r,h)=\{x=(x',x_N)\in\mathbb{R}^N\, :\, |x'|<r\ \text{ and }\ |x_N|<h\}.
\]
An open bounded set $\Omega\subset\mathbb{R}^N$ is called {\it Lipschitz} if for any $x_0\in\partial\Omega$, there exist $r,h>0$ and a Lipschitz function $\varphi:\mathbb{R}^{N-1}\to\mathbb{R}$ such that, up to a rigid movement taking $x_0$ to the origin, we have
\[
\partial \Omega\cap C(r,h)=\{(x',x_N)\, :\, |x'|<r,\, x_N=\varphi(x_1,\dots,x_{N-1})\},
\]
and
\[
\Omega\cap C(r,h)=\{(x',x_N)\, :\, |x'|<r,\, \varphi(x_1,\dots,x_{N-1})<x_N<h\}.
\]
We call an {\it atlas for $\partial\Omega$} any finite collection of cylinders $\{C_k(r_k,h_k)\}_{1\le k\le m}$, with associated Lipschitz maps $\{\varphi_k\}_{1\le k\le m}$, which covers $\partial\Omega$. Then we define the {\it Lipschitz constant of $\Omega$} as
\begin{equation}\label{e:kappa}
\kappa_\Omega=\inf \Big(\max\{\|\nabla \varphi_k\|_{L^\infty}\, :\, 1\le k\le m\}\Big),
\end{equation}
where the infimum is taken over all atlases for $\partial\Omega$.
\end{defi}
\begin{oss}\label{oss:C1}
Similarly, as in \cite{CMMR}, we say that $\Omega$ {\it is of class $C^1$} if the functions $\varphi$ in the previous definition are of class $C^1$. Then, for $\Omega$ of class $C^1$ we have
\[
\kappa_\Omega=0,
\]
see \cite[equation (3.25)]{CMMR}.
\end{oss}

\begin{defi}[Cones]
\label{defi:cones}
For $0\le \beta<1$, we consider the spherical cap
\[
\mathcal{S}(\beta)=\{\omega\in\mathbb{S}^{N-1}\,:\, \beta<\langle \omega,\mathbf{e}_1\rangle\},
\]
where $\mathbf{e}_1=(1,0,\dots,0)$.
We also consider the axially symmetric cone
\[
\Gamma(\beta,R)=\left\{x\in\mathbb{R}^N\, :\, 0<|x|<R \text{ and } \frac{x}{|x|}\in\mathcal{S}(\beta) \right\}.
\]
We indicate by $\lambda(\mathcal{S}(\beta))$ the first Dirichlet eigenvalue of the Laplace-Beltrami operator
on $\mathcal{S}(\beta)$. Then we define $\alpha(\beta)$ to be the positive root of the equation
\[
\alpha(\beta)\,(N-2+\alpha(\beta))=\lambda(\mathcal{S}(\beta)),
\]
i.e.
\begin{equation}
\label{alpha}
\alpha(\beta)=\frac{\sqrt{(N-2)^2+4\,\lambda(\mathcal{S}(\beta))}-(N-2)}{2}>0.
\end{equation}
\end{defi}

Note that the map \(\beta \to \lambda(\mathcal{S}(\beta))\) is  increasing  and thus so it is the map \(\beta \to \alpha(\beta)\). Furthermore, we have $\lambda(\mathcal{S}(0))=N-1$ and an associated eigenfunction is given by $\varphi(x)=x_1$. Correspondingly, we obtain
\[
\alpha(0)=1.
\]
More generally, we have 
\begin{equation}\label{e:asymptotics}
\begin{aligned}
\lambda(\mathcal{S}(\beta))&\searrow N-1  & \text{ and } \ &\alpha(\beta)\searrow 1,  &&\text{as } \beta\searrow 0,
\\
\lambda(\mathcal{S}(\beta))&\nearrow +\infty & \text{ and }\ &\alpha(\beta)\nearrow +\infty, &&\text{as } \beta\nearrow 1. 
\end{aligned}
\end{equation}
In the case $N=2$, it is easily seen that $\lambda(\mathcal{S}(\beta))$ is the first Dirichlet eigenvalue of the operator $\varphi\mapsto -\varphi''$ on the interval
\[
(-\arccos\beta,\arccos\beta).
\]
Thus, in this case we have
\begin{equation}
\label{cono2}
\lambda(\mathcal{S}(\beta))=\left(\frac{\pi}{2\,\arccos\beta}\right)^2\qquad\text{and}\qquad \alpha(\beta)=\frac{\pi}{2\,\arccos\beta}
\end{equation}

Let us now precisely state the inner cone condition that will be used throughout the paper.
\begin{defi}
\label{defi:openingindex}
Let $\Omega\subset\mathbb{R}^N$ be an open bounded set. For a given $R>0$, we indicate 
\[
\Omega_R=\{x\in\Omega\, :\, d_\Omega(x)> R\}.
\]
We say that $\Omega$ satisfies an {\it inner cone condition of index $\beta\in (0,1)$}
if there exists $0<R<\mathrm{diam\,}(\Omega)$ such that 
\begin{equation}
\label{cone}
\forall x\in \overline{\Omega}\setminus \Omega_R\ \text{ $\exists$ an isometry $\mathcal{O}$ of $\mathbb{R}^N$ such that  $\mathcal{O}(x)=0$ and }\ x+\mathcal{O}\,(\Gamma(\beta,R))\subset \Omega.
\end{equation}
\end{defi}
We also define the homogeneity index of a set \(\Omega\) as follows
\begin{defi}
Let $\Omega\subset\mathbb{R}^N$ be an open bounded  set. We define its {\it cone index} as
\[
\beta_\Omega=\inf\bigl\{ \beta\in (0,1)\,:\,   \text{$\Omega$ satisfies an inner cone condition of index $\beta\in (0,1)$}\bigr\},
\]
and its \emph{homogeneity index} as 
\[
\alpha_\Omega=\inf\bigl\{ \alpha(\beta)\,:\,   \beta \ge \beta_{\Omega} \}=\alpha(\beta_\Omega).
\]
\end{defi}

\begin{oss}
It is a classical fact that if \(\Omega\) is an open bounded Lipschitz set, then it satisfies  an inner cone condition with  index $\beta$, for all \(\beta\) such that 
\[
\frac{\kappa_\Omega}{\sqrt{1+\kappa_\Omega^2}}<\beta<1.
\]
Here  $\kappa_\Omega$  is the Lipschitz constant defined in \eqref{e:kappa}. Hence 
\begin{equation}\label{e:upperbound}
\beta_\Omega \le \frac{\kappa_\Omega}{\sqrt{1+\kappa_\Omega^2}}\qquad \text{and}\qquad \alpha_\Omega<+\infty.
\end{equation}
In particular, by Remark  \ref{oss:C1} and \eqref{e:asymptotics}, one has that 
\begin{equation}\label{e:C1}
\text{\(\Omega\)  of class \(C^1\)} \qquad\Longrightarrow \qquad \beta_\Omega=0\quad \text{ and }\quad \alpha_\Omega=1.
\end{equation}
Note however that for a Lipschitz set the inequality in \eqref{e:upperbound} can be strict. This is the case, for example, when the set have ``concave'' corners, see Figure \ref{fig:1}. 
\end{oss}
\begin{figure}
\includegraphics[scale=.4]{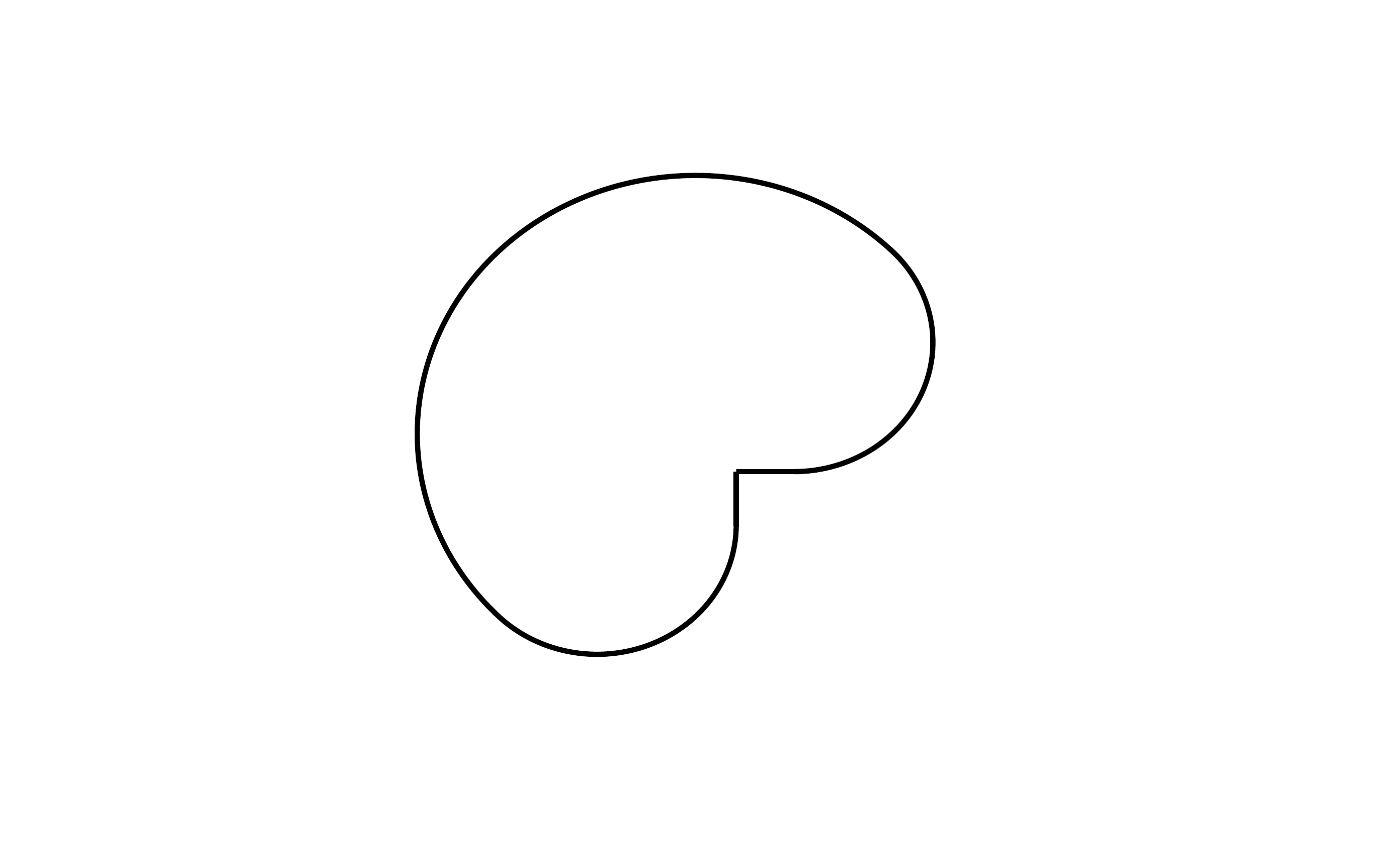}
\caption{A set with Lipschitz constant $\kappa_\Omega=1$, which satisfies $\beta_\Omega=0$.}
\label{fig:1}
\end{figure}

\section{Least energy solutions}
\label{sec:3}

\begin{defi}
\label{defi:boundeddensity}
Let $\Omega\subset\mathbb{R}^N$ be an open bounded set. For $1< q<2$, we define $w_{\Omega,q}$ to be the unique solution of 
\[
\min_{\varphi\in \mathcal{D}^{1,2}_0(\Omega)}\left\{\frac{1}{2}\,\int_\Omega |\nabla \varphi|^2\,dx-\frac{1}{q}\,\int_\Omega \varphi^q\,dx\, :\, \varphi\ge 0 \text{ a.\,e. in }\Omega\right\}.
\]
Existence follows by using the Direct Methods in the Calculus of Variations, while for uniqueness we refer for example to \cite[Lemma 2.2]{BFR}.
\end{defi}
\begin{oss}
\label{oss:multiple}
The function $w_{\Omega,q}$ is the unique non-negative solution of 
\[
\min_{\varphi\in \mathcal{D}^{1,2}_0(\Omega)}\left\{\frac{1}{2}\,\int_\Omega |\nabla \varphi|^2\,dx-\frac{1}{q}\,\int_\Omega |\varphi|^q\,dx\right\},
\]
as well. Consequently, when $\Omega\subset\mathbb{R}^N$ is connected, it is the unique non-negative solution of the Lane-Emden equation \eqref{LE}.
Then, it is not difficult to see that
\[
\lambda_1(\Omega;q)=\|w_{\Omega,q}\|_{L^q(\Omega)}^{q-2},
\]
and the rescaled function
\[
u=\frac{w_{\Omega,q}}{\|w_{\Omega,q}\|_{L^q(\Omega)}},
\]
is the (unique) first positive $q-$eigenfunction of $\Omega$, with unit $L^q$ norm.
\par
On the contrary, when $\Omega$ is disconnected, equation \eqref{LE} has many non-negative solutions. More precisely, if $\Omega$ has $k$ connected components $\Omega_1,\dots,\Omega_k$, then \eqref{LE} has exactly $2^k-1$ positive solutions, given by
\[
\delta_1\,w_{\Omega_1,q}+\dots+\delta_k\,w_{\Omega_k,q},
\]
where each $\delta_i\in\{0,1\}$ and they are not all equal to $0$. In this case, the function $w_{\Omega,q}$ is given by
\begin{equation}
\label{sommine}
w_{\Omega,q}=\sum_{i=1}^k w_{\Omega_i,q},
\end{equation}
and it gives the positive least energy solution of \eqref{LE}.
Accordingly, in this case the first $q-$eigenvalue of $\Omega$ is given by (see \cite[Corollary 2.6]{BFcontro})
\[
\lambda_1(\Omega;q)=\left[\sum_{i=1}^k \left(\frac{1}{\lambda_1(\Omega_i;q)}\right)^\frac{q}{2-q}\right]^\frac{q-2}{q},
\]
and it is not simple.
\end{oss}
The following Hardy-type inequality is a particular case of a general result proved in \cite{BFR}. This simpler result is enough for our purposes and it is obtained by taking $\delta=1$ in \cite[Theorem 3.1 \& Corollary 3.4]{BFR}.
\begin{prop}[Hardy-Lane-Emden inequality]
\label{lm:HLE}
Let $1<q<2$ and let $\Omega\subset\mathbb{R}^N$ be an open bounded set. Then for every $\varphi\in \mathcal{D}^{1,2}_0(\Omega)$, we have
\[
\int_\Omega w_{\Omega,q}^{q-2}\,|\varphi|^2\,dx\le \int_\Omega |\nabla \varphi|^2\,dx.
\]
In particular, the embedding 
\[
\mathcal{D}^{1,2}_0(\Omega)\hookrightarrow L^2(\Omega; w_{\Omega,q}^{q-2}),
\]
is continuous. 
\end{prop}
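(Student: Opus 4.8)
The plan is to prove the inequality first for $\varphi\in C^\infty_0(\Omega)$ via a \emph{ground state substitution} (equivalently, Picone's inequality), and then to reach a general $\varphi\in\mathcal{D}^{1,2}_0(\Omega)$ by density together with Fatou's lemma. Recall that $w=w_{\Omega,q}$ is a positive weak solution of the Euler--Lagrange equation $-\Delta w=w^{q-1}$ in $\Omega$; since $w\in L^\infty(\Omega)$, the right-hand side lies in $L^\infty(\Omega)$ and, by De Giorgi--Nash--Moser, $w$ is continuous in $\Omega$, while the strong maximum principle gives $w>0$ in $\Omega$. In particular $\inf_K w>0$ on every compact $K\subset\Omega$, and the weak formulation of the equation holds against every test function in $\mathcal{D}^{1,2}_0(\Omega)$, because $w^{q-1}\in L^\infty(\Omega)\subset L^2(\Omega)$.

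Fix $\varphi\in C^\infty_0(\Omega)$; replacing $\varphi$ by $|\varphi|$ does not change either integral, so we may assume $\varphi\ge 0$, and we set $\psi=\varphi/w$, a bounded function supported in a compact subset of $\Omega$, so that $\varphi^2/w=w\,\psi^2\in\mathcal{D}^{1,2}_0(\Omega)$ is an admissible test function. Testing $-\Delta w=w^{q-1}$ against $\varphi^2/w$ yields
\[
\int_\Omega\nabla w\cdot\nabla\!\left(\frac{\varphi^2}{w}\right)dx=\int_\Omega w^{q-1}\,\frac{\varphi^2}{w}\,dx=\int_\Omega w^{q-2}\,\varphi^2\,dx,
\]
while expanding $\nabla\varphi=\psi\,\nabla w+w\,\nabla\psi$ and $\nabla(\varphi^2/w)=\psi^2\,\nabla w+2\,w\,\psi\,\nabla\psi$ gives the pointwise identity
\[
|\nabla\varphi|^2=\nabla w\cdot\nabla\!\left(\frac{\varphi^2}{w}\right)+w^2\,|\nabla\psi|^2,
\]
which is the equality case of Picone's inequality. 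Combining the two displays and discarding the nonnegative term $\int_\Omega w^2|\nabla\psi|^2\,dx$ proves the inequality for $\varphi\in C^\infty_0(\Omega)$.

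Finally, given an arbitrary $\varphi\in\mathcal{D}^{1,2}_0(\Omega)$, choose $\varphi_n\in C^\infty_0(\Omega)$ with $\varphi_n\to\varphi$ in $\mathcal{D}^{1,2}_0(\Omega)$; by the Poincar\'e inequality \eqref{poinpoin} we also have $\varphi_n\to\varphi$ in $L^2(\Omega)$, hence, up to a subsequence, $\varphi_n\to\varphi$ a.e.\ in $\Omega$. Applying the inequality already proved to $\varphi_n$ and letting $n\to\infty$, Fatou's lemma on the left-hand side and convergence of the Dirichlet energies on the right-hand side give
\[
\int_\Omega w^{q-2}\,\varphi^2\,dx\le\liminf_{n\to\infty}\int_\Omega w^{q-2}\,\varphi_n^2\,dx\le\liminf_{n\to\infty}\int_\Omega|\nabla\varphi_n|^2\,dx=\int_\Omega|\nabla\varphi|^2\,dx,
\]
and the continuity of the embedding $\mathcal{D}^{1,2}_0(\Omega)\hookrightarrow L^2(\Omega;w_{\Omega,q}^{q-2})$ follows at once. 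The only points requiring care are the positivity and interior regularity of $w$ and the admissibility of $\varphi^2/w$ as a test function; these are confined to the interior of $\Omega$, since the behavior of $w$ near $\partial\Omega$ (where $w\to 0$, so that $\psi=\varphi/w$ would be ill-behaved) is completely bypassed by the compact support of $\varphi$ and the ensuing density argument — which is also why no regularity of $\Omega$ is needed.
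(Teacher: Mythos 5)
Your proof is correct. Note that the paper itself contains no proof of this proposition: it is imported from \cite[Theorem 3.1 \& Corollary 3.4]{BFR} (the case $\delta=1$), and the argument behind that reference is precisely the ground-state substitution/Picone computation you reconstruct, so your route is essentially the one the citation relies on, just made self-contained. Two minor points to record for completeness: before invoking the strong minimum principle you should observe that $w_{\Omega,q}$ is nontrivial on \emph{every} connected component of $\Omega$ (for $1<q<2$ the energy can be made strictly negative on each component; this is also recorded in Remark \ref{oss:multiple}, where $w_{\Omega,q}=\sum_i w_{\Omega_i,q}$), which is what guarantees $w_{\Omega,q}>0$ in all of $\Omega$ and hence that the weight $w_{\Omega,q}^{q-2}$ is finite a.e.; and $|\varphi|$ is only Lipschitz, not in $C^\infty_0(\Omega)$, but your computation only uses that it is a bounded, compactly supported $W^{1,2}$ function (alternatively, the Picone identity $|\nabla\varphi|^2=\nabla w\cdot\nabla(\varphi^2/w)+w^2\,|\nabla(\varphi/w)|^2$ holds for sign-changing $\varphi$ as well, so the reduction to $\varphi\ge 0$ can simply be skipped).
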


Given an open set $\Omega\subset\mathbb{R}^N$, we denote by $\mathcal{G}_\Omega$ the Green function for the Dirichlet-Laplacian on $\Omega$. We also recall the notation
\[
\Omega_R=\{x\in\Omega\, :\, d_\Omega(x)> R\}.
\]
The following result is contained in \cite[Proposition 3.6]{CMMR}. A similar result, under slightly stronger assumptions and with a worse control on the constants, was previously obtained in \cite{MS}.
\begin{prop}
Let $\Omega\subset\mathbb{R}^N$ be an open bounded Lipschitz set which satisfies an inner cone condition of index \(\beta\in (0,1)\). Let \(R>0\) be such that  \eqref{cone} hold true. Then, for \(\alpha=\alpha(\beta)\) defined in \eqref{alpha}, we have
\begin{equation}
\label{salvaculo}
\mathcal{G}_\Omega(x,y)\ge \frac{1}{C}\,d_\Omega(x)^{\alpha},\qquad \text{ for every } x\in \Omega \text{ and } y\in \Omega_R,
\end{equation}
where the constant $C>0$ depends on $N, \Omega$ and \(R\) only.
\end{prop}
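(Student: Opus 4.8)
The plan is to obtain the lower bound on the Green function by combining the classical representation of $w_{\Omega,q}$ (equivalently, of a subsolution) in terms of $\mathcal{G}_\Omega$ with the a priori lower bound on $\mathcal{G}_\Omega$ away from the boundary, which is a standard consequence of the Harnack inequality on the connected pieces of $\Omega$; and then to transfer the boundary decay estimate using the inner cone condition and the explicit subsolution $d_\Omega(x)^{\alpha}$ on cones. First I would fix $y\in\Omega_R$ and recall the well-known fact that, since $\Omega$ is Lipschitz and $y$ is at distance at least $R$ from $\partial\Omega$, by interior Harnack chains one has $\mathcal{G}_\Omega(x,y)\ge 1/C$ for all $x$ with $d_\Omega(x)\ge R/2$, say, where $C=C(N,\Omega,R)$; this handles the ``interior'' region where $d_\Omega(x)^\alpha$ is bounded above and below by constants.

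The core of the argument is the boundary strip $\{x\in\Omega:\,0<d_\Omega(x)<R/2\}$. Here, for each such $x$ pick $x_0\in\partial\Omega$ realizing the distance, and use the inner cone condition \eqref{cone} at $x_0$: after the isometry $\mathcal{O}$, the set $\Omega$ contains a translated cone $x_0+\mathcal{O}(\Gamma(\beta,R))$ with apex at $x_0$, and $x$ lies on the axis of this cone at distance $d_\Omega(x)$ from the apex. On the cone $\Gamma(\beta,R)$ the function $\Phi(z)=|z|^{\alpha}\,\psi(z/|z|)$, with $\psi>0$ the first Dirichlet eigenfunction of $\mathcal{S}(\beta)$ normalized suitably, is harmonic and vanishes on the lateral boundary; by the definition of $\alpha=\alpha(\beta)$ in \eqref{alpha} this is exactly the separated-variables harmonic function, and along the axis $\psi$ is bounded below, so $\Phi(z)\gtrsim |z|^\alpha$ there. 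Comparing $\Phi$ (suitably scaled down) with $\mathcal{G}_\Omega(\cdot,y)$ on the cone: both are harmonic in $\Gamma(\beta,R)\setminus\{0\}$ away from $y$ (note $y\notin\Gamma(\beta,R)$ as $d_\Omega(y)>R$ while $\Gamma(\beta,R)$ has radius $R$), $\mathcal{G}_\Omega(\cdot,y)\ge 0$ on $\partial\Omega\cap\partial(x_0+\mathcal{O}\Gamma(\beta,R))$, and on the spherical part $\{|z-x_0|=R\}$ one has $\mathcal{G}_\Omega(\cdot,y)\ge 1/C$ by the interior estimate of the previous step while $\Phi$ is bounded there; so the maximum principle gives $\mathcal{G}_\Omega(x,y)\ge \Phi(x-x_0)/C\ge d_\Omega(x)^\alpha/C$.

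The main obstacle I anticipate is bookkeeping the geometry near $\partial\Omega$: one must ensure the comparison domain (the translated cone intersected with $\Omega$, or just the cone itself) is contained in $\Omega$, that $y$ stays outside it, and that the constant in the interior Harnack bound on $\{|z-x_0|=R\}$ is uniform in $x_0\in\partial\Omega$ — the latter uses compactness of $\partial\Omega$ together with the fact that $\Omega_R$ meets every connected component (which one should check, shrinking $R$ if necessary, since otherwise $\mathcal{G}_\Omega(\cdot,y)$ vanishes identically on components not containing $y$, and the stated inequality would fail; this is presumably why $R<\mathrm{diam}(\Omega)$ and why a finite number of components, though for this proposition the hypotheses as stated suffice on the component containing $y$). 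A minor additional point is the sign/normalization of $\Phi$ and absorbing the eigenfunction factor $\psi$ into the constant; this is routine. Everything else is the standard maximum-principle comparison, so the estimate follows with $C=C(N,\Omega,R)$.
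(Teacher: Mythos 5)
A preliminary remark: the paper does not prove this proposition at all — it is quoted from \cite[Proposition 3.6]{CMMR} — so your sketch can only be measured against what a complete proof of the \emph{sharp} bound \eqref{salvaculo} requires, and as written it has two genuine gaps. The first is the sentence ``$x$ lies on the axis of this cone at distance $d_\Omega(x)$ from the apex''. The hypothesis \eqref{cone} provides, at each point of $\overline{\Omega}\setminus\Omega_R$ (in particular at the nearest boundary point $x_0$), a cone with apex at that point pointing in \emph{some} direction chosen by the hypothesis; nothing ties that direction to $(x-x_0)/|x-x_0|$. In general $x$ may sit near the lateral boundary of the cone at $x_0$, where the separated harmonic function $|z|^{\alpha}\,\psi(z/|z|)$ is small, or outside that cone altogether, so evaluating your barrier at $x$ does not return $d_\Omega(x)^{\alpha}$. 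The natural repair — work with the cone whose apex is at $x$ itself (or shifted behind $x$ by a fraction of $d_\Omega(x)$) and compare inside a slightly \emph{narrower} coaxial cone, whose spherical cap then lies at distance $\gtrsim R$ from $\partial\Omega$ so that your interior Harnack bound applies on all of it — does close, but it produces the exponent $\alpha(\beta')$ for apertures $\beta'>\beta$, i.e. exactly the weaker statement \eqref{e:culo} (any $\alpha>\alpha(\beta)$, with constants degenerating as $\alpha\to\alpha(\beta)$), not \eqref{salvaculo} with the exponent $\alpha(\beta)$ itself.

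The second gap is in the maximum-principle comparison: on the spherical part of the boundary of $x_0+\mathcal{O}\,(\Gamma(\beta,R))$ you invoke $\mathcal{G}_\Omega(\cdot,y)\ge 1/C$, but the interior estimate gives this only on the central portion of the cap; near the rim the points of the cap can be arbitrarily close to $\partial\Omega$, where you only know $\mathcal{G}_\Omega(\cdot,y)>0$, while the barrier there is comparable to $R^{\alpha}$ times the distance to the lateral boundary. Hence the inequality ``$\mathcal{G}_\Omega(\cdot,y)\ge c\,\Phi$ on the comparison boundary'' is not verified where it matters. This is not a cosmetic point: a crude Harnack chain inside the cone only yields $\mathcal{G}_\Omega(\cdot,y)\gtrsim (\mathrm{dist})^{M}$ near the rim, with an exponent $M$ coming from the Harnack constant and in general larger than $1$, so the linear lower bound needed to dominate $\psi$ is unavailable; obtaining the sharp rate requires a boundary Harnack or eigenfunction-expansion argument in the truncated cone, which is precisely the nontrivial content of the cited result, or else one again retreats to a narrower cone and loses the exact exponent. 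Your interior step (Harnack chains, uniform in $y\in\Omega_R$) and the connectedness caveat are fine — the statement does implicitly require $\Omega$ (or at least the component containing $y$) to be connected, and for the paper's use in Corollary \ref{coro:fiesta} the weaker bound \eqref{e:culo} obtained from the repaired argument would in fact suffice — but as a proof of \eqref{salvaculo} as stated, the two points above are missing.
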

From the previous result, we immediately get the following one. This is an essential ingredient for the proof of our main result.
\begin{coro}
\label{coro:fiesta}
Let $\Omega\subset\mathbb{R}^N$ be an open bounded Lipschitz set, with homogeneity index $\alpha_\Omega$.
For $1<q<2$, let \(w_{\Omega,q}\) be the function in Definition \ref{defi:boundeddensity}.
Then, for all \(\alpha>\alpha_\Omega\)  we have
\begin{equation}\label{e:culo}
w_{\Omega,q}(x)\ge \frac{1}{\mathcal{C}}\,d_\Omega(x)^{\alpha},\qquad \text{ for every } x\in\Omega,
\end{equation}
where the constant $\mathcal{C}>0$ depends on $N, q, \Omega$ and \(\alpha\) (and it might  degenerate as $\alpha$ converges to $\alpha_\Omega$).
\end{coro}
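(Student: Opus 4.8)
The plan is to deduce \eqref{e:culo} from the Green-function lower bound \eqref{salvaculo} together with the representation of $w_{\Omega,q}$ as a Green potential. Fix $\alpha>\alpha_\Omega$. By definition of $\alpha_\Omega=\alpha(\beta_\Omega)$ and the monotonicity and continuity of $\beta\mapsto\alpha(\beta)$, there exists $\beta\in(\beta_\Omega,1)$ with $\alpha(\beta)<\alpha$; and since $\beta>\beta_\Omega$, the set $\Omega$ satisfies an inner cone condition of index $\beta$, say with radius $R>0$. Applying the previous Proposition with this $\beta$, we obtain a constant $C>0$ so that $\mathcal{G}_\Omega(x,y)\ge C^{-1} d_\Omega(x)^{\alpha(\beta)}$ for all $x\in\Omega$ and $y\in\Omega_R$; since $\alpha(\beta)<\alpha$ and $d_\Omega\le\mathrm{diam}(\Omega)$ is bounded, we may absorb the difference in the constant and write $\mathcal{G}_\Omega(x,y)\ge \mathcal{C}_0^{-1} d_\Omega(x)^{\alpha}$ on $\Omega\times\Omega_R$ (possibly enlarging $\mathcal{C}_0$, which is where the degeneration as $\alpha\to\alpha_\Omega$ enters, through $C=C(N,\Omega,R)$ and $R=R(\beta)$).

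The second ingredient is the identity
\[
w_{\Omega,q}(x)=\int_\Omega \mathcal{G}_\Omega(x,y)\,w_{\Omega,q}(y)^{q-1}\,dy,\qquad x\in\Omega,
\]
which holds because $w_{\Omega,q}\in\mathcal{D}^{1,2}_0(\Omega)$ solves $-\Delta w_{\Omega,q}=w_{\Omega,q}^{q-1}$ in $\Omega$ with the right-hand side in, say, $L^\infty(\Omega)$ by the universal bound recalled in the appendix (and $w_{\Omega,q}\ge 0$, not identically zero, by Definition \ref{defi:boundeddensity} and the strong maximum principle on each connected component). Restricting the integral to $\Omega_R$ and using the Green-function bound just obtained,
\[
w_{\Omega,q}(x)\ \ge\ \int_{\Omega_R}\mathcal{G}_\Omega(x,y)\,w_{\Omega,q}(y)^{q-1}\,dy\ \ge\ \frac{d_\Omega(x)^{\alpha}}{\mathcal{C}_0}\int_{\Omega_R} w_{\Omega,q}(y)^{q-1}\,dy,
\]
so it remains to check that $m_R:=\int_{\Omega_R} w_{\Omega,q}^{q-1}\,dy>0$. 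For this one first fixes $R$ small enough that $\Omega_R\neq\emptyset$ — note the inner cone condition may be used with any radius $R'\le R$, in particular one small enough that $\Omega_{R'}$ meets every connected component of $\Omega$ — and then observes that $w_{\Omega,q}>0$ on $\Omega_R$ by the strong maximum principle (applied on each component, using $\eqref{sommine}$ and the fact that every $w_{\Omega_i,q}$ is positive in $\Omega_i$). Hence $m_R>0$, and setting $\mathcal{C}=\mathcal{C}_0/m_R$ gives \eqref{e:culo} with a constant depending only on $N,q,\Omega$ and $\alpha$.

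I expect the only genuinely delicate point to be the bookkeeping on the radius: the Proposition is stated for a fixed $R$ realizing the cone condition, while the potential-theoretic lower bound needs $\Omega_R$ to be a nonempty set of positive measure lying in the "good" region, and one wants the final constant to degenerate only as $\alpha\to\alpha_\Omega$ (i.e.\ as $\beta\to\beta_\Omega$), not for extraneous reasons. This is handled by first choosing $\beta\in(\beta_\Omega,1)$ with $\alpha(\beta)<\alpha$, then invoking the cone condition of index $\beta$ with \emph{some} admissible radius, and finally shrinking that radius if necessary (an inner cone condition of index $\beta$ and radius $R$ trivially implies the same with any smaller radius) so that $\Omega_R$ is nonempty; all constants then depend only on $N,q,\Omega,\alpha$. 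The rest is the standard Green representation plus the strong maximum principle, both of which are classical for Lipschitz domains.
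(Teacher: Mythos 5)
Your argument is correct and follows essentially the same route as the paper: Green representation of $w_{\Omega,q}$, the lower bound \eqref{salvaculo} for a cone index $\beta>\beta_\Omega$ (you take $\alpha(\beta)<\alpha$ and absorb the exponent gap using $d_\Omega\le \mathrm{diam}(\Omega)$, the paper takes $\alpha(\beta)=\alpha$ exactly), and positivity of $w_{\Omega,q}$ on $\Omega_R$ by the minimum principle. The only difference is cosmetic: the paper adds a bootstrap exploiting the sublinearity ($q<2$) to make $\min_{\Omega_R} w_{\Omega,q}$, and hence $\mathcal{C}$, explicit, whereas your qualitative bound $\int_{\Omega_R}w_{\Omega,q}^{q-1}\,dy>0$ already yields a constant depending only on $N,q,\Omega,\alpha$, which is all the statement requires.
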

\begin{proof}
The function $w_{\Omega,q}$ solves
\[
-\Delta w_{q,\Omega}=w_{\Omega,q}^{q-1},\qquad \text{ in } \Omega,
\]
with homogeneous Dirichlet conditions.
Thus, by the representation formula for Poisson's equation, we have
\[
w_{\Omega,q}(x)=\int_\Omega \mathcal{G}_\Omega(x,y)\,w_{\Omega,q}(y)^{q-1}\,dy,\qquad \text{ for } x\in\Omega.
\]
We fix \(\alpha >\alpha_\Omega\) and we let \(\beta>\beta_\Omega\) be such that \(\alpha=\alpha(\beta)\). By definition of cone index $\beta_\Omega$, we have that \(\Omega\) satisfies an inner cone condition of index \(\beta\). We can thus  use  \eqref{salvaculo} to infer
\begin{equation}
\label{struttura}
\begin{split}
w_{\Omega,q}(x)\ge \int_{\Omega_R}\mathcal{G}_\Omega(x,y)\,w_{\Omega,q}(y)^{q-1}\,dy&\ge \frac{1}{C}\, \int_{\Omega_R} d_\Omega(x)^{\alpha}\,w_{\Omega,q}(y)^{q-1}\,dy\\
& \ge\frac{1}{C}\,\left(\min_{\Omega_R} w_{\Omega,q}\right)^{q-1}\,|\Omega_R|\,d_\Omega(x)^{\alpha}.
\end{split}
\end{equation}
Observe that $\min_{\Omega_R} w_{\Omega,q}>0$, by the minimum principle. We can now use the lack of homogeneity of the equation to improve the previous estimate: by passing to the minimum over $\Omega_R$ in \eqref{struttura}, we get
\[
\min_{\Omega_R} w_{\Omega,q}\ge \frac{1}{C}\,\left(\min_{\Omega_R} w_{\Omega,q}\right)^{q-1}\,|\Omega_R|\,R^{\alpha},
\]
that is
\[
\min_{\Omega_R} w_{\Omega,q}\ge \left(\frac{1}{C}\,|\Omega_R|\,R^{\alpha}\right)^\frac{1}{2-q}.
\]
By spending this information in \eqref{struttura}, we finally get
\[
w_{\Omega,q}(x)\ge \frac{1}{C}\,\left(\frac{1}{C}\,|\Omega_R|\,R^{\alpha}\right)^\frac{q-1}{2-q}\,|\Omega_R|\,d_\Omega(x)^{\alpha},
\]
as desired.
\end{proof}

\section{An abstract result}
\label{sec:4}

The following result is the key ingredient for the proof of our main result. 
\begin{prop}
\label{lm:convergencebis}
Let $1<q<2$ and let $\Omega\subset\mathbb{R}^N$ be an open bounded set.
Let us assume that 
\begin{equation}
\label{ipotesicroce}
\text{the embedding }\mathcal{D}^{1,2}_0(\Omega)\hookrightarrow L^2(\Omega; w_{\Omega,q}^{q-2}) \text{ is compact}.
\end{equation}
Then there exists $\delta>0$ such that the neighborhood
\[
\mathcal{I}_\delta(w_{\Omega,q})=\Big\{\varphi\in \mathcal{D}^{1,2}_0(\Omega)\, :\, \|\varphi-w_{\Omega,q}\|_{L^1(\Omega)}<\delta\Big\},
\]
does not contain any solution of the Lane-Emden equation \eqref{LE}.
\end{prop}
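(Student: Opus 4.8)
The plan is to argue by contradiction. Suppose there is a sequence $(u_n)_n$ of solutions of \eqref{LE}, all different from $w:=w_{\Omega,q}$, with $\|u_n-w\|_{L^1(\Omega)}\to 0$; I will derive a contradiction from the compactness assumption \eqref{ipotesicroce}. The first step is to promote the $L^1$ convergence to strong convergence in $\mathcal{D}^{1,2}_0(\Omega)$. By the universal $L^\infty$ bound for solutions of \eqref{LE} (proved in the appendix) the functions $u_n$ are uniformly bounded, hence $u_n\to w$ in $L^p(\Omega)$ for every $p\in[1,\infty)$. Testing the equation for $u_n$ with $u_n$ gives $\int_\Omega|\nabla u_n|^2\,dx=\int_\Omega|u_n|^q\,dx$, so $(u_n)_n$ is bounded in $\mathcal{D}^{1,2}_0(\Omega)$ and $\int_\Omega|\nabla u_n|^2\,dx\to\int_\Omega w^q\,dx=\int_\Omega|\nabla w|^2\,dx$. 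Being bounded in $\mathcal{D}^{1,2}_0(\Omega)$ and convergent to $w$ in $L^2(\Omega)$, the sequence satisfies $u_n\rightharpoonup w$ in $\mathcal{D}^{1,2}_0(\Omega)$, and weak convergence together with convergence of the norms yields $u_n\to w$ strongly in $\mathcal{D}^{1,2}_0(\Omega)$.

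Next, set $v_n=u_n-w\neq 0$, $t_n=\big(\int_\Omega|\nabla v_n|^2\,dx\big)^{1/2}\to 0^+$ and $\phi_n=v_n/t_n$, so that $\int_\Omega|\nabla\phi_n|^2\,dx=1$. Up to subsequences, $\phi_n\rightharpoonup\phi$ in $\mathcal{D}^{1,2}_0(\Omega)$; by Rellich $\phi_n\to\phi$ in $L^2(\Omega)$, by \eqref{ipotesicroce} $\phi_n\to\phi$ in $L^2(\Omega;w^{q-2})$, and (after a further subsequence) $\phi_n\to\phi$ and $v_n\to 0$ a.e.; moreover $w^{q-2}\phi_n^2\to w^{q-2}\phi^2$ in $L^1(\Omega)$, by Cauchy--Schwarz and Proposition~\ref{lm:HLE}. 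The key identity is obtained by subtracting the weak formulations of the equations for $u_n$ and $w$, testing with $v_n$, and using the pointwise (in the integration variable) identity $|u_n|^{q-2}u_n-w^{q-1}=\Psi_n\,v_n$, where
\[
\Psi_n:=(q-1)\int_0^1\big|w+t\,v_n\big|^{q-2}\,dt\ge 0
\]
(a finite integral since $q-2>-1$): this gives $\int_\Omega\Psi_n\,v_n^2\,dx=\int_\Omega|\nabla v_n|^2\,dx=t_n^2$, that is $\int_\Omega\Psi_n\,\phi_n^2\,dx=1$. Everything then reduces to showing that $\int_\Omega\Psi_n\,\phi_n^2\,dx\to(q-1)\int_\Omega w^{q-2}\,\phi^2\,dx$.

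To this end I would split $\Omega=G_n\cup G_n^c$ with $G_n=\{|v_n|\le w/2\}$. On $G_n$ one has $|w+t\,v_n|\ge w/2$ for all $t\in[0,1]$, so $0\le\Psi_n\mathbf{1}_{G_n}\le(q-1)\,2^{2-q}\,w^{q-2}$ and $\Psi_n\mathbf{1}_{G_n}\to (q-1)\,w^{q-2}$ a.e.; since $\phi_n^2\to\phi^2$ in $L^1(\Omega;w^{q-2})$, a generalized dominated convergence argument against the measure $w^{q-2}\,dx$ gives $\int_{G_n}\Psi_n\,\phi_n^2\,dx\to(q-1)\int_\Omega w^{q-2}\,\phi^2\,dx$. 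On the bad set $G_n^c=\{|v_n|>w/2\}$ the elementary bounds $|u_n|\le w+|v_n|\le 3|v_n|$ and $w\le 2|v_n|$ yield $\big||u_n|^{q-2}u_n-w^{q-1}\big|\le C_q|v_n|^{q-1}$ (this is where the pointwise inequality of the appendix enters), hence $\Psi_n\,v_n^2\le C_q|v_n|^q$ there; dividing by $t_n^2$ and using once more $|v_n|>w/2$ together with $q-2<0$, we obtain $\Psi_n\,\phi_n^2\le C_q\,2^{2-q}\,w^{q-2}\,\phi_n^2$ on $G_n^c$, so
\[
\int_{G_n^c}\Psi_n\,\phi_n^2\,dx\le C_q\,2^{2-q}\int_\Omega\mathbf{1}_{G_n^c}\,w^{q-2}\,\phi_n^2\,dx\longrightarrow 0,
\]
because $\mathbf{1}_{G_n^c}\to 0$ a.e.\ while $w^{q-2}\phi_n^2\to w^{q-2}\phi^2$ in $L^1(\Omega)$. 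Combining the two contributions gives $(q-1)\int_\Omega w^{q-2}\,\phi^2\,dx=1$, which contradicts Proposition~\ref{lm:HLE}: by that inequality and by weak lower semicontinuity of the Dirichlet integral,
\[
(q-1)\int_\Omega w^{q-2}\,\phi^2\,dx\le (q-1)\int_\Omega|\nabla\phi|^2\,dx\le (q-1)\liminf_{n\to\infty}\int_\Omega|\nabla\phi_n|^2\,dx=q-1<1 .
\]

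The main obstacle is exactly the passage to the limit on the bad set $G_n^c$, where $\Psi_n$ genuinely blows up (at points where $w+t\,v_n$ is close to $0$); what saves the argument is that on $G_n^c$ one has $|v_n|>w/2$, so the singular factor $|v_n|^{q-2}$ can be traded for $w^{q-2}$, reducing everything to the strong $L^2(\Omega;w^{q-2})$-convergence of the $\phi_n$'s granted by \eqref{ipotesicroce}. A secondary point requiring care is the justification that $\Psi_n\,v_n^2\in L^1(\Omega)$ and that the identity $\int_\Omega\Psi_n\,v_n^2\,dx=t_n^2$ is legitimate — both rely on $q>1$ (so $q-2>-1$) and on the $L^\infty$ bound — and the observation that the limiting relation $(q-1)\int_\Omega w^{q-2}\,\phi^2\,dx=1$ automatically excludes the case $\phi\equiv 0$.
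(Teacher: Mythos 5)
Your argument is correct and follows essentially the same route as the paper's proof: linearize the equation through the difference quotient of $s\mapsto|s|^{q-2}s$, dominate the resulting potential by a constant times $w_{\Omega,q}^{q-2}$, use the compact weighted embedding to pass to the limit in the tested identity, and contradict the Hardy--Lane--Emden inequality of Proposition~\ref{lm:HLE} via the factor $q-1<1$. The only deviations are technical: you normalize $\phi_n$ by the Dirichlet norm instead of the weighted $L^2$ norm (so that the nondegeneracy of $\phi$ comes out of the limit identity rather than from the normalization itself), and you obtain the bound $\Psi_n\lesssim w_{\Omega,q}^{q-2}$ by splitting into the sets $\{|v_n|\le w_{\Omega,q}/2\}$ and its complement, whereas the paper gets the global pointwise bound in one stroke from Lemma~\ref{lm:quasilipschitz}.
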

\begin{proof}
We argue by contradiction. We assume that there exists a sequence $\{U_n\}_{n\in\mathbb{N}}\subset\mathcal{D}^{1,2}_0(\Omega)$ of solutions  of the equation \eqref{LE}, such that 
\[
\lim_{n\to\infty} \|U_n-w_{\Omega,q}\|_{L^1(\Omega)}=0.
\]
We first observe that by Corollary \ref{coro:accesso}, we automatically get
\[
\lim_{n\to\infty} \|\nabla w_{\Omega,q}-\nabla U_n\|_{L^2(\Omega)}=0,
\]
as well.
\par
By subtracting the equations satisfied by $w_{\Omega,q}$ and $U_n$, we get
\begin{equation}
\label{equadiff}
\int_\Omega \langle \nabla(w_{\Omega,q}-U_n),\nabla \varphi\rangle\,dx=\int_\Omega (w_{\Omega,q}^{q-1}-|U_n|^{q-2}\,U_n)\,\varphi\,dx,
\end{equation}
for every $\varphi\in \mathcal{D}^{1,2}_0(\Omega)$. By introducing the function
\[
W_n=\frac{w_{\Omega,q}^{q-1}-|U_n|^{q-2}\,U_n}{w_{\Omega,q}-U_n},
\]
\eqref{equadiff} can be rewritten as
\[
\int_\Omega \langle \nabla(w_{\Omega,q}-U_n),\nabla \varphi\rangle\,dx=\int_\Omega W_n\,(w_{\Omega,q}-U_n)\,\varphi\,dx.
\]
Thus, if we define the rescaled sequence
\[
\phi_n=\frac{w_{\Omega,q}-U_n}{\|w_{\Omega,q}-U_n\|_{L^2(\Omega;w_{\Omega,q}^{q-2})}},\qquad \text{ for } n\in\mathbb{N},
\]
we get that $\phi_n\in\mathcal{D}^{1,2}_0(\Omega)$ is a weak solution of
\begin{equation}
\label{equazione}
-\Delta \phi_n=W_n\,\phi_n.
\end{equation}
We observe that 
\begin{equation}
\label{puntuale}
0\le W_n(x)\le 2^{2-q}\,w_{\Omega,q}^{q-2}(x), \qquad \text{ for every } x\in\Omega,
\end{equation}
thanks to Lemma \ref{lm:quasilipschitz}. 
Then, by testing the equation \eqref{equazione} with $\phi_n$ itself and recalling the normalization taken, we get
\[
\int_\Omega |\nabla \phi_n|^2\,dx\le 2^{2-q}\,\int_\Omega w_{\Omega,q}^{q-2}\,|\phi_n|^2\,dx=2^{2-q},\qquad \text{ for every } n\in\mathbb{N}.
\]
Thus, the assumption on the compactness of the embedding $\mathcal{D}^{1,2}_0(\Omega) \hookrightarrow L^2(\Omega;w_\Omega^{q-2})$ implies that $\{\phi_n\}_{n\in\mathbb{N}}$ converges (up to a subsequence) weakly in $\mathcal{D}^{1,2}_0(\Omega)$ and strongly in $L^2(\Omega;w_{\Omega,q}^{q-2})$ to $\phi\in \mathcal{D}^{1,2}_0(\Omega)$. In particular, we still have
\begin{equation}
\label{normauno}
\|\phi\|_{L^2(\Omega;w_{\Omega,q}^{q-2})}=1,\qquad \text{ so that }\ \phi\not\equiv 0.
\end{equation}
Observe that we have\footnote{This can be seen as follows: take $\Omega'\Subset\Omega$, thus $U\ge 1/C_{\Omega'}$ on $\Omega'$. By using the equation and standard Elliptic Regularity, we have that $U_n$ converges uniformly on $\overline{\Omega'}$ to $U$. Thus for $n$ large enough the function
\[
t\mapsto \Big(|U(x)+t\,(U_n(x)-U(x))|^{q-2}\,(U(x)+t\,(U_n(x)-U(x)))\Big),\qquad t\in [0,1],
\]
is differentiable, for every $x\in\Omega'$.
Then we write
\[
\begin{split}
U^{q-1}-|U_n|^{q-2}\,U_n&=-\int_0^1 \frac{d}{dt} \Big(|U+t\,(U_n-U)|^{q-2}\,(U+t\,(U_n-U))\Big)\,dt\\
&=(q-1)\,\left(\int_0^1 |U+t\,(U_n-U)|^{q-2}\,dt\right)\,(U-U_n).
\end{split}
\]
By dividing by $(U-U_n)$ and passing to the limit, we get the desired conclusion for every $x\in\Omega'$. By arbitrariness of $\Omega'\Subset\Omega$, we get the pointwise almost everywhere convergence.}
\begin{equation}
\label{ptw}
\lim_{n\to\infty} W_n(x)=(q-1)\,w_{\Omega,q}^{q-2}(x),\qquad \text{ for a.\,e. }x\in\Omega.
\end{equation}
By testing equation \eqref{equazione} with $\phi_n$ itself, we get
\[
\int_\Omega |\nabla \phi_n|^2\,dx=\int_\Omega W_n\,|\phi_n|^2\,dx\le \int_\Omega W_n\,\Big||\phi_n|^2-|\phi|^2\Big|\,dx+\int_\Omega W_n\,|\phi|^2\,dx.
\]
By using the lower semicontinuity of the $L^2$ norm in the left-hand side and the Dominated Convergence Theorem in the right-hand side,
in view of \eqref{ptw} and \eqref{normauno} we get
\begin{equation}
\label{intertempo}
\int_\Omega |\nabla \phi|^2\,dx\le \limsup_{n\to\infty}\int_\Omega W_n\,\Big||\phi_n|^2-|\phi|^2\Big|\,dx+(q-1).
\end{equation}
We are only left with handling the limit 
\[
\limsup_{n\to\infty}\int_\Omega W_n\,\Big||\phi_n|^2-|\phi|^2\Big|\,dx.
\]
This is done as follows: by elementary manipulations and \eqref{puntuale}
\[
\begin{split}
\int_\Omega W_n\,\Big||\phi_n|^2-|\phi|^2\Big|\,dx&=\int_\Omega W_n\,\Big|\phi_n-\phi\Big|\,\Big|\phi_n+\phi\Big|\,dx\\
&\le 2^{2-q}\,\int_\Omega w_{\Omega,q}^{q-2}\, \Big|\phi_n-\phi\Big|\,\Big|\phi_n+\phi\Big|\,dx\\
&\le 2^{2-q}\,\left(\int_\Omega w_{\Omega,q}^{q-2}\,|\phi_n-\phi|^2\,dx\right)^\frac{1}{2}\\
&\times \left(\int_\Omega w_{\Omega,q}^{q-2}\,|\phi_n+\phi|^2\,dx\right)^\frac{1}{2}.
\end{split}
\]
By using the strong convergence in $L^2(\Omega;w_{\Omega,q}^{q-2})$ for the first integral and the Hardy-Lane-Emden inequality (i.e. Proposition \ref{lm:HLE}) to bound the second integral, we get 
\[
\limsup_{n\to\infty}\int_\Omega W_n\,\Big||\phi_n|^2-|\phi|^2\Big|\,dx=0.
\]
By using this in \eqref{intertempo}, we finally end up with 
\[
\int_\Omega |\nabla \phi|^2\,dx\le (q-1).
\]
On the other hand, by Proposition \ref{lm:HLE} we must have
\[
1=\int_\Omega w_{\Omega}^{q-2}\,|\phi^2|\,dx\le \int_\Omega |\nabla \phi|^2\,dx.
\]
Since $q<2$, the last two displays give the desired contradiction.
\end{proof}

We now proceed to analyze the situation for the first $q-$eigenvalue. Before doing this, we need the following result. 
\begin{lm}
\label{lm:componenti}
Let $1<q<2$ and let $\Omega\subset\mathbb{R}^N$ be an open bounded set, such that the embedding $\mathcal{D}^{1,2}_0(\Omega)\hookrightarrow L^2(\Omega; w_{\Omega,q}^{q-2})$ is compact. Then $\Omega$ must have a finite number of connected components.
\end{lm}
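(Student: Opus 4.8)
The plan is to argue by contradiction. Suppose $\Omega$ has infinitely many connected components $\{\Omega_i\}_{i\in\mathbb{N}}$; I will manufacture an orthonormal sequence in $\mathcal{D}^{1,2}_0(\Omega)$ which simultaneously lies on the unit sphere of $L^2(\Omega;w_{\Omega,q}^{q-2})$, in flat contradiction with the assumed compactness. The first step is to record how everything decouples over components. Since each $\Omega_i$ is open and relatively closed in $\Omega$, and a function in $C^\infty_0(\Omega)$ meets only finitely many $\Omega_i$ (so that multiplication by $\mathbf 1_{\Omega_i}$ preserves this class and passes to the completion), one has the Hilbert direct sum decomposition $\mathcal{D}^{1,2}_0(\Omega)=\widehat{\bigoplus}_i\mathcal{D}^{1,2}_0(\Omega_i)$ for the inner product $(u,v)\mapsto\int_\Omega\langle\nabla u,\nabla v\rangle\,dx$; in particular $u\,\mathbf 1_{\Omega_i}\in\mathcal{D}^{1,2}_0(\Omega_i)$ for every $u\in\mathcal{D}^{1,2}_0(\Omega)$. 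Because $\mathfrak{F}_q(\varphi)=\sum_i\mathfrak{F}_q(\varphi\,\mathbf 1_{\Omega_i})$ and each summand is minimized independently, the minimizer $w_{\Omega,q}$ (which still exists by the Direct Method, as $\Omega$ bounded gives a uniform Poincaré constant and $q<2$ makes $\mathfrak{F}_q$ coercive and bounded below) satisfies $w_{\Omega,q}\,\mathbf 1_{\Omega_i}=w_{\Omega_i,q}$, with $w_{\Omega_i,q}\not\equiv 0$ since $\inf\mathfrak{F}_q<0$ on each nonempty $\Omega_i$ (test with $t\,\psi$, $\psi\ge0$, $t\to0^+$, using $q<2$).

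The second, and decisive, step exploits the fact that the Hardy--Lane--Emden inequality of Proposition \ref{lm:HLE} is \emph{attained} by $w_{\Omega_i,q}$ on $\Omega_i$: testing $-\Delta w_{\Omega_i,q}=w_{\Omega_i,q}^{q-1}$ with $w_{\Omega_i,q}$ itself gives $\int_{\Omega_i}|\nabla w_{\Omega_i,q}|^2\,dx=\int_{\Omega_i}w_{\Omega_i,q}^q\,dx=\int_{\Omega_i}w_{\Omega,q}^{q-2}\,w_{\Omega_i,q}^2\,dx$. Hence, setting $\phi_i=w_{\Omega_i,q}\big/\big(\int_{\Omega_i}w_{\Omega_i,q}^q\,dx\big)^{1/2}$, we obtain $\|\phi_i\|_{\mathcal{D}^{1,2}_0(\Omega)}=1=\|\phi_i\|_{L^2(\Omega;w_{\Omega,q}^{q-2})}$ for every $i$. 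The $\phi_i$ have pairwise disjoint supports, so $(\phi_i,\phi_j)_{\mathcal{D}^{1,2}_0(\Omega)}=0$ for $i\neq j$; thus $\{\phi_i\}_i$ is an orthonormal system in the Hilbert space $\mathcal{D}^{1,2}_0(\Omega)$ and, by Bessel's inequality, $\phi_i\rightharpoonup 0$ weakly in $\mathcal{D}^{1,2}_0(\Omega)$. The assumed compactness of the embedding forces $\phi_i\to 0$ strongly in $L^2(\Omega;w_{\Omega,q}^{q-2})$ (along a subsequence, which is enough, the weak limit in $L^2(\Omega;w_{\Omega,q}^{q-2})$ being $0$ by continuity of the embedding), contradicting $\|\phi_i\|_{L^2(\Omega;w_{\Omega,q}^{q-2})}=1$. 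This proves $\Omega$ can only have finitely many components.

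The only genuinely delicate ingredient is the organizational one of Step 1: the direct-sum decomposition of $\mathcal{D}^{1,2}_0(\Omega)$ over infinitely many components, the identification $w_{\Omega,q}\,\mathbf 1_{\Omega_i}=w_{\Omega_i,q}$, and the well-posedness of $w_{\Omega,q}$ in this generality. Everything after that is soft Hilbert-space reasoning; the real structural point is the \emph{sharpness} of the Hardy--Lane--Emden inequality on each component, which is what makes the two unit spheres meet at $\phi_i$ and thereby turns disjointly-supported eigenfunction-like bumps into a violation of compactness.
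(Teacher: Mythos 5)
Your proof is correct and follows essentially the same strategy as the paper: both take the sequence $w_{\Omega_i,q}/\|\nabla w_{\Omega_i,q}\|_{L^2(\Omega_i)}$ (your $L^q$-normalization coincides with this via the energy identity $\int_{\Omega_i}|\nabla w_{\Omega_i,q}|^2\,dx=\int_{\Omega_i}w_{\Omega_i,q}^q\,dx$), observe through the Lane--Emden equation that it lies on the unit sphere of $L^2(\Omega;w_{\Omega,q}^{q-2})$, and derive a contradiction with the assumed compactness. The only difference is in how the limit is identified as zero: the paper uses Faber--Krahn together with Poincar\'e to show the sequence converges strongly to zero in $L^2(\Omega)$, whereas you use the orthogonality of the disjointly supported bumps and Bessel's inequality to get weak convergence to zero directly --- a slightly softer route to the same conclusion.
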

\begin{proof}
We argue by contradiction and assume that $\Omega$ has countably many connected components $\{\Omega_n\}_{n\in\mathbb{N}}$. We take the sequence 
\[
\varphi_n=\frac{w_{\Omega_n,q}}{\|\nabla w_{\Omega_n,q}\|_{L^2(\Omega_n)}},\qquad n\in\mathbb{N},
\]
which is bounded in $\mathcal{D}^{1,2}_0(\Omega)$. By using Poincar\'e inequality, we have 
\begin{equation}
\label{ciaone}
\int_\Omega |\varphi_n|^2\,dx=\int_{\Omega_n} |\varphi_n|^2\,dx\le \frac{1}{\lambda_1(\Omega_n)}\,\int_{\Omega_n} |\nabla \varphi_n|^2\,dx=\frac{1}{\lambda_1(\Omega_n)},
\end{equation}
where we recall that $\lambda_1$ stands for the first eigenvalue of the Dirichlet-Laplacian. 
We now observe that by the {\it Faber-Krahn inequality}, we have 
\[
\lambda_1(\Omega_n)\ge |\Omega_n|^{-\frac{2}{N}}\,\Big(|B|^\frac{2}{N}\,\lambda_1(B)\Big),
\]
where $B$ is any $N-$dimensional ball. Moreover, since $\Omega$ is bounded, we have 
\[
\lim_{n\to\infty} |\Omega_n|=0.
\]
The last two displays show that $\lambda_1(\Omega_n)$ diverges to $+\infty$, as $n$ goes to $\infty$. By \eqref{ciaone} we thus get that $\varphi_n$ converges strongly to $0$ in $L^2(\Omega)$.
\par
On the other hand, by computing the weighted $L^2$ norm of $\varphi_n$ and recalling \eqref{sommine}, we have
\[
\int_\Omega \frac{|\varphi_n|^2}{w_{\Omega,q}^{2-q}}\,dx=\int_{\Omega_n} \frac{|\varphi_n|^2}{w_{\Omega_n,q}^{2-q}}\,dx=\frac{\displaystyle\int_{\Omega_n} w_{\Omega_n,q}^q\,dx}{\displaystyle\int_{\Omega_n} |\nabla w_{\Omega_n,q}|^2\,dx}=1,
\]
where in the last identity we used that $w_{\Omega_n,q}$ solves the Lane-Emden equation on $\Omega_n$. This contradicts the compactness of the embedding $\mathcal{D}^{1,2}_0(\Omega)\hookrightarrow L^2(\Omega; w_{\Omega,q}^{q-2})$.
\end{proof}
By using Proposition \ref{lm:convergencebis}, we can now get the following result for the first $q-$eigenvalue.
\begin{prop}
\label{prop:bis}
Let $1<q<2$ and let $\Omega\subset\mathbb{R}^N$ be an open bounded set. Let us assume that condition \eqref{ipotesicroce} holds.
Then the first $q-$eigenvalue 
\[
\lambda_1(\Omega;q)=\min_{u\in\mathcal{D}^{1,2}_0(\Omega)} \left\{\int_\Omega |\nabla u|^2\,dx\, :\, \int_\Omega |u|^q\,dx=1\right\},
\]
is isolated in $\mathrm{Spec}(\Omega;q)$.
\end{prop}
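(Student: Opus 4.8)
The plan is to argue by contradiction and reduce the statement to the isolation result for the Lane-Emden equation, Proposition \ref{lm:convergencebis}, using that under \eqref{ipotesicroce} the set $\Omega$ has a finite number of connected components $\Omega_1,\dots,\Omega_k$ by Lemma \ref{lm:componenti}. Suppose $\lambda_1(\Omega;q)$ is not isolated in $\mathrm{Spec}(\Omega;q)$. Since $\lambda\ge \lambda_1(\Omega;q)$ for every $\lambda\in\mathrm{Spec}(\Omega;q)$, there is a sequence $\lambda_n\in\mathrm{Spec}(\Omega;q)$ with $\lambda_n>\lambda_1(\Omega;q)$ and $\lambda_n\to\lambda_1(\Omega;q)$. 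For each $n$ we pick a $q$-eigenfunction relative to $\lambda_n$; by $1$-homogeneity of \eqref{autovalorenonlocal} we may normalize it to $u_n$ with $\|u_n\|_{L^q(\Omega)}=1$, so that $-\Delta u_n=\lambda_n\,|u_n|^{q-2}u_n$ and, testing with $u_n$, $\int_\Omega |\nabla u_n|^2\,dx=\lambda_n$.

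Since $\{u_n\}$ is bounded in $\mathcal{D}^{1,2}_0(\Omega)$ and the embedding $\mathcal{D}^{1,2}_0(\Omega)\hookrightarrow L^q(\Omega)$ is compact ($\Omega$ bounded, $q<2^*$), up to a subsequence $u_n\rightharpoonup u$ weakly in $\mathcal{D}^{1,2}_0(\Omega)$ and $u_n\to u$ strongly in $L^q(\Omega)$. Then $\|u\|_{L^q(\Omega)}=1$, and from $\lambda_1(\Omega;q)\le \int_\Omega |\nabla u|^2\,dx\le \liminf_n \int_\Omega |\nabla u_n|^2\,dx=\lambda_1(\Omega;q)$ we deduce that $u$ realizes the minimum defining $\lambda_1(\Omega;q)$, hence solves $-\Delta u=\lambda_1(\Omega;q)\,|u|^{q-2}u$. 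Rescaling, the functions $U_n:=\lambda_n^{1/(q-2)}u_n$ and $U:=\lambda_1(\Omega;q)^{1/(q-2)}u$ are solutions of the Lane-Emden equation \eqref{LE}, with $\|U_n\|_{L^q(\Omega)}=\lambda_n^{1/(q-2)}$ and $\|U\|_{L^q(\Omega)}=\lambda_1(\Omega;q)^{1/(q-2)}=\|w_{\Omega,q}\|_{L^q(\Omega)}$, the last identity by Remark \ref{oss:multiple}; moreover $U_n\to U$ in $L^q(\Omega)$, and hence in $L^1(\Omega)$.

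The next step is to show that $|U|=w_{\Omega,q}$. Testing \eqref{LE} with the solution itself, any solution $v$ of \eqref{LE} satisfies $\mathfrak{F}_q(v)=\big(\tfrac12-\tfrac1q\big)\|v\|_{L^q(\Omega)}^q$, which, since $q<2$, is a strictly decreasing function of $\|v\|_{L^q(\Omega)}$. As $w_{\Omega,q}$ globally minimizes $\mathfrak{F}_q$, the equality $\|U\|_{L^q(\Omega)}=\|w_{\Omega,q}\|_{L^q(\Omega)}$ forces $\mathfrak{F}_q(U)=\min \mathfrak{F}_q$; since $\mathfrak{F}_q(|U|)=\mathfrak{F}_q(U)$, the function $|U|$ is a non-negative minimizer of $\mathfrak{F}_q$, so $|U|=w_{\Omega,q}$ by uniqueness (Definition \ref{defi:boundeddensity} and Remark \ref{oss:multiple}). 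By the decomposition \eqref{sommine}, on each component $|U_{|\Omega_i}|=w_{\Omega_i,q}$, which is positive in $\Omega_i$ by the strong maximum principle; since $U_{|\Omega_i}$ is a solution of \eqref{LE} on the connected set $\Omega_i$, it keeps a constant sign there, i.e. $U_{|\Omega_i}=\varepsilon_i\,w_{\Omega_i,q}$ with $\varepsilon_i\in\{-1,+1\}$.

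To conclude, I would apply Proposition \ref{lm:convergencebis} componentwise. First, hypothesis \eqref{ipotesicroce} is inherited by each $\Omega_i$: extension by zero embeds $\mathcal{D}^{1,2}_0(\Omega_i)$ into $\mathcal{D}^{1,2}_0(\Omega)$, and since $w_{\Omega,q}\equiv w_{\Omega_i,q}$ on $\Omega_i$ by \eqref{sommine}, compactness of $\mathcal{D}^{1,2}_0(\Omega)\hookrightarrow L^2(\Omega;w_{\Omega,q}^{q-2})$ restricts to compactness of $\mathcal{D}^{1,2}_0(\Omega_i)\hookrightarrow L^2(\Omega_i;w_{\Omega_i,q}^{q-2})$. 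Now $\varepsilon_i\,U_{n|\Omega_i}$ solves \eqref{LE} on $\Omega_i$ and tends to $\varepsilon_i\,U_{|\Omega_i}=w_{\Omega_i,q}$ in $L^1(\Omega_i)$, so Proposition \ref{lm:convergencebis} applied to $\Omega_i$ gives $\varepsilon_i\,U_{n|\Omega_i}=w_{\Omega_i,q}$ for $n$ large; as there are only finitely many components, for $n$ large this yields $U_n=\sum_{i=1}^k \varepsilon_i\,w_{\Omega_i,q}=U$, whence $\lambda_n^{1/(q-2)}=\|U_n\|_{L^q(\Omega)}=\|U\|_{L^q(\Omega)}=\lambda_1(\Omega;q)^{1/(q-2)}$, that is $\lambda_n=\lambda_1(\Omega;q)$, a contradiction. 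I expect the two delicate points to be the identification $|U|=w_{\Omega,q}$ and, above all, the disconnected case: since in general $U$ need not equal $w_{\Omega,q}$ but only satisfies $|U|=w_{\Omega,q}$, one cannot invoke Proposition \ref{lm:convergencebis} directly on $\Omega$, and the reduction to connected components is legitimate only because $\Omega$ has finitely many of them (Lemma \ref{lm:componenti}) and because \eqref{ipotesicroce} passes to each of them.
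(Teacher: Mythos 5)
Your proof is correct, but it is organized differently from the paper's. The paper splits into two cases: for connected $\Omega$ it invokes the simplicity of $\lambda_1(\Omega;q)$ (from \cite{BF}) to identify the limit of the normalized eigenfunctions with $\pm$ the first positive eigenfunction and then contradicts Proposition \ref{lm:convergencebis} directly; for disconnected $\Omega$ it does \emph{not} pass through the eigenfunctions at all, but uses the ``spin formula'' of \cite{BFcontro} to write each $\lambda_n$ in terms of eigenvalues of the components and reduces the matter to the isolation of each $\lambda_1(\Omega_i;q)$, already settled in the connected case. You instead give a single unified argument: the energy identity $\mathfrak{F}_q(v)=(\tfrac12-\tfrac1q)\|v\|_{L^q}^q$ for solutions of \eqref{LE}, together with the equality of $L^q$ norms, forces the rescaled limit $U$ to be a global minimizer of $\mathfrak{F}_q$, whence $|U|=w_{\Omega,q}$ by uniqueness of the non-negative minimizer, and then Proposition \ref{lm:convergencebis} is applied on each component after adjusting signs. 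This buys self-containedness (no appeal to the simplicity theorem or to the spin formula) at the price of having to check two things the paper's route avoids: that $U$ has constant sign on each component (which needs continuity of $U$, available from the $L^\infty$ bound of Appendix B plus elliptic regularity, but worth saying) and that hypothesis \eqref{ipotesicroce} is inherited by each $\Omega_i$ --- a point you handle correctly and which the paper itself leaves implicit when it applies ``the first part of the proof'' to the components. One cosmetic remark: the identity $\lambda_1(\Omega;q)=\|w_{\Omega,q}\|_{L^q(\Omega)}^{q-2}$ is stated in Remark \ref{oss:multiple} only for connected $\Omega$; for disconnected $\Omega$ it follows from \eqref{sommine} and the formula for $\lambda_1(\Omega;q)$ given there, and you should note this one-line verification since your energy argument hinges on it.
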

\begin{proof}
We argue by contradiction and assume that there exists a sequence $\{\lambda_n\}_{n\in\mathbb{N}}\subset\mathrm{Spec}(\Omega;q)$ such that
\begin{equation}
\label{assurdo}
\lim_{n\to\infty} \lambda_n=\lambda_1(\Omega;q).
\end{equation}
We have to consider two different cases: either $\Omega$ is connected or not.
\vskip.2cm\noindent
{\bf Case 1: $\Omega$ is connected}. 
There exists a sequence $\{u_n\}_{n\in\mathbb{N}}\subset\mathcal{D}^{1,2}_0(\Omega)$ of normalized $q-$eigenfunctions for $\Omega$, i.\,e.
\[
-\Delta u_n=\lambda_n\,|u_n|^{q-2}\,u_n, \text{ in }\Omega,\qquad \text{ with }\ \|u_n\|_{L^q(\Omega)}=1.
\]
By using the equation, it is not difficult to see that
\begin{equation}
\label{assurdo2}
\lim_{n\to\infty} \|\nabla u_n-\nabla u\|_{L^2(\Omega)}=0,
\end{equation}
where $u\in \mathcal{D}^{1,2}_0(\Omega)$ is a first $q-$eigenfunction, with unit $L^q$ norm. Since we are assuming $\Omega$ to be connected, the first $q-$eigenvalue is simple (see \cite[Theorem 3.1]{BF}). Thus, upon replacing $u_n$ with $-u_n$, we can suppose that $u$ is the first positive $q-$eigenfunction, with unit $L^q$ norm.
\par
We note that the rescaled functions
\[
U=\lambda_1(\Omega;q)^{\frac{1}{q-2}}\,u\qquad \text{ and }\qquad U_n=\lambda_n^{\frac{1}{q-2}}\,u_n,
\]
solve the Lane-Emden equation \eqref{LE} in $\Omega$. Moreover, by uniqueness of the positive solution, $U$ coincides with $w_{\Omega,q}$.
From \eqref{assurdo} and \eqref{assurdo2}, we can thus infer that
\[
\lim_{n\to\infty} \|\nabla U_n-\nabla w_{\Omega,q}\|_{L^2(\Omega)}=0.
\]
However, this contradicts Proposition \ref{lm:convergencebis}.
\vskip.2cm\noindent
{\bf Case 2: $\Omega$ is not connected.} By Lemma \ref{lm:componenti}, we know that assumption \eqref{ipotesicroce} guarantees that $\Omega$ has a finite number $\Omega_1,\dots,\Omega_k$ of connected components. 
By the ``spin formula'' of \cite[Proposition 2.1 \& Corollary 2.2]{BFcontro}, we know that 
\[
\lambda_n=\left[\sum_{i=1}^k \left(\frac{\delta_{n,i}}{\lambda_{n,i}}\right)^\frac{q}{2-q}\right]^\frac{q-2}{q},
\] 
for some $\lambda_{n,i}\in\mathrm{Spec}(\Omega_i;q)$ and $\delta_{n,i}\in \{0,1\}$ such that
\[
\sum_{i=1}^k \delta_{n,i}\not =0,\qquad \text{ for every } n\in\mathbb{N}.
\]
Moreover,  by \cite[Corollary 2.6]{BFcontro}, we must have
\[
\lambda_1(\Omega;q)=\left[\sum_{i=1}^k \left(\frac{1}{\lambda_1(\Omega_i;q)}\right)^\frac{q}{2-q}\right]^\frac{q-2}{q}.
\]
In light of \eqref{assurdo}, we thus get that\footnote{Observe that the function
\[
f(t_1,\dots,t_k)=\left[\sum_{i=1}^k t_i^\frac{q}{2-q}\right]^\frac{q-2}{q},\qquad \text{ for } t_i\le \frac{1}{\lambda_1(\Omega_i;q)},
\]
is strictly decreasing in each argument and it uniquely attains its minimum when 
\[
t_i= \frac{1}{\lambda_1(\Omega_i;q)},\qquad \text{ for every } i=1,\dots,k.
\]
Thus \eqref{assurdo} entails that
\[
\lim_{n\to\infty}\frac{\delta_{n,i}}{\lambda_{n,i}}=\frac{1}{\lambda_1(\Omega_i;q)},\qquad \text{ for every } i=1,\dots,k.
\]}
\[
\lambda_n=\left[\sum_{i=1}^k \left(\frac{1}{\lambda_{n,i}}\right)^\frac{q}{2-q}\right]^\frac{q-2}{q},
\]
for $n$ large enough, with $\lambda_{n,i}$ converging to $\lambda_1(\Omega_i;q)$, for every $i=1,\dots,k$. However, this contradicts the fact that each $\lambda_1(\Omega_i;q)$ is isolated, by the first part of the proof.
\end{proof}

\section{A weighted embedding}
\label{sec:5}

The results of the previous section lead us to study conditions on $\Omega$ under which
\[
\text{the embedding }\mathcal{D}^{1,2}_0(\Omega)\hookrightarrow L^2(\Omega; w_{\Omega,q}^{q-2}) \text{ is compact}.
\]
We have seen in Lemma \ref{lm:componenti} that a necessary condition is that $\Omega$ has a finite number of connected components.
We now provide a sufficient condition, as well. The sharpness of the assumptions is discussed in Example \ref{exa:viola} below.
\begin{prop}
\label{lm:weighted}
Let $\Omega\subset\mathbb{R}^N$ be an open bounded Lipschitz set, with homogeneity index $\alpha_\Omega$. Then:
\begin{itemize}
\item if $1\le \alpha_\Omega\le 2$, the embedding $\mathcal{D}^{1,2}_0(\Omega)\hookrightarrow L^2(\Omega; w_{\Omega,q}^{q-2})$ is compact for every $1<q<2$;
\vskip.2cm
\item if $\alpha_\Omega>2$, the embedding $\mathcal{D}^{1,2}_0(\Omega)\hookrightarrow L^2(\Omega; w_{\Omega,q}^{q-2})$ is compact for every
\begin{equation*}
2-\frac{2}{\alpha_\Omega}<q<2.
\end{equation*}
In particular, by  \eqref{e:C1}, the embedding \(\mathcal{D}^{1,2}_0(\Omega)\hookrightarrow L^2(\Omega; w_{\Omega,q}^{q-2})\) is compact for every $1<q<2$ if \(\Omega\) is of class \(C^1\).
\end{itemize}
\end{prop}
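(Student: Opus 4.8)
The idea is to estimate the singular weight $w_{\Omega,q}^{q-2}$ from above by a negative power of the distance $d_\Omega$, and then to reduce the assertion to a distance-weighted compactness statement. Concretely, I would start from Corollary \ref{coro:fiesta}: for every $\alpha>\alpha_\Omega$ there is $\mathcal{C}=\mathcal{C}(N,q,\Omega,\alpha)$ with $w_{\Omega,q}(x)\ge \mathcal{C}^{-1}\,d_\Omega(x)^{\alpha}$, hence, since $2-q>0$,
\[
w_{\Omega,q}^{q-2}(x)=w_{\Omega,q}(x)^{-(2-q)}\le \mathcal{C}^{2-q}\,d_\Omega(x)^{-\alpha(2-q)},\qquad x\in\Omega.
\]
Under the hypotheses of the statement one has $\alpha_\Omega\,(2-q)<2$: if $1\le\alpha_\Omega\le 2$ this follows at once from $2-q<1$, while if $\alpha_\Omega>2$ it is exactly the assumption $q>2-2/\alpha_\Omega$. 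Thus I may fix $s$ with $\alpha_\Omega\,(2-q)<s<2$ and set $\alpha:=s/(2-q)>\alpha_\Omega$, so that
\[
w_{\Omega,q}^{q-2}(x)\le \mathcal{C}^{2-q}\,d_\Omega(x)^{-s},\qquad x\in\Omega,\qquad 0<s<2.
\]
In particular $L^2(\Omega;d_\Omega^{-s})\hookrightarrow L^2(\Omega;w_{\Omega,q}^{q-2})$ is continuous, and it is enough to prove that for a bounded Lipschitz set $\Omega$ and $0<s<2$ the embedding $\mathcal{D}^{1,2}_0(\Omega)\hookrightarrow L^2(\Omega;d_\Omega^{-s})$ is compact.

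For this last claim I would invoke the classical Hardy inequality on bounded Lipschitz sets (which can be obtained by localizing via an atlas and using the one-dimensional Hardy inequality in the transversal direction): there is $C_H=C_H(\Omega)>0$ such that $\int_\Omega |\varphi|^2\,d_\Omega^{-2}\,dx\le C_H\int_\Omega|\nabla\varphi|^2\,dx$ for all $\varphi\in\mathcal{D}^{1,2}_0(\Omega)$. By H\"older's inequality with exponents $2/s$ and $2/(2-s)$, and then by Hardy,
\[
\int_\Omega d_\Omega^{-s}\,|\varphi|^2\,dx\le\left(\int_\Omega \frac{|\varphi|^2}{d_\Omega^{2}}\,dx\right)^{s/2}\left(\int_\Omega|\varphi|^2\,dx\right)^{1-s/2}\le C_H^{s/2}\,\|\nabla\varphi\|_{L^2(\Omega)}^{s}\,\|\varphi\|_{L^2(\Omega)}^{2-s}.
\]
Given a sequence bounded in $\mathcal{D}^{1,2}_0(\Omega)$ I would extract a subsequence converging weakly in $\mathcal{D}^{1,2}_0(\Omega)$ and, by the classical Rellich theorem (for which only boundedness of $\Omega$ is needed), strongly in $L^2(\Omega)$; applying the displayed inequality to the differences of this subsequence, the factor $\|\cdot\|_{L^2(\Omega)}^{2-s}$ tends to zero while $\|\nabla(\cdot)\|_{L^2(\Omega)}^{s}$ stays bounded, so the subsequence is Cauchy in $L^2(\Omega;d_\Omega^{-s})$. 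This yields the compactness of $\mathcal{D}^{1,2}_0(\Omega)\hookrightarrow L^2(\Omega;d_\Omega^{-s})$, and composing with the continuous inclusion of the previous paragraph gives the statement; the final remark about $C^1$ sets is then immediate from \eqref{e:C1}, since in that case $\alpha_\Omega=1$ falls under the first bullet.

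The only genuinely non-elementary ingredient is the distance-weight Hardy inequality for bounded Lipschitz sets; everything else reduces to H\"older's inequality together with the standard Rellich compactness. The point to be careful about is purely bookkeeping: one must check that the restrictions on $q$ in the two cases are exactly what forces $\alpha_\Omega\,(2-q)<2$, which is what makes room for an admissible exponent $s\in(\alpha_\Omega(2-q),2)$ and a corresponding $\alpha>\alpha_\Omega$ to which Corollary \ref{coro:fiesta} applies.
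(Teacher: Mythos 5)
Your argument is correct and is essentially the paper's own proof: you use the lower bound of Corollary \ref{coro:fiesta} to control $w_{\Omega,q}^{q-2}$ by $d_\Omega^{-(2-q)\alpha}$ with $\alpha>\alpha_\Omega$ and $(2-q)\,\alpha<2$, then interpolate via H\"older between the Lipschitz Hardy inequality \eqref{necas} and the $L^2$ norm, and conclude by Rellich compactness of $\mathcal{D}^{1,2}_0(\Omega)\hookrightarrow L^2(\Omega)$ applied to differences of a subsequence. The only (cosmetic) difference is the order of the two reductions — you first replace the weight by $d_\Omega^{-s}$ and then interpolate, whereas the paper interpolates first and substitutes the weight at the end — and your bookkeeping of the admissible range of $q$ in the two cases matches the paper's exactly.
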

\begin{proof}
Since $\Omega\subset\mathbb{R}^N$ is an open bounded Lipschitz set, there exists a constant $C_\Omega>0$ such that the classical {\it Hardy inequality}  holds
\begin{equation}
\label{necas}
\int_\Omega \frac{|\varphi|^2}{d_\Omega^2}\,dx\le C_\Omega\,\int_\Omega |\nabla \varphi|^2\,dx,\qquad \text{ for every } \varphi\in\mathcal{D}^{1,2}_0(\Omega),
\end{equation}
see \cite[Th\'eor\`eme 1.6]{Ne} or also \cite[Theorem 8.4]{Ku}. We now discuss separately the two cases:
\vskip.2cm\noindent
{\bf Case $\alpha_\Omega\le 2$}. In this case, we have 
\[
\alpha_\Omega\le 2<\frac{2}{2-q},\qquad \text{ for every } 1<q<2.
\]
Thus, we can fix \(\alpha_\Omega<\alpha <2/(2-q)\) such that \eqref{e:culo} holds. By   \eqref{necas} and H\"older's inequality with exponents
\[
\frac{2}{(2-q)\,\alpha}\qquad \text{ and }\qquad \frac{2}{2-(2-q)\,\alpha},
\] 
we get for every $\varphi\in \mathcal{D}^{1,2}_0(\Omega)$
\[
\begin{split}
\int_\Omega \frac{|\varphi|^2}{d_\Omega^{(2-q)\,\alpha}}\,dx&\le \left(\int_\Omega \frac{|\varphi|^2}{d_\Omega^2}\right)^\frac{(2-q)\,\alpha}{2}\,\left(\int_\Omega |\varphi|^2\,dx\right)^{1-\frac{2-q}{2}\,\alpha}\\
&\le C_\Omega^\frac{(2-q)\,\alpha}{2}\,\left(\int_\Omega |\nabla \varphi|^2\,dx\right)^\frac{(2-q)\,\alpha}{2}\,\left(\int_\Omega |\varphi|^2\,dx\right)^{1-\frac{2-q}{2}\,\alpha}.
\end{split}
\]
Moreover, by \eqref{e:culo} we have $d_\Omega^{\alpha}\le \mathcal{C}\, w_{\Omega,q}$. Thus we get the following interpolation inequality
\[
\int_\Omega \frac{|\varphi|^2}{w_{\Omega,q}^{2-q}}\,dx\le \mathcal{C}^{2-q}\,C_\Omega^\frac{(2-q)\,\alpha}{2}\,\left(\int_\Omega |\nabla \varphi|^2\,dx\right)^\frac{(2-q)\,\alpha}{2}\,\left(\int_\Omega |\varphi|^2\,dx\right)^{1-\frac{2-q}{2}\,\alpha}.
\]
By using this and recalling that the embedding $\mathcal{D}^{1,2}_0(\Omega)\hookrightarrow L^2(\Omega)$ is compact for an open bounded set, we get the conclusion. 
\vskip.2cm\noindent
{\bf Case $\alpha_\Omega> 2$}. In this case, we have 
\[
\alpha_\Omega<\frac{2}{2-q},\qquad \text{ for every  $q$ such that } 2-\frac{2}{\alpha_\Omega}<q<2.
\]
We can repeat the previous argument and get again the desired conclusion.
\end{proof}
By the very definition of $\alpha_\Omega$, the assumption $\alpha_\Omega\le 2$ means that the ``corners'' of $\Omega$ should not be ``too narrow''. On the other hand, when $\alpha_\Omega>2$, we have that the higher the value of $\alpha_\Omega$ is, the smaller is the set of exponents $q$ for which the relevant embedding is compact.
\par
Once again, the two-dimensional case is easier to understand. For $N=2$, we can reformulate the previous result as follows
\begin{coro}[Two-dimensional case]
\label{coro:weighted}
Let $\Omega\subset\mathbb{R}^2$ be an open bounded Lipschitz set, with cone index $\beta_\Omega\in [0,1)$.
Then:
\begin{itemize}
\item if $\beta_\Omega\le \cos(\pi/4)$, the embedding $\mathcal{D}^{1,2}_0(\Omega)\hookrightarrow L^2(\Omega; w_{\Omega,q}^{q-2})$ is compact for every $1<q<2$;
\vskip.2cm
\item if $\beta_\Omega>\cos(\pi/4)$, the embedding $\mathcal{D}^{1,2}_0(\Omega)\hookrightarrow L^2(\Omega; w_{\Omega,q}^{q-2})$ is compact for every
\begin{equation*}
2-\frac{4}{\pi}\,\arccos(\beta_\Omega)<q<2.
\end{equation*}
\end{itemize}
\end{coro}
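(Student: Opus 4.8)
The plan is to deduce this corollary from Proposition \ref{lm:weighted} by making the homogeneity index $\alpha_\Omega$ explicit in dimension two. First I would recall that, by the very definition of the homogeneity index and the monotonicity of $\beta\mapsto\alpha(\beta)$, one has $\alpha_\Omega=\alpha(\beta_\Omega)$. In the planar case, formula \eqref{cono2} gives $\alpha(\beta)=\pi/(2\,\arccos\beta)$, so that
\[
\alpha_\Omega=\frac{\pi}{2\,\arccos\beta_\Omega}.
\]
Note this quantity is $\ge 1$ (with equality exactly when $\beta_\Omega=0$, consistently with $\alpha(0)=1$) and is finite because $\Omega$ being Lipschitz forces $\beta_\Omega<1$ via \eqref{e:upperbound}.

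Next I would translate the dichotomy appearing in Proposition \ref{lm:weighted} into a condition on $\beta_\Omega$. Since $\arccos$ is strictly decreasing on $[0,1)$, we get the equivalences
\[
\alpha_\Omega\le 2\iff \arccos\beta_\Omega\ge\frac{\pi}{4}\iff \beta_\Omega\le\cos\frac{\pi}{4},
\]
and likewise $\alpha_\Omega>2\iff\beta_\Omega>\cos(\pi/4)$. In the regime $\alpha_\Omega>2$, the threshold exponent of Proposition \ref{lm:weighted} transforms as
\[
2-\frac{2}{\alpha_\Omega}=2-\frac{4\,\arccos\beta_\Omega}{\pi}.
\]
Applying Proposition \ref{lm:weighted} with these identifications then yields exactly the two bullets of the statement: compactness for all $1<q<2$ when $\beta_\Omega\le\cos(\pi/4)$, and compactness precisely for $2-(4/\pi)\arccos\beta_\Omega<q<2$ when $\beta_\Omega>\cos(\pi/4)$.

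There is essentially no obstacle here, since the argument is a direct substitution; the only points demanding a moment's care are keeping the direction of the inequalities straight when composing with the decreasing function $\arccos$, and checking the borderline value $\beta_\Omega=\cos(\pi/4)$, i.e. $\alpha_\Omega=2$, which is covered by the first bullet of Proposition \ref{lm:weighted} because that bullet explicitly allows $\alpha_\Omega=2$.
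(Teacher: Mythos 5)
Your proposal is correct and follows essentially the same route as the paper's own proof: it uses \eqref{cono2} to identify $\alpha_\Omega=\pi/(2\,\arccos\beta_\Omega)$, converts the condition $\alpha_\Omega\le 2$ into $\beta_\Omega\le\cos(\pi/4)$, rewrites the threshold $2-2/\alpha_\Omega$ as $2-(4/\pi)\arccos\beta_\Omega$, and then invokes Proposition \ref{lm:weighted}. The extra remarks on the borderline case and on the monotonicity of $\arccos$ are fine and do not change the argument.
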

\begin{proof}
By \eqref{cono2} we know that
\[
\alpha_\Omega=\frac{\pi}{2\,\arccos(\beta_\Omega)}.
\]
From this we get 
\[
\alpha_\Omega\le 2\qquad \Longleftrightarrow\qquad \arccos\beta\ge \frac{\pi}{4}\qquad \Longleftrightarrow\qquad \beta\le \cos\left(\frac{\pi}{4}\right),
\]
and thus the conclusion follows from Proposition \ref{lm:weighted}.
\end{proof}
The assumptions in Proposition \ref{lm:weighted} are sharp. Indeed, when these are not in force, the compactness of the embedding can badly fail, even among convex sets.
Indeed, we can produce an open bounded convex set $\Omega\subset\mathbb{R}^N$ such that
\begin{itemize}
\item $\alpha_\Omega>2$;
\vskip.2cm
\item for every $1<q<2-2/\alpha_\Omega$,
the embedding $\mathcal{D}^{1,2}_0(\Omega)\hookrightarrow L^2(\Omega; w_{\Omega,q}^{q-2})$ is not compact.
\end{itemize}
This is shown in the following
\begin{exa}
\label{exa:viola}
We used the same notations of Definition \ref{defi:cones}.
Let $\beta>0$ be such that
\begin{equation}
\label{beta0}
2\,N<\lambda(\mathcal{S}(\beta)).
\end{equation}
For every $R>0$, we consider the convex cone $\Gamma(\beta,R)$. We observe that the function 
\[
\Phi(t)=t\,(N-2+t),\qquad \text{ for }t\ge 0.
\] 
is monotone increasing and $\Phi(2)=2\,N$. By recalling the definition of $\alpha_{\Gamma(\beta,R)}$, this implies that
\[
\text{ condition }\eqref{beta0}\qquad \Longleftrightarrow\qquad \alpha_{\Gamma(\beta,R)}>2.
\]
We show that for every 
\begin{equation}
\label{q}
1<q< 2-\frac{2}{\alpha_{\Gamma(\beta,R)}},
\end{equation}
the embedding
\[
\mathcal{D}^{1,2}_0(\Gamma(\beta,R))\hookrightarrow L^2(\Gamma(\beta,R);w_{\Gamma(\beta,R),q}^{q-2}),
\] 
is not compact. 
In order to prove this, it is sufficient to observe that with our choice \eqref{q} we have
\[
\frac{2}{2-q}<\alpha_{\Gamma(\beta,R)}\qquad \text{ which implies }\qquad \Phi\left(\frac{2}{2-q}\right)<\Phi(\alpha_{\Gamma(\beta,R)})=\lambda(\mathcal{S}(\beta)).
\]
Thus the claimed assertion follows from Proposition \ref{prop:trac} below.
\end{exa}

\section{Proofs of the main results}
\label{sec:6}

\subsection{Proofs}
By combining the compactness result of Proposition \ref{lm:weighted} with Proposition \ref{lm:convergencebis}, we now get the following more general version of Theorem A. We recall that $\alpha_\Omega$ is homogeneity index of $\Omega$, defined in Definition \ref{defi:openingindex}.
\begin{teo}
\label{teo:A}
Let $\Omega\subset\mathbb{R}^N$ be an open bounded Lipschitz set, with homogeneity index $\alpha_\Omega$. Let us suppose that $\Omega$ has a finite number of connected components. If we set
\begin{equation}
\label{qomega}
q_\Omega:=\max\left\{2-\frac{2}{\alpha_\Omega},1\right\},
\end{equation}
then, for every $q_\Omega<q<2$,
the positive least energy solution $w_{\Omega,q}\in\mathcal{D}^{1,2}_0(\Omega)$ of equation \eqref{LE}
is isolated in the $L^1(\Omega)$ norm topology, i.e. there exists $\delta>0$ such that the neighborhood
\[
\mathcal{I}_\delta(w_{\Omega,q})=\Big\{\varphi\in\mathcal{D}^{1,2}_0(\Omega)\, :\, \|\varphi-w_{\Omega,q}\|_{L^1(\Omega)}< \delta\Big\},
\]
does not contain any other solution of the Lane-Emden equation.
\end{teo}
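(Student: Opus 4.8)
The plan is to obtain this statement as a direct combination of the two pillars of the paper: the abstract isolation result of Proposition \ref{lm:convergencebis} and the compactness criterion of Proposition \ref{lm:weighted}. No new idea is required; the only point deserving attention is that the admissible interval $q_\Omega<q<2$ is precisely the range of exponents for which Proposition \ref{lm:weighted} produces a compact weighted embedding, so the whole proof reduces to verifying this bookkeeping and then invoking the two results.

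First I would fix $q$ with $q_\Omega<q<2$ and unwind the definition \eqref{qomega} of $q_\Omega$. Since $\Omega$ is a bounded Lipschitz set, $\alpha_\Omega=\alpha(\beta_\Omega)$ is finite and, by the monotonicity of $\beta\mapsto\alpha(\beta)$ together with $\alpha(0)=1$, one has $\alpha_\Omega\in[1,+\infty)$; in particular $q_\Omega$ is well defined and $q_\Omega<2$. If $\alpha_\Omega\le 2$, then $2-2/\alpha_\Omega\le 1$, hence $q_\Omega=1$ and $q$ is an arbitrary exponent in $(1,2)$; if instead $\alpha_\Omega>2$, then $q_\Omega=2-2/\alpha_\Omega$ and $q\in(2-2/\alpha_\Omega,2)$. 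In either case $q$ falls within the range covered by Proposition \ref{lm:weighted}, so the embedding $\mathcal{D}^{1,2}_0(\Omega)\hookrightarrow L^2(\Omega;w_{\Omega,q}^{q-2})$ is compact, that is, hypothesis \eqref{ipotesicroce} holds.

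At this stage Proposition \ref{lm:convergencebis} applies and gives $\delta>0$ such that $\mathcal{I}_\delta(w_{\Omega,q})$ contains no solution of \eqref{LE} distinct from $w_{\Omega,q}$ itself; since $w_{\Omega,q}$ manifestly belongs to $\mathcal{I}_\delta(w_{\Omega,q})$, this is exactly the claimed isolation ``no \emph{other} solution'' in the $L^1(\Omega)$ topology. I would also observe that the assumption on the number of connected components is in fact not an extra hypothesis: a bounded Lipschitz set has compact boundary, hence a finite atlas, hence finitely many components; alternatively, finiteness is automatically recovered from the compactness of the embedding through Lemma \ref{lm:componenti}.

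Since the argument is merely a combination of facts already established, there is no genuine obstacle here; the single point one should double-check is the matching, carried out in the second step, between the piecewise definition \eqref{qomega} of $q_\Omega$ and the dichotomy $\alpha_\Omega\le 2$ versus $\alpha_\Omega>2$ appearing in the statement of Proposition \ref{lm:weighted}.
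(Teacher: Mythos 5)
Your proposal is correct and is essentially the paper's own proof: Theorem \ref{teo:A} is obtained precisely by checking that $q_\Omega<q<2$ matches the two cases of Proposition \ref{lm:weighted} (so that the weighted embedding is compact) and then invoking Proposition \ref{lm:convergencebis}. The side remarks on the finiteness of the connected components are harmless and consistent with Lemma \ref{lm:componenti}.
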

\begin{oss}
We recall that for a $C^1$ set, we have $\alpha_\Omega=1$. Thus in this case from \eqref{qomega} we get
\[
q_\Omega=1,
\]
and we recover the statement of Theorem A. More generally, observe that $q_\Omega=1$ whenever $\alpha_\Omega\le 2$.
\end{oss}
As for the claimed isolation result of Theorem B, this follows by combining Proposition \ref{lm:weighted} with Proposition \ref{prop:bis}. The final outcome is again slightly more general, as we can admit Lipschitz sets. We still indicate by $q_\Omega$ the exponent defined in \eqref{qomega}.
\begin{teo}
\label{teo:B}
Let $\Omega\subset\mathbb{R}^N$ be an open bounded Lipschitz set, with homogeneity index $\alpha_\Omega$. Let us suppose that $\Omega$ has a finite number of connected components. Then, for every $q_\Omega<q<2$,
the first $q-$eigenvalue 
\[
\lambda_1(\Omega;q)=\min_{u\in\mathcal{D}^{1,2}_0(\Omega)} \left\{\int_\Omega |\nabla u|^2\,dx\, :\, \int_\Omega |u|^q\,dx=1\right\},
\]
is isolated in $\mathrm{Spec}(\Omega;q)$.
\end{teo}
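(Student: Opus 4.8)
The plan is to assemble Theorem \ref{teo:B} from the two workhorses already proved in the preceding sections, so the proof itself reduces to a short bookkeeping argument about the admissible range of $q$. Concretely, the only thing to check is that for $q_\Omega<q<2$ the weighted embedding hypothesis \eqref{ipotesicroce} is in force; once this is secured, Proposition \ref{prop:bis} delivers the conclusion verbatim.

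First I would unwind the definition \eqref{qomega} of $q_\Omega$ and split into two cases according to the size of the homogeneity index $\alpha_\Omega$. If $\alpha_\Omega\le 2$, then $2-2/\alpha_\Omega\le 1$, hence $q_\Omega=1$ and the range $q_\Omega<q<2$ is simply $1<q<2$; the first bullet of Proposition \ref{lm:weighted} then guarantees that $\mathcal{D}^{1,2}_0(\Omega)\hookrightarrow L^2(\Omega;w_{\Omega,q}^{q-2})$ is compact for every such $q$. If instead $\alpha_\Omega>2$, then $q_\Omega=2-2/\alpha_\Omega$, and the condition $q_\Omega<q<2$ is exactly the interval appearing in the second bullet of Proposition \ref{lm:weighted}, which again yields compactness of the embedding. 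In both cases condition \eqref{ipotesicroce} holds. (The standing assumption that $\Omega$ has finitely many connected components is consistent with, and indeed forced by, this compactness through Lemma \ref{lm:componenti}; for a bounded Lipschitz set it is in any case automatic, since $\partial\Omega$ admits a finite atlas. So it requires no separate verification here.)

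Then I would invoke Proposition \ref{prop:bis} with this $\Omega$ and this $q$: since \eqref{ipotesicroce} is satisfied, the first $q$-eigenvalue $\lambda_1(\Omega;q)$ is isolated in $\mathrm{Spec}(\Omega;q)$, which is precisely the statement to be proved. Combined with the closedness of $\mathrm{Spec}(\Omega;q)$ recalled in the introduction, this also immediately gives the companion statement that $\lambda_2(\Omega;q)$ is attained and strictly larger than $\lambda_1(\Omega;q)$.

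I do not expect any real obstacle in this final step: all the analytic difficulty has been absorbed into the earlier results — the Green-function lower bound $w_{\Omega,q}\gtrsim d_\Omega^{\alpha}$ of Corollary \ref{coro:fiesta}, the Hardy-type interpolation behind Proposition \ref{lm:weighted}, and the blow-up/linearization scheme around $w_{\Omega,q}$ underlying Proposition \ref{lm:convergencebis} and hence Proposition \ref{prop:bis}. The one point deserving a line of care is to apply the correct bullet of Proposition \ref{lm:weighted} in the two regimes $\alpha_\Omega\le 2$ and $\alpha_\Omega>2$, and to note that for $C^1$ sets one has $\alpha_\Omega=1$, so $q_\Omega=1$ and the result holds for the whole range $1<q<2$, recovering the statement of Theorem B as announced.
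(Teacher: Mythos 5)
Your proposal is correct and matches the paper's argument exactly: the paper proves Theorem \ref{teo:B} precisely by combining Proposition \ref{lm:weighted} (compactness of the weighted embedding for $q_\Omega<q<2$, via the same two-case split on $\alpha_\Omega$) with Proposition \ref{prop:bis}. Nothing further is needed.
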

Finally, for ease of exposition, we find it useful to state the previous results for $N=2$. Here, the interplay between the cone index $\beta_\Omega$ and the exponent $q_\Omega$ is cleaner. Indeed, by recalling \eqref{cono2}, we have
\[
\alpha_\Omega=\frac{\pi}{2\,\arccos(\beta_\Omega)},
\]
thus we get the following
\begin{coro}[Two dimensional case]
Let $\Omega\subset\mathbb{R}^2$ be an open bounded Lipschitz set with cone index $\beta_\Omega$. The conclusions of Theorems \ref{teo:A} and \ref{teo:B} hold for every 
\[
\max\left\{2-\frac{4}{\pi}\,\arccos(\beta_\Omega),\,1\right\}<q<2.
\]
\end{coro}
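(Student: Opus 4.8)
The plan is to deduce this statement directly from Theorems \ref{teo:A} and \ref{teo:B} by specializing the dimension to $N=2$ and rewriting the exponent $q_\Omega$ in terms of the cone index $\beta_\Omega$. First I would invoke the explicit two-dimensional computation \eqref{cono2}, which gives
\[
\alpha(\beta)=\frac{\pi}{2\,\arccos\beta}\qquad\text{for every }0\le\beta<1,
\]
together with the definition of the homogeneity index, namely $\alpha_\Omega=\alpha(\beta_\Omega)$ (recall that $\beta\mapsto\alpha(\beta)$ is increasing, so the infimum defining $\alpha_\Omega$ is attained at $\beta_\Omega$). This already yields
\[
\alpha_\Omega=\frac{\pi}{2\,\arccos(\beta_\Omega)}.
\]

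Next I would substitute this identity into the formula \eqref{qomega} for the limiting exponent. Since $2/\alpha_\Omega=(4/\pi)\,\arccos(\beta_\Omega)$, we obtain
\[
q_\Omega=\max\left\{2-\frac{2}{\alpha_\Omega},\,1\right\}=\max\left\{2-\frac{4}{\pi}\,\arccos(\beta_\Omega),\,1\right\},
\]
which is precisely the lower endpoint appearing in the statement to be proved. At this point the corollary follows verbatim: for every $q$ in the stated range, Theorem \ref{teo:A} gives that the positive least energy solution $w_{\Omega,q}$ is isolated in the $L^1(\Omega)$ topology, and Theorem \ref{teo:B} gives that $\lambda_1(\Omega;q)$ is isolated in $\mathrm{Spec}(\Omega;q)$.

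There is essentially no genuine obstacle here, as this is a bookkeeping consequence of the general $N$-dimensional results; the only point requiring a word of care is making sure the hypotheses of Theorems \ref{teo:A} and \ref{teo:B} are met, i.e.\ that $\Omega\subset\mathbb{R}^2$ is open, bounded, Lipschitz, and has finitely many connected components — all of which are part of the corollary's hypotheses — and that the two-dimensional value of $\alpha(\beta)$ used is the one computed in Definition \ref{defi:cones}, which amounts to recalling that the first Dirichlet eigenvalue of $-\varphi''$ on $(-\arccos\beta,\arccos\beta)$ equals $(\pi/(2\arccos\beta))^2$. With these observations in place the proof is a one-line substitution followed by citing the two theorems.
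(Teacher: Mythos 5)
Your proposal is correct and follows exactly the paper's own route: the corollary is obtained by substituting the two-dimensional formula $\alpha_\Omega=\pi/(2\arccos(\beta_\Omega))$ from \eqref{cono2} into the definition \eqref{qomega} of $q_\Omega$ and then invoking Theorems \ref{teo:A} and \ref{teo:B}. The only cosmetic remark is that the finiteness of the number of connected components is not written explicitly in the corollary's hypotheses (it is inherited from the two theorems), but this does not affect the validity of your argument.
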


\subsection{Non-negative solutions with higher energy}
In the next example we show that when $\Omega$ is not connected, Theorem A {\it can not} be extended to non-negative solutions not having least energy (recall Remark \ref{oss:multiple}). This is similar to the example of \cite[Theorem 3.1]{BFcontro}.
\begin{exa}
\label{exa:next}
Let $1<q<2$ and let $\Omega=\Omega_1\cup \Omega_2$, with $\Omega_1,\Omega_2\subset\mathbb{R}^N$ open bounded sets with smooth boundary, such that $\Omega_1\cap \Omega_2=\emptyset$. Then there exists a sequence $\{U_n\}_{n\in\mathbb{N}}\subset\mathcal{D}^{1,2}_0(\Omega)$ of distinct solutions of the Lane-Emden equation \eqref{LE} and a positive solution $U$ of the same equation, such that 
\[
\lim_{n\to\infty} \|\nabla U_n-\nabla U\|_{L^2(\Omega)}=0.
\]
We start by taking the positive least energy solution $w_{\Omega_1,q}$ of $\Omega_1$. We consider it to be extended by $0$ on the whole $\Omega$. We then take $u_n\in\mathcal{D}^{1,2}_0(\Omega_2)$ to be a $q-$eigenfunction of $\Omega_2$ with unit $L^q$ norm, associated to the $n-$th variational eigenvalue $\lambda_{n,LS}(\Omega_2;q)$ of $\Omega_2$ (recall the definition \eqref{CFWLS}). Again, we consider it to be extended by $0$ on the whole $\Omega$. We then set
\[
U_n=w_{\Omega_1,q}+\lambda_{n,LS}(\Omega_2;q)^{\frac{1}{q-2}}\,u_n,
\]
which solves, by construction
\[
-\Delta U_n=|U_n|^{q-2}\,U_n,\qquad \text{ in }\Omega.
\]
By recalling that $\lambda_{n,LS}(\Omega_2;q)$ diverges to $+\infty$ as $n$ goes to $\infty$ and using that $2-q<2$, we then obtain
\[
\begin{split}
\lim_{n\to\infty} \|\nabla U_n-\nabla w_{\Omega_1,q}\|_{L^2(\Omega)}&=\lim_{n\to\infty} \lambda_{n,LS}(\Omega_2;q)^{\frac{1}{q-2}}\,\|\nabla u_n\|_{L^2(\Omega)}\\
&=\lim_{n\to\infty} \lambda_{n,LS}(\Omega_2;q)^{\frac{1}{q-2}+\frac{1}{2}}=0,
\end{split}
\]
which is the desired conclusion. 
\end{exa}

\appendix 
\section{A pointwise inequality}
The following simple inequality has been useful in order to prove our main result.
\begin{lm}
\label{lm:quasilipschitz}
Let $0<\alpha<1$, then for every $a>0$ and $b\in\mathbb{R}$ we have 
\[
|a^\alpha-|b|^{\alpha-1}\,b|\le 2^{1-\alpha}\,a^{\alpha-1}\,|a-b|.
\]
\end{lm}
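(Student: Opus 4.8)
The plan is to exploit the scaling invariance of the inequality and then argue by cases on the sign of $b$. Both sides are positively homogeneous of degree $\alpha$ under the simultaneous rescaling $(a,b)\mapsto(\lambda a,\lambda b)$, $\lambda>0$: the left-hand side gets multiplied by $\lambda^{\alpha}$, and the right-hand side by $\lambda^{\alpha-1}\cdot\lambda=\lambda^{\alpha}$ as well. Choosing $\lambda=1/a$, it therefore suffices to treat $a=1$. Throughout I write $f(t)=|t|^{\alpha-1}t=\operatorname{sgn}(t)\,|t|^{\alpha}$ for the continuous signed power (with the natural convention $f(0)=0$), so that the claim reduces to showing $|f(1)-f(b)|\le 2^{1-\alpha}\,|1-b|$ for every $b\in\mathbb{R}$.

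For $b\ge 0$ the estimate holds even with constant $1$, and this is the trivial case. Since $0<\alpha<1$, one has $b^{\alpha}\ge b$ for $0\le b\le 1$ and $b^{\alpha}\le b$ for $b\ge 1$. Hence for $0\le b\le 1$ we get $|1-b^{\alpha}|=1-b^{\alpha}\le 1-b=|1-b|$, while for $b\ge 1$ we get $|1-b^{\alpha}|=b^{\alpha}-1\le b-1=|1-b|$; in both cases $|1-b^{\alpha}|\le|1-b|\le 2^{1-\alpha}\,|1-b|$.

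The only case requiring real attention --- and the one forcing the constant $2^{1-\alpha}$ in place of $1$ --- is $b\le 0$. Setting $s=|b|=-b\ge 0$ we have $f(b)=-s^{\alpha}$ and $1-b=1+s$, so I must prove $1+s^{\alpha}\le 2^{1-\alpha}\,(1+s)$ for all $s\ge 0$. A crude bound such as $s^{\alpha}\le\max\{1,s\}$ does not give the right constant, so instead I would invoke the concavity of $t\mapsto t^{\alpha}$ on $[0,\infty)$ at the midpoint of $1$ and $s$: $\tfrac12\big(1^{\alpha}+s^{\alpha}\big)\le\big(\tfrac{1+s}{2}\big)^{\alpha}$, i.e.\ $1+s^{\alpha}\le 2^{1-\alpha}\,(1+s)^{\alpha}$. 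Since $1+s\ge 1$ and $\alpha\le 1$ imply $(1+s)^{\alpha}\le 1+s$, the desired bound follows. Combining the two cases and undoing the rescaling gives the inequality as stated; the whole proof is elementary, the single delicate point being the two-step concavity argument in the case $b\le 0$.
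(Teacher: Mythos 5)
Your proof is correct and follows essentially the same route as the paper's: after your (harmless) normalization to $a=1$ via homogeneity, your treatment of $0\le b\le 1$ and $b\ge 1$ amounts to the paper's comparison of $t^\alpha$ with $t$ on either side of $1$, and your midpoint-concavity estimate for $b\le 0$, followed by $(1+s)^\alpha\le 1+s$, is exactly the paper's argument $a^\alpha+(-b)^\alpha\le 2^{1-\alpha}(a-b)^\alpha$ combined with $(a-b)^{\alpha-1}\le a^{\alpha-1}$.
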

\begin{proof}
We first suppose that $a\ge b\ge 0$, then we write (recall that $a>0$)
\[
\begin{split}
|a^\alpha-|b|^{\alpha-1}\,b|=a^\alpha-b^\alpha&=a^\alpha\,\left(1-\left(\frac{b}{a}\right)^\alpha\right)\\
&\le a^\alpha\,\left(1-\frac{b}{a}\right)=a^{\alpha-1}\,(a-b),
\end{split} 
\]
where we used that $t\le t^\alpha$ for every $0\le t\le 1$.
\par
We now suppose that $b>a>0$, then by proceeding as before, we find
\[
|a^\alpha-|b|^{\alpha-1}\,b|=b^\alpha-a^\alpha\le b^{\alpha-1}\,(b-a).
\]
By observing that the power $\alpha-1$ is negative and using the hypothesis $b>a>0$, we prove the inequality in this case, as well.
\par
Finally, we suppose that $a>0\ge b$. In this case, by using the concavity of the map $t\mapsto t^\alpha$, we obtain
\[
|a^\alpha-|b|^{\alpha-1}\,b|=a^\alpha+(-b)^{\alpha}\le 2^{1-\alpha}\,(a-b)^\alpha=2^{1-\alpha}\,(a-b)^{\alpha-1}\,(a-b).
\]
Since $b$ is negative and $\alpha-1<0$, we can further use that $(a-b)^{\alpha-1}\le a^{\alpha-1}$ and get the desired conclusion.
\end{proof}

\section{A uniform $L^\infty$ estimate}

For $1<q<2$, solutions of the Lane-Emden equation enjoys the following {\it universal} global $L^\infty$ estimate. This fact should be well-known, it is mentioned for example in \cite[page 149]{Da1988}. We provide for completeness a precise estimate, under optimal assumptions on the set.
\begin{prop}
\label{prop:universal}
Let $1<q<2$ and let $\Omega\subset\mathbb{R}^N$ be an open set, such that
\[
\lambda_1(\Omega;q)=\min_{u\in\mathcal{D}^{1,2}_0(\Omega)} \left\{\int_\Omega |\nabla u|^2\,dx\, :\, \int_\Omega |u|^q\,dx=1\right\}>0.
\] 
For every solution $u\in \mathcal{D}^{1,2}_0(\Omega)$ of the Lane-Emden equation \eqref{LE}, 
we have $u\in L^\infty(\Omega)$, with the universal estimate
\[
\|u\|_{L^\infty(\Omega)}\le \left\{\begin{array}{ll}
C_{N,q}\,\lambda_1(\Omega;q)^{\frac{q}{2\,(q-2)}\,\frac{2^*-2}{2^*-q}},& \text{ if } N\ge 3,\\
&\\
C_q\,\lambda_1(\Omega;q)^\frac{q}{2\,(q-2)},& \text{ if } N=2.
\end{array}
\right.
\]
\end{prop}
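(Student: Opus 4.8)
The plan is to first prove a power-type bound $\|u\|_{L^\infty(\Omega)}\le C_{N,q}\,\|u\|_{L^q(\Omega)}^{\,\mu}$ with a precise exponent $\mu$, by a Moser iteration that crucially exploits the \emph{sublinearity} $1<q<2$ (the soft reason behind the $L^\infty$ bound being the comparison $\|u\|_{L^\infty(\Omega)}\le\|\Phi_\Omega\|_{L^\infty(\Omega)}^{1/(2-q)}$ with the torsion function $\Phi_\Omega$, which uses only $q-1<1$ — but this does not by itself yield the sharp power of $\lambda_1$). Then one controls $\|u\|_{L^q(\Omega)}$ through $\lambda_1(\Omega;q)$: testing \eqref{LE} with $u$ gives $\int_\Omega|\nabla u|^2\,dx=\int_\Omega|u|^q\,dx$, while the variational characterization of $\lambda_1(\Omega;q)$ yields $\lambda_1(\Omega;q)\,\|u\|_{L^q(\Omega)}^2\le\int_\Omega|\nabla u|^2\,dx$, so that
\[
\int_\Omega|u|^q\,dx=\int_\Omega|\nabla u|^2\,dx\le\lambda_1(\Omega;q)^{-\frac{q}{2-q}}.
\]
Note that no assumption on $|\Omega|$ enters here, only $\lambda_1(\Omega;q)>0$.

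For the $L^\infty$ bound I would run a Moser iteration on $u$ (taking $N\ge3$ first). For $\gamma\ge1$, test \eqref{LE} with $|u|^{2\gamma-2}u$ — rigorously with the truncations $\min\{|u|,L\}^{2\gamma-2}u$, letting $L\to+\infty$ — and use $\nabla(|u|^{\gamma-1}u)=\gamma\,|u|^{\gamma-1}\nabla u$ together with the Sobolev inequality to obtain
\[
\big\|\,|u|^{\gamma}\,\big\|_{L^{2^*}(\Omega)}^{2}\le C_N\,\frac{\gamma^2}{2\gamma-1}\int_\Omega|u|^{2\gamma-2+q}\,dx\le C_N\,\gamma\int_\Omega|u|^{2\gamma-2+q}\,dx.
\]
The structural point is that, since $q<2$, the exponent $2\gamma-2+q$ on the right is strictly below $2\gamma$, hence it may be read as a \emph{previous} integrability level rather than the current one. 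Accordingly, set $p_0=q$, $\gamma_k=\tfrac12\,(p_k+2-q)$ and $p_{k+1}=2^*\,\gamma_k=\tfrac{2^*}{2}\,(p_k+2-q)$; then $\gamma_0=1$ and $p_1=2^*$ (the first step being exactly Sobolev's inequality combined with $\int|\nabla u|^2=\int|u|^q$), and the display above becomes the \emph{closed} recursion
\[
\|u\|_{L^{p_{k+1}}(\Omega)}\le\big(C_N\,\gamma_k\big)^{\frac{1}{2\gamma_k}}\,\|u\|_{L^{p_k}(\Omega)}^{\frac{p_k}{2\gamma_k}},
\]
with no Young inequality or absorption argument needed. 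Since $2^*>2$, the affine recursion $p_{k+1}=\tfrac{2^*}{2}p_k+\tfrac{2^*}{2}(2-q)$ drives $p_k\to+\infty$ geometrically, and a standard bootstrap shows all these norms are finite.

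It remains to iterate the recursion and pass to the limit $k\to+\infty$; this bookkeeping is the only delicate point. Using $2\gamma_k=p_{k+1}/(2^*/2)$, the accumulated exponent on $\|u\|_{L^q(\Omega)}$ telescopes:
\[
\prod_{k\ge0}\frac{p_k}{2\gamma_k}=\lim_{K\to\infty}\Big(\frac{2^*}{2}\Big)^{K}\frac{p_0}{p_K}=\frac{q\,(2^*-2)}{2\,(2^*-q)},
\]
since $p_K\sim(2^*/2)^{K}\cdot\frac{2(2^*-q)}{2^*-2}$; and the accumulated constant converges, because in the limit the powers of $(C_N\gamma_k)^{1/(2\gamma_k)}$ decay geometrically in $k$ while $\log(C_N\gamma_k)$ grows only linearly. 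This produces
\[
\|u\|_{L^\infty(\Omega)}\le C_{N,q}\,\|u\|_{L^q(\Omega)}^{\frac{q(2^*-2)}{2(2^*-q)}}=C_{N,q}\Big(\int_\Omega|u|^q\,dx\Big)^{\frac{2^*-2}{2(2^*-q)}},
\]
and inserting the bound from the first paragraph gives precisely the exponent $\tfrac{q}{2(q-2)}\cdot\tfrac{2^*-2}{2^*-q}$ on $\lambda_1(\Omega;q)$. For $N=2$ the same scheme applies with $2^*$ replaced by an arbitrary finite $m$ (via $\mathcal{D}^{1,2}_0(\Omega)\hookrightarrow L^m(\Omega)$), giving the exponent $\tfrac{q}{2(q-2)}\cdot\tfrac{m-2}{m-q}$; letting $m\to+\infty$ recovers the stated exponent $\tfrac{q}{2(q-2)}$, the constants staying bounded in this limit. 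I expect this last bookkeeping — tracking exponents and constants through infinitely many steps, plus the truncation that makes the test functions admissible — to be the main, if routine, obstacle; everything else is elementary.
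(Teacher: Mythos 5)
Your overall architecture is exactly the paper's: first an $L^\infty$--$L^q$ estimate of the form $\|u\|_{L^\infty(\Omega)}\le C\,\|u\|_{L^q(\Omega)}^{\mu}$ with the correct exponent $\mu=\frac{q}{2}\,\frac{2^*-2}{2^*-q}$, then the identical closing step (energy identity $\int|\nabla u|^2=\int|u|^q$ plus the definition of $\lambda_1(\Omega;q)$ to get $\|u\|_{L^q(\Omega)}\le\lambda_1(\Omega;q)^{1/(q-2)}$). The only difference is that the paper obtains the first step by citing \cite[Proposition 2.5]{BF}, whereas you re-derive it by a Moser iteration tailored to the sublinear right-hand side. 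For $N\ge 3$ your iteration is correct: the recursion $p_{k+1}=\tfrac{2^*}{2}(p_k+2-q)$ closes because $2\gamma_k-2+q=p_k$, the telescoping $\prod_k p_k/(2\gamma_k)=\lim_K(2^*/2)^K p_0/p_K=\frac{q(2^*-2)}{2(2^*-q)}$ is right, the accumulated constant converges, and the Sobolev constant is $\Omega$-independent, so the bound is genuinely universal. This is a self-contained and slightly more informative argument than the paper's citation.

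The case $N=2$ is where you have a genuine gap. You replace $2^*$ by a finite $m$ ``via $\mathcal{D}^{1,2}_0(\Omega)\hookrightarrow L^m(\Omega)$'' and then let $m\to+\infty$, asserting that the constants stay bounded. Two problems: first, for $N=2$ there is no $\Omega$-independent constant in that embedding (it must be routed through $|\Omega|$, or through Gagliardo--Nirenberg and hence through $\|u\|_{L^q}$ or $\lambda_1(\Omega;q)$), so as written the bound would not be universal; second, the accumulated power of the embedding constant $C_m$ in your iteration tends to $\frac{m}{2(m-q)}\to\frac12$, while $C_m$ itself necessarily blows up as $m\to\infty$ (roughly like $\sqrt{m}$, by Moser--Trudinger; a uniform bound would give $\mathcal{D}^{1,2}_0\hookrightarrow L^\infty$, which is false). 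So the limit $m\to\infty$ cannot be taken naively, and stopping at a fixed finite $m$ gives the exponent $\frac{q}{2(q-2)}\frac{m-2}{m-q}$, which neither implies nor is implied by the stated exponent $\frac{q}{2(q-2)}$ without knowing whether $\lambda_1(\Omega;q)\gtrless 1$. The paper's cited estimate resolves exactly this by carrying an explicit extra factor $\sqrt{\lambda/\lambda_1(\Omega;q)}$ in dimension two, which after substituting $\lambda=\|u\|_{L^q}^{q-2}$ produces the exponent $-\tfrac12+\tfrac{q-1}{q-2}=\tfrac{q}{2(q-2)}$. Your two-dimensional argument needs an analogous device (for instance, tracking the $\lambda_1^{-q/(2m)}$ factor coming from Gagliardo--Nirenberg through the iteration) rather than a bare limit in $m$.
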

\begin{proof}
By setting $\lambda=\|u\|_{L^q(\Omega)}^{q-2}$, we see that $u$ solves
\[
-\Delta u=\lambda\,\|u\|_{L^q(\Omega)}^{2-q}\,|u|^{q-2}\,u.
\]
We can then apply the estimate of \cite[Proposition 2.5]{BF} and obtain
\[
\|u\|_{L^\infty(\Omega)}\le \left\{\begin{array}{ll}
C_{N,q}\,\Big(\sqrt{\lambda}\Big)^\frac{2^*}{2^*-q}\,\|u\|_{L^q(\Omega)},& \text{ if } N\ge 3,\\
&\\
C_q\,\sqrt{\dfrac{\lambda}{\lambda_1(\Omega;q)}}\,\sqrt{\lambda}\,\|u\|_{L^q(\Omega)},& \text{ if } N=2.
\end{array}
\right.
\]
By recalling the definition of $\lambda$, we obtain the $L^\infty-L^q$ estimate
\[
\|u\|_{L^\infty(\Omega)}\le \left\{\begin{array}{ll}
C_{N,q}\,\|u\|_{L^q(\Omega)}^{\frac{q}{2}\,\frac{2^*-2}{2^*-q}},& \text{ if } N\ge 3,\\
&\\
C_q\,\sqrt{\dfrac{1}{\lambda_1(\Omega;q)}}\,\|u\|^{q-1}_{L^q(\Omega)},& \text{ if } N=2.
\end{array}
\right.
\]
We only need to show that the $L^q$ norm admits a universal estimate. For this, from the equation we have the energy identity
\[
\int_\Omega |\nabla u|^2\,dx=\int_\Omega |u|^q\,dx.
\]
By using the definition of $\lambda_1(\Omega;q)$, this entails that
\[
\lambda_1(\Omega;q)\,\left(\int_\Omega |u|^q\,dx\right)^\frac{2}{q}\le \int_\Omega |u|^q\,dx.
\]
By using that $2/q>1$, we obtain the desired conclusion.
\end{proof}
\begin{oss}
The previous estimate guarantees that the $L^\infty$ norm of a solution of the Lane-Emden equation can be controlled from above in terms of a (negative) power of the sharp Poincar\'e-Sobolev constant $\lambda_1(\Omega;q)$. This estimate can not be reversed. Indeed, by taking the ``slab--type'' sequence
\[
\Omega_n=(-n,n)^{N-1}\times(-1,1),
\]
we know that the positive least energy solution $w_{\Omega_n,q}$ can be bounded uniformly in $L^\infty(\Omega)$, see \cite[Proposition 4.3]{BFR}. On the other hand, we have
\[
\lim_{n\to\infty} \lambda_1(\Omega_n;q)=0.
\]
\end{oss}

The previous result permits to infer that on the space of the solutions of the Lane-Emden equation in $\Omega$, the $L^1(\Omega)$ strong topology and the $\mathcal{D}^{1,2}_0(\Omega)$ strong topology are actually equivalent.
\begin{coro}
\label{coro:accesso}
Let $1<q<2$ and let $\Omega\subset\mathbb{R}^N$ be an open set with finite measure.
There exists a constant $C>0$ depending on $N,q$ and $\lambda_1(\Omega;q)$ only, such that for every pair $u,v$ of solutions of the Lane-Emden equation \eqref{LE}, we have
\[
\|\nabla u-\nabla v\|_{L^2(\Omega)}\le C\, \sqrt{\|u-v\|_{L^1(\Omega)}}.
\]
\end{coro}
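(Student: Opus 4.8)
The statement is really a soft consequence of the \emph{universal} $L^\infty$ bound of Proposition \ref{prop:universal} together with the weak formulation of \eqref{LE}, so I do not expect any serious obstacle; the point is just to recognize that testing the difference of the two equations with $u-v$ turns the nonlinearity into a bounded multiplication operator. First I would observe that an open set $\Omega$ of finite measure has $\lambda_1(\Omega;q)>0$: for $N\ge 3$ this follows from the Sobolev embedding $\mathcal{D}^{1,2}_0(\Omega)\hookrightarrow L^{2^*}(\Omega)$ composed with H\"older's inequality $L^{2^*}(\Omega)\hookrightarrow L^q(\Omega)$ (valid since $q<2^*$ and $|\Omega|<+\infty$), while for $N=2$ one combines Faber--Krahn (which gives the Poincar\'e inequality $\mathcal{D}^{1,2}_0(\Omega)\hookrightarrow L^2(\Omega)$ with a constant controlled by $|\Omega|$) with H\"older $L^2(\Omega)\hookrightarrow L^q(\Omega)$. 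Hence Proposition \ref{prop:universal} applies and yields a constant $M=M(N,q,\lambda_1(\Omega;q))$ with $\|w\|_{L^\infty(\Omega)}\le M$ for \emph{every} solution $w\in\mathcal{D}^{1,2}_0(\Omega)$ of \eqref{LE}; in particular $\|u\|_{L^\infty(\Omega)}\le M$ and $\|v\|_{L^\infty(\Omega)}\le M$.

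Next, I would subtract the two equations and test the result with $u-v\in\mathcal{D}^{1,2}_0(\Omega)$, which is an admissible test function, to obtain the identity
\[
\int_\Omega |\nabla u-\nabla v|^2\,dx=\int_\Omega \bigl(|u|^{q-2}u-|v|^{q-2}v\bigr)\,(u-v)\,dx.
\]
Since $1<q<2$, the map $r\mapsto |r|^{q-2}r$ sends $[-M,M]$ into $[-M^{q-1},M^{q-1}]$, so pointwise on $\Omega$ one has $\bigl||u|^{q-2}u-|v|^{q-2}v\bigr|\le 2\,M^{q-1}$, and therefore
\[
\bigl(|u|^{q-2}u-|v|^{q-2}v\bigr)\,(u-v)\le 2\,M^{q-1}\,|u-v|\qquad\text{a.e. in }\Omega.
\]
(One could equally invoke the H\"older continuity estimate $||r|^{q-2}r-|s|^{q-2}s|\le C_q|r-s|^{q-1}$ and then interpolate $\int|u-v|^q\le\|u-v\|_{L^\infty}^{q-1}\|u-v\|_{L^1}$, but the crude bound above already suffices.)

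Finally, I would integrate the last inequality to get
\[
\int_\Omega |\nabla u-\nabla v|^2\,dx\le 2\,M^{q-1}\,\|u-v\|_{L^1(\Omega)},
\]
and take square roots, which gives the claim with $C=\sqrt{2}\,M^{(q-1)/2}$, a constant depending only on $N$, $q$ and $\lambda_1(\Omega;q)$ through $M$. The only mildly delicate point is the preliminary remark that $\lambda_1(\Omega;q)>0$ under the sole assumption $|\Omega|<+\infty$, needed to legitimately invoke Proposition \ref{prop:universal}; everything else is a one-line test-function computation.
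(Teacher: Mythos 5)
Your proof is correct and follows essentially the same route as the paper: subtract the two equations, test with $u-v$, bound the nonlinearity pointwise via the universal $L^\infty$ estimate of Proposition \ref{prop:universal}, and take square roots. The only addition is your preliminary verification that $|\Omega|<+\infty$ forces $\lambda_1(\Omega;q)>0$ (so that Proposition \ref{prop:universal} applies), a point the paper leaves implicit; this is a sound and welcome remark, not a different argument.
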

\begin{proof}
By subtracting the equations satisfied by $u$ and $v$, we get
\[
\int_\Omega \langle \nabla(u-v),\nabla \varphi\rangle\,dx=\int_\Omega (|u|^{q-2}\,u-|v|^{q-2}\,v)\,\varphi\,dx,
\]
for every $\varphi\in \mathcal{D}^{1,2}_0(\Omega)$. 
We use this identity with $\varphi=u-v$, so to get
\[
\begin{split}
\int_\Omega |\nabla u-\nabla v|^2\,dx&=\int_\Omega (|u|^{q-2}\,u-|v|^{q-2}\,v)\,(u-v)\,dx\\
&\le \int_\Omega (|u|^{q-1}+|v|^{q-1})\,|u-v|\,dx\\
&\le \left(\|u\|_{L^\infty(\Omega)}^{q-1}+\|v\|_{L^\infty(\Omega)}^{q-1}\right)\,\|u-v\|_{L^1(\Omega)}.
\end{split}
\]
If we now use the uniform $L^\infty$ estimate of Proposition \ref{prop:universal}, we get the desired conclusion.
\end{proof}

\section{Defect of compactness in convex cones}

In this section, we show that the embedding 
\[
\mathcal{D}^{1,2}_0(\Omega)\hookrightarrow L^2(\Omega;w_{\Omega,q}^{q-2}),
\] 
fails to be compact in a narrow convex cone. For completeness, we will make a more refined analysis, aiming at identifying the energy levels at which the loss of compactness occurs. We will see that this is linked to the exact determination of a Hardy-type sharp constant.
\vskip.2cm\noindent
Throughout this section, we still use the notation of Definition \ref{defi:cones} and still set
\[
\Phi(t)=t\,(N-2+t),\qquad \text{ for }t\ge 0.
\] 
Recall that this is a monotone increasing function. Then the main outcome of this appendix will be the following
\begin{prop}
\label{prop:trac}
Let $1<q<2$ and let $0\le \beta<1$ be such that
\[
\Phi\left(\frac{2}{2-q}\right)<\lambda(\mathcal{S}(\beta)).
\]
For every $0<R<+\infty$, we define the ``concentration energy at the tip'' of the convex cone $\Gamma(\beta,R)$ as the quantity
\begin{equation}
\label{tips}
\inf_{\varphi\in C^\infty_0(\Gamma(\beta,R))} \left[\lim_{n\to\infty} \frac{\displaystyle \int_{\Gamma(\beta,R)}|\nabla \varphi_n|^2\,dx}{\displaystyle\int_{\Gamma(\beta,R)}|\varphi_n|^2\,w_{\Gamma(\beta,R),q}^{q-2}\,dx}\right],
\end{equation}
where for every $\varphi\in C^\infty_0(\Gamma(\beta,R))$, we set
\[
\varphi_n(x)=n^\frac{N-2}{2}\,\varphi(n\,x),\qquad n\in\mathbb{N}.
\]
Then we have: 
\begin{itemize}
\item[1)] the value \eqref{tips} does not depend on $R$, we indicate it by $\mathcal{C}_q(\beta)$. Moreover, we have $\mathcal{C}_q(\beta)>1$ and 
\[
\lim_{\beta\to 1^-} \mathcal{C}_q(\beta)=1;
\]
\vskip.2cm
\item[2)] for every $t>\mathcal{C}_q(\beta)$, the set
\[
\mathcal{E}_{\Gamma(\beta,R),q}(t)=\left\{\varphi\in \mathcal{D}^{1,2}_0(\Gamma(\beta,R))\, :\, \int_{\Gamma(\beta,R)} w_{\Gamma(\beta,R),q}^{q-2}\,|\varphi|^2\,dx=1,\ \int_{\Gamma(\beta,R)} |\nabla \varphi|^2\,dx\le t\right\},
\]
is not precompact in $L^2(\Gamma(\beta,R);w_{\Gamma(\beta,R),q}^{q-2})$.
\end{itemize}
\end{prop}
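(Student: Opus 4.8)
The plan is to identify the ``concentration energy at the tip'' $\mathcal{C}_q(\beta)$ with a sharp Hardy--Lane--Emden constant for the \emph{infinite} cone $\Gamma(\beta,\infty)$, and to use the standing hypothesis $\Phi\big(\tfrac{2}{2-q}\big)<\lambda(\mathcal{S}(\beta))$ (equivalently $\tfrac{2}{2-q}<\alpha(\beta)$) twice. Writing $m:=\tfrac{2}{2-q}$, this hypothesis is exactly the coercivity of $-\Delta_{\mathbb{S}^{N-1}}-\Phi(m)$ on $H^1_0(\mathcal{S}(\beta))$, which yields a unique positive solution $\Lambda$ of the ``Lane--Emden problem on the cap'', namely $-\Delta_{\mathbb{S}^{N-1}}\Lambda-\Phi(m)\,\Lambda=\Lambda^{q-1}$ in $\mathcal{S}(\beta)$ with $\Lambda=0$ on $\partial\mathcal{S}(\beta)$. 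The cornerstone --- and the step I expect to be the main obstacle --- is the sharp behaviour of the least energy solution at the tip:
\[
w_{\Gamma(\beta,R),q}(x)=\Lambda\!\Big(\tfrac{x}{|x|}\Big)\,|x|^{m}\,\bigl(1+o(1)\bigr)\qquad\text{as }x\to 0 .
\]
To prove this I would (i) trap $w_{\Gamma(\beta,R),q}$ near the tip between $c\,|x|^{m}\psi_1(x/|x|)$ and $C\,|x|^{m}\psi_1(x/|x|)$, with $\psi_1$ the first Dirichlet eigenfunction of the cap, the lower barrier coming from the coercivity of $-\Delta_{\mathbb{S}^{N-1}}-\Phi(m)$ and comparison, the upper barrier from the Green function estimates of \cite{CMMR} already used in Corollary \ref{coro:fiesta} --- with additional care near the lateral boundary of the cone, where $|x|^m\psi_1$ vanishes linearly and a local boundary regularity (boundary Harnack) argument is needed; (ii) run the self-similar blow-up $w_r(x):=r^{-m}w_{\Gamma(\beta,R),q}(rx)$, which solves the Lane--Emden equation on the dilated cone $\Gamma(\beta,R/r)$ and, by (i), is locally uniformly bounded, so that $w_r\to W$ in $C^1_{\mathrm{loc}}(\Gamma(\beta,\infty))$ along subsequences with $W$ positive and comparable to $|x|^m\psi_1$; (iii) use a Weiss--type monotonicity formula for $-\Delta u=u^{q-1}$ tuned to the homogeneity $m$ to force $W$ to be $m$-homogeneous, hence $W(x)=\Lambda(x/|x|)|x|^m$, and uniqueness of $\Lambda$ to promote subsequential convergence to the full limit.

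\emph{The limit \eqref{tips} and its independence of $R$.} For $\varphi\in C^\infty_0(\Gamma(\beta,R))$ the substitution $y=nx$ gives $\int|\nabla\varphi_n|^2\,dx=\int|\nabla\varphi|^2\,dy$ and $\int|\varphi_n|^2 w_{\Gamma(\beta,R),q}^{q-2}\,dx=n^{-2}\int|\varphi(y)|^2 w_{\Gamma(\beta,R),q}(y/n)^{q-2}\,dy$. On $\mathrm{supp}\,\varphi\Subset\Gamma(\beta,R)\setminus\{0\}$ the tip asymptotics give the uniform bound $n^{-2}w_{\Gamma(\beta,R),q}(y/n)^{q-2}\le C\,|y|^{-2}$ and, since $m(q-2)=-2$, the pointwise limit $n^{-2}w_{\Gamma(\beta,R),q}(y/n)^{q-2}\to\Lambda(y/|y|)^{q-2}|y|^{-2}$, so by dominated convergence the quotient in \eqref{tips} tends to $\int|\nabla\varphi|^2\big/\int W^{q-2}|\varphi|^2$, where $W(y):=\Lambda(y/|y|)|y|^{m}$. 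Because $W$ is $m$-homogeneous and $m(q-2)=-2$, the dilation $\varphi\mapsto\lambda^{(N-2)/2}\varphi(\lambda\,\cdot)$, mapping $C^\infty_0(\Gamma(\beta,R))$ onto $C^\infty_0(\Gamma(\beta,R/\lambda))$, leaves this quotient invariant; hence its infimum over $C^\infty_0(\Gamma(\beta,R))$ is independent of $R$, and by density it equals
\[
\mathcal{C}_q(\beta)=\inf\Big\{\tfrac{\int|\nabla\varphi|^2}{\int W^{q-2}|\varphi|^2}\ :\ \varphi\in\mathcal{D}^{1,2}_0(\Gamma(\beta,\infty)),\ \varphi\neq 0\Big\} .
\]

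\emph{The bounds $\mathcal{C}_q(\beta)>1$ and $\mathcal{C}_q(\beta)\to 1$.} Since $-\Delta W=W^{q-1}$ on $\Gamma(\beta,\infty)$, the ground state substitution $\varphi=Wv$ gives the identity $\int|\nabla\varphi|^2=\int W^{q-2}|\varphi|^2+\int W^2|\nabla v|^2$ for $\varphi\in C^\infty_0(\Gamma(\beta,\infty))$; in particular the Hardy--Lane--Emden inequality holds on $\Gamma(\beta,\infty)$ (used above for density) and $\mathcal{C}_q(\beta)=1+\inf_v\big(\int W^2|\nabla v|^2\big/\int W^q v^2\big)$. Passing to Emden--Fowler coordinates $|y|=e^t$ and putting $u=e^{kt/2}v$ with $k=qm+N$, the cross term vanishes and $\partial_t u$ enters only with a nonnegative sign; discarding it and applying Fubini reduces the bound to the degenerate weighted eigenvalue problem $-\mathrm{div}_{\mathbb{S}}(\Lambda^2\nabla_{\mathbb{S}}u)+\tfrac{k^2}{4}\Lambda^2u=\mu\,\Lambda^q u$ on $\mathcal{S}(\beta)$, whose first eigenvalue $\mu_1(\beta)$ is strictly positive --- a weighted Poincar\'e inequality on the cap, valid because $q>1$ keeps $\Lambda^{q-2}$ integrable up to $\partial\mathcal{S}(\beta)$ --- whence $\mathcal{C}_q(\beta)\ge 1+\mu_1(\beta)>1$. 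For the limit, test the quotient with $\varphi(y)=W(y)\,\eta(|y|)$, $\eta\in C^\infty_0((0,\infty))\setminus\{0\}$ fixed: the identity gives $\mathcal{C}_q(\beta)-1\le C_\eta\,\|\Lambda\|_{L^2(\mathcal{S}(\beta))}^2\big/\|\Lambda\|_{L^q(\mathcal{S}(\beta))}^q$ with $C_\eta$ independent of $\beta$, and testing the cap equation with $\Lambda$ together with $\int_{\mathcal{S}(\beta)}|\nabla_{\mathbb{S}}\Lambda|^2\ge\lambda(\mathcal{S}(\beta))\int_{\mathcal{S}(\beta)}\Lambda^2$ bounds this ratio by $(\lambda(\mathcal{S}(\beta))-\Phi(m))^{-1}$, which tends to $0$ as $\beta\to 1^-$ by \eqref{e:asymptotics}; combined with $\mathcal{C}_q(\beta)>1$ this gives $\mathcal{C}_q(\beta)\to 1$.

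\emph{Failure of precompactness.} Let $t>\mathcal{C}_q(\beta)$. By the previous step there is $\varphi\in C^\infty_0(\Gamma(\beta,R))$ with $\lim_n\int|\nabla\varphi_n|^2\big/\int|\varphi_n|^2 w_{\Gamma(\beta,R),q}^{q-2}<t$; normalizing in $L^2(\Gamma(\beta,R);w_{\Gamma(\beta,R),q}^{q-2})$ produces a sequence lying in $\mathcal{E}_{\Gamma(\beta,R),q}(t)$ for $n$ large. Since $\mathrm{supp}\,\varphi_n\subset B_{\rho_\varphi/n}(0)$, these functions eventually vanish on every compact subset of $\Gamma(\beta,R)$, so any subsequence converging in $L^2(\Gamma(\beta,R);w_{\Gamma(\beta,R),q}^{q-2})$ must converge to $0$, contradicting the normalization; hence $\mathcal{E}_{\Gamma(\beta,R),q}(t)$ is not precompact there. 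In short, once the tip asymptotics are proved --- the genuinely hard input, requiring the boundary-adapted upper barrier and a monotonicity formula (or a direct Liouville/uniqueness theorem for positive solutions of $-\Delta u=u^{q-1}$ on the infinite cone) --- assertions 1) and 2) follow by combining the ground state transform, an Emden--Fowler reduction, an elementary weighted Poincar\'e inequality, and a concentration argument.
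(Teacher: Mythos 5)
Your architecture coincides with the paper's at almost every step: the identification of $\mathcal{C}_q(\beta)$ with a Hardy-type constant on the infinite cone weighted by the $\tfrac{2}{2-q}$-homogeneous solution $V(x)=|x|^{2/(2-q)}\psi(x/|x|)$ (your $W$, $\Lambda$), the ground-state/Picone substitution $\varphi=Wv$ reducing $\mathcal{C}_q(\beta)-1$ to the positivity of a degenerate weighted quotient on the cap, the upper bound $\mathcal{C}_q(\beta)-1\le C\,\|\Lambda\|_{L^2}^2/\|\Lambda\|_{L^q}^q\le C\,(\lambda(\mathcal{S}(\beta))-\Phi(\tfrac{2}{2-q}))^{-1}$ obtained by testing with $W\,\eta(|y|)$ and then testing the angular equation \eqref{angular} with $\Lambda$ (this is exactly Remark \ref{oss:inversa}), and the concentration argument for point 2) (weighted norm $\equiv 1$ while the unweighted $L^2$ norm, or the supports, collapse at the tip). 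All of that is correct and matches Lemma \ref{lm:1} and the final proof.

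The one place where you diverge — and where your write-up is not yet a proof — is precisely the step you flag as the crux: the convergence of $w_{\Gamma(\beta,R),q}$, after the self-similar rescaling, to the homogeneous solution $V$ (the paper's \eqref{dontmess}). You propose two-sided barriers with boundary Harnack near the lateral boundary, a compactness/blow-up argument, and a Weiss-type monotonicity formula to force homogeneity of the limit; none of these is carried out, and the monotonicity formula in particular is a nontrivial piece of machinery to set up for $-\Delta u=u^{q-1}$ on a cone. The paper reaches the same conclusion much more cheaply, and you should be aware of this shortcut: by the comparison lemma of \cite[Lemma 2.7]{BFR} the least energy solutions are \emph{monotone in $R$}, so the pointwise limit $W=\lim_{R\to\infty}w_{\Gamma(\beta,R),q}$ exists outright (finite thanks to the upper barrier \eqref{stiacciata}, itself proved by a truncation/minimality comparison with $V$ rather than by Green function estimates); the exact scaling identity $w_{\Gamma(\beta,R),q}(x)=R^{2/(2-q)}w_{\Gamma(\beta,1),q}(x/R)$ then shows by a two-line computation that $W(\lambda x)=\lambda^{2/(2-q)}W(x)$, so no monotonicity formula is needed to get homogeneity; finally $W=V$ follows from uniqueness of the positive solution of the angular problem \eqref{angular}, proved by the Brezis--Oswald/Picone trick. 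In short: your plan is sound and would presumably work, but the genuinely hard input as you have framed it can be replaced by monotonicity in $R$ plus scaling plus angular uniqueness, which turns the ``main obstacle'' into an elementary argument; as written, your proposal leaves that obstacle standing.
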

Before giving the proof of Proposition \ref{prop:trac}, we need some intermediate expedient results.

\begin{lm}[A special solution]
\label{lm:coni}
Let $1<q<2$ and let $\beta$ be such that
\begin{equation}
\label{beta}
\Phi\left(\frac{2}{2-q}\right)<\lambda(\mathcal{S}(\beta)).
\end{equation}
Then there exists a positive function $\psi\in \mathcal{D}^{1,2}_0(\mathcal{S}(\beta))\cap L^\infty(\mathcal{S}(\beta))$ such that 
\[
V(x)=|x|^\frac{2}{2-q}\,\psi\left(\frac{x}{|x|}\right),
\]
is a positive solution of 
\[
-\Delta V=V^{q-1},\ \text{ in } \Gamma(\beta,+\infty),\qquad V=0,\ \text{ on }\partial\Gamma(\beta,+\infty).
\]
Moreover, for every $R>0$ we have
\begin{equation}
\label{stiacciata}
w_{\Gamma(\beta,R),q}(x)\le V(x),\qquad \text{ for } x\in \Gamma(\beta,R).
\end{equation}
Finally, we have 
\begin{equation}
\label{dontmess}
\lim_{R\to +\infty}w_{\Gamma(\beta,R),q}=V,\qquad \mbox{ uniformly on every } \Gamma(\beta,r).
\end{equation}
\end{lm}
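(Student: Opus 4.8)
\textit{Plan.} The plan is to look for $V$ in separated form, reduce the equation on the cone to a sublinear problem on the spherical cap, solve the latter variationally (this is where hypothesis \eqref{beta} enters), and then obtain \eqref{stiacciata} by a comparison argument and \eqref{dontmess} by monotonicity plus a self-similarity/uniqueness argument. Throughout I write $\gamma:=\frac{2}{2-q}$ for brevity.

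\textit{Step 1: reduction to the cap.} First I would insert $V(x)=|x|^{\gamma}\,\psi(x/|x|)$ into the equation. A computation in polar coordinates gives
\[
-\Delta V=|x|^{\gamma-2}\bigl(-\Delta_{\mathbb{S}^{N-1}}\psi-\Phi(\gamma)\,\psi\bigr),\qquad V^{q-1}=|x|^{\gamma(q-1)}\,\psi^{q-1}=|x|^{\gamma-2}\,\psi^{q-1},
\]
the identity $\gamma(q-1)=\gamma-2$ being precisely the defining relation of $\gamma$. Hence $V$ solves $-\Delta V=V^{q-1}$ in $\Gamma(\beta,+\infty)$ with zero lateral data if and only if $\psi\ge 0$ solves
\[
-\Delta_{\mathbb{S}^{N-1}}\psi-\Phi(\gamma)\,\psi=\psi^{q-1}\ \text{ in }\mathcal{S}(\beta),\qquad \psi=0\ \text{ on }\partial\mathcal{S}(\beta).
\]

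\textit{Step 2: solving the cap equation.} I would produce $\psi$ by minimizing
\[
J(\psi)=\frac12\int_{\mathcal{S}(\beta)}\bigl(|\nabla_{\mathbb{S}^{N-1}}\psi|^2-\Phi(\gamma)\,\psi^2\bigr)-\frac1q\int_{\mathcal{S}(\beta)}|\psi|^q
\]
over $\mathcal{D}^{1,2}_0(\mathcal{S}(\beta))$. Since $\lambda(\mathcal{S}(\beta))$ is the first Dirichlet eigenvalue of $-\Delta_{\mathbb{S}^{N-1}}$ on the cap, assumption \eqref{beta}, i.e.\ $\Phi(\gamma)<\lambda(\mathcal{S}(\beta))$, makes the quadratic part of $J$ equivalent to $\|\nabla_{\mathbb{S}^{N-1}}\psi\|_{L^2(\mathcal{S}(\beta))}^2$; as $q<2$, $J$ is then coercive and weakly lower semicontinuous on $\mathcal{D}^{1,2}_0(\mathcal{S}(\beta))$ (using the compact Sobolev embedding on the bounded cap), so it attains its infimum at some $\psi$, which is nontrivial because $J(t\,\psi_0)<0$ for small $t>0$. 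Replacing $\psi$ by $|\psi|$ and using the strong maximum principle gives $\psi>0$ in $\mathcal{S}(\beta)$, and a standard bootstrap (the nonlinearity being subcritical, indeed sublinear) gives $\psi\in L^\infty(\mathcal{S}(\beta))$. Then $V(x)=|x|^{\gamma}\psi(x/|x|)$ is positive; near the tip $|\nabla V|\sim |x|^{\gamma-1}$ with $\gamma-1=\frac{q}{2-q}>0$, so $V$ is bounded and $H^1$ up to the tip, hence a genuine weak solution of $-\Delta V=V^{q-1}$ in $\Gamma(\beta,+\infty)$, vanishing on $\partial\Gamma(\beta,+\infty)$ (and classical in the interior, by elliptic regularity).

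\textit{Step 3: the bound \eqref{stiacciata}.} Restricted to $\Gamma(\beta,R)$, $V$ is a positive supersolution (in fact a solution) of $-\Delta u=u^{q-1}$ dominating $w_{\Gamma(\beta,R),q}$ on $\partial\Gamma(\beta,R)$ (both vanish on the lateral part, while $w_{\Gamma(\beta,R),q}=0<V$ on $\{|x|=R\}$). The comparison principle for the sublinear equation — valid because $s\mapsto s^{q-2}$ is strictly decreasing, applied through the Picone/hidden-convexity test functions $(u_1^2-u_2^2)_+/u_i$ — then gives $w_{\Gamma(\beta,R),q}\le V$ on $\Gamma(\beta,R)$; by the scaling $w_{\Gamma(\beta,R),q}(x)=R^{\gamma}\,w_{\Gamma(\beta,1),q}(x/R)$, which follows from uniqueness of the nonnegative solution and the homogeneity $\gamma(2-q)=2$, it even suffices to treat $R=1$. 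I expect this step to be the main obstacle: one must justify the comparison on the cone, where sub- and supersolution both vanish on the lateral boundary and the domain has a corner at the tip. The tip is a single point, of zero $W^{1,2}$-capacity (also in dimension $2$), so it does not affect admissibility of the test functions; alternatively one works on $\Gamma(\beta,R)\cap\{|x|>\varepsilon\}$ and lets $\varepsilon\to 0$. It is exactly here that \eqref{beta}, equivalently $\gamma<\alpha(\beta)$, is used a second time: near the tip $V\sim|x|^{\gamma}$ dominates the harmonic profile $|x|^{\alpha(\beta)}$, which is what makes the comparison (and the identification below) run smoothly.

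\textit{Step 4: the convergence \eqref{dontmess}.} By domain monotonicity of the least-energy (equivalently, the unique nonnegative) solution, $R\mapsto w_{\Gamma(\beta,R),q}$ is nondecreasing, and by \eqref{stiacciata} it is bounded above by $V$; hence $W:=\lim_{R\to\infty}w_{\Gamma(\beta,R),q}$ exists pointwise on $\Gamma(\beta,+\infty)$, with $0<W\le V$ (positivity since $W\ge w_{\Gamma(\beta,R_0),q}>0$ on each $\Gamma(\beta,R_0)$). The bound $W\le V$ yields local $L^\infty$, hence local $C^{1}$, bounds on the family $w_{\Gamma(\beta,R),q}$, so the convergence is in $C^1_{\mathrm{loc}}(\Gamma(\beta,+\infty))$ and $W$ solves $-\Delta W=W^{q-1}$ in $\Gamma(\beta,+\infty)$ with zero lateral data. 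Passing to the limit in the scaling identity shows $W$ is positively $\gamma$-homogeneous, $W(x)=|x|^{\gamma}\phi(x/|x|)$, so $\phi>0$ solves the same cap equation as $\psi$; by uniqueness of the positive solution of the sublinear cap problem (Brezis--Oswald), $\phi=\psi$ and therefore $W=V$. Finally $w_{\Gamma(\beta,R),q}\uparrow V$ with all functions continuous on the compact set $\overline{\Gamma(\beta,r)}$ (continuity at the tip because $0\le w_{\Gamma(\beta,R),q}\le V\to 0$ there), so Dini's theorem upgrades the convergence to uniform on $\overline{\Gamma(\beta,r)}$, which is \eqref{dontmess}.
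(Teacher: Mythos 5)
Your proposal is correct and follows essentially the same route as the paper: separation of variables reducing to the sublinear problem on the cap (which the paper solves by minimizing a constrained Rayleigh quotient and rescaling, rather than your free energy minimization — a cosmetic difference), the bound \eqref{stiacciata} by the hidden-convexity/variational comparison for the sublinear equation (the paper's explicit $\max/\min$ truncation argument, which also sidesteps your worry about the tip), and \eqref{dontmess} by monotonicity in $R$, scaling to get homogeneity of the limit, and Brezis--Oswald/Picone uniqueness on the cap. No gaps.
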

\begin{proof}
We start by considering the variational problem
\[
\mu_q(\beta)=\min_{\varphi\in \mathcal{D}^{1,2}_0(\mathcal{S}(\beta))\setminus\{0\}} \frac{\displaystyle\int_{\mathcal{S}(\beta)} |\nabla_\tau\varphi|^2\,d\mathcal{H}^{N-1}-\Phi\left(\frac{2}{2-q}\right)\,\int_{\mathcal{S}(\beta)} |\varphi|^2\,d\mathcal{H}^{N-1}}{\displaystyle\left(\int_{\mathcal{S}(\beta)} |\varphi|^q\,d\mathcal{H}^{N-1}\right)^\frac{2}{q}},
\]
where $\nabla_\tau$ denotes the tangential gradient.
By definition of $\lambda(\mathcal{S}(\beta))$, we have
\[
\int_{\mathcal{S}(\beta)} |\nabla_\tau \varphi|^2\,d\mathcal{H}^{N-1}\ge \lambda(\mathcal{S}(\beta))\,\int_{\mathcal{S}(\beta)} |\varphi|^2\,d\mathcal{H}^{N-1},\qquad \text{ for }\varphi\in \mathcal{D}^{1,2}_0(\mathcal{S}(\beta)).
\]
Then, keeping in mind the choice \eqref{beta} of $\beta$, by applying the Direct Methods in the Calculus of Variations
we easily get that the value $\mu_q(\beta)$ is attained by a function $\widetilde\psi$, which can be taken to be positive and normalized by the condition
\begin{equation}
\label{normalepsi}
\int_{\mathcal{S}(\beta)} |\widetilde\psi|^q\,d\mathcal{H}^{N-1}=1.
\end{equation}
Moreover, still thanks to \eqref{beta}, we can assure that $\mu_q(\beta)>0$. We now observe that $\widetilde\psi$ weakly solves
\[
-\Delta_g \widetilde\psi-\Phi\left(\frac{2}{2-q}\right)\,\widetilde\psi=\mu_q(\beta)\,\widetilde\psi^{q-1},\qquad \text{ in } \mathcal{S}(\beta),
\]
where $\Delta_g$ is the Laplace-Beltrami operator on $\mathbb{S}^{N-1}$.
If we now set 
\begin{equation}
\label{scegli}
\psi=\mu_q(\beta)^{-\frac{1}{2-q}} \widetilde\psi,
\end{equation}
this function solves
\begin{equation}
\label{angular}
-\Delta_g \psi-\Phi\left(\frac{2}{2-q}\right)\,\psi=\psi^{q-1}.
\end{equation}
By writing the Laplacian in spherical coordinates, it is easily seen that the function
\[
V(x)=|x|^\frac{2}{2-q}\,\psi\left(\frac{x}{|x|}\right),
\]
has the claimed properties. 
\vskip.2cm\noindent
We now prove the property \eqref{stiacciata}. For this, we use a comparison principle similar to that of \cite[Lemma 2.7]{BFR}.
We observe that the restriction of $V$ to $\Gamma(\beta,R)$ is the unique solution of the variational problem
\begin{equation}
\label{troncata}
\min_{\varphi\in W^{1,2}(\Gamma(\beta,R))} \left\{\frac{1}{2}\,\int_{\Gamma(\beta,R)} |\nabla \varphi|^2\,dx-\frac{1}{q}\,\int_{\Gamma(\beta,R)}\varphi^q\,dx\, :\, \varphi\ge 0,\ \varphi=V \text{ on } \partial\Gamma(\beta,R)\right\}.
\end{equation}
We test the minimality of $V$ in \eqref{troncata} comparing with the value corresponding to the function
\(
\varphi = \max\{w_{\Gamma(\beta,R),q},\,V\}.
\)
In this way, we get
\begin{equation}
\label{UV}
\begin{split}
\frac{1}{2}\,\int_{\{w_{\Gamma(\beta,R),q}>V\}} |\nabla w_{\Gamma(\beta,R),q}|^2\,dx&-\frac{1}{q}\,\int_{\{w_{\Gamma(\beta,R),q}>V\}}w_{\Gamma(\beta,R),q}^q\,dx\\
&\ge \frac{1}{2}\,\int_{\{w_{\Gamma(\beta,R),q}>V\}} |\nabla V|^2\,dx-\frac{1}{q}\,\int_{\{w_{\Gamma(\beta,R),q}>V\}}V^q\,dx.
\end{split}
\end{equation}
If we now introduce 
\[
\widetilde\varphi=\min\big\{w_{\Gamma(\beta,R),q},V\big\},
\] 
and add on both sides of \eqref{UV} the term 
\[
\frac{1}{2}\,\int_{\{w_{\Gamma(\beta,R),q}\le V\}} |\nabla w_{\Gamma(\beta,R),q}|^2\,dx-\frac{1}{q}\,\int_{\{w_{\Gamma(\beta,R),q}\le V\}}w_{\Gamma(\beta,R),q}^q\,dx,
\]
we get 
\begin{equation}
\label{unaltro}
\frac{1}{2}\,\int_{\Gamma(\beta,R)} |\nabla w_{\Gamma(\beta,R),q}|^2\,dx-\frac{1}{q}\,\int_{\Gamma(\beta,R)}w_{\Gamma(\beta,R),q}^q\,dx\ge \frac{1}{2}\,\int_{\Gamma(\beta,R)} |\nabla \widetilde\varphi|^2\,dx-\frac{1}{q}\,\int_{\Gamma(\beta,R)}\widetilde \varphi^q\,dx.
\end{equation}
On the other hand, by Definition \ref{defi:boundeddensity} the function $w_{\Gamma(\beta,R),q}$ is the unique solution of 
\[
\min_{\varphi\in \mathcal{D}^{1,2}_0(\Gamma(\beta,R))} \left\{\frac{1}{2}\,\int_{\Gamma(\beta,R)} |\nabla \varphi|^2\,dx-\frac{1}{q}\,\int_{\Gamma(\beta,R)}\varphi^q\,dx\, :\, \varphi\ge 0\right\}.
\]
Hence, since $\widetilde\varphi$ is admissible for this problem, equation \eqref{unaltro} shows that 
\[
\widetilde\varphi=\min\big\{w_{\Gamma(\beta,R),q},V\big\}=w_{\Gamma(\beta,R),q},
\]
which is the desired estimate \eqref{stiacciata}.
\vskip.2cm\noindent
Finally, we prove \eqref{dontmess}. We first observe that by  \cite[Lemma 2.7]{BFR}, we get that 
\[
w_{\Gamma(\beta,R_1),q}\le w_{\Gamma(\beta,R_2),q},\qquad \mbox{ for every } R_1\le R_2.
\]
Thus the pointwise limit  
\[
W(x):=\lim_{R\to+\infty} w_{\Gamma(\beta,R),q}(x),
\]
exists by monotonicity and it is finite, thanks to \eqref{stiacciata}. Moreover, by proceeding as in the proof of \cite[Proposition 5.1]{BFR}, it is not difficult to see that $W$ solves the Lane-Emden equation in the infinite cone $\Gamma(\beta,+\infty)$. By using the scaling properties of the Lane-Emden equation and the uniqueness of the positive least energy solution in $\Gamma(\beta,R)$, we observe that
\begin{equation}
\label{scalingw}
w_{\Gamma(\beta,R),q}(x)=R^\frac{2}{2-q}\,w_{\Gamma(\beta,1),q}\left(\frac{x}{R}\right),\qquad \mbox{ for }x\in \Gamma(\beta,R).
\end{equation}
Thus, for every $\lambda>0$, we have
\[
\begin{split}
W(\lambda\,x)=\lim_{R\to+\infty} w_{\Gamma(\beta,R),q}(\lambda\,x)&=\lim_{R\to+\infty}R^\frac{2}{2-q}\,w_{\Gamma(\beta,1),q}\left(\frac{\lambda\,x}{R}\right)\\
&=\lambda^\frac{2}{2-q}\,\lim_{R\to+\infty}\left(\frac{R}{\lambda}\right)^\frac{2}{2-q}\,w_{\Gamma(\beta,1),q}\left(\frac{\lambda\,x}{R}\right)\\
&=\lambda^\frac{2}{2-q}\,\lim_{R\to +\infty} w_{\Gamma(\beta,R/\lambda)}(x)\\
&=\lambda^\frac{2}{2-q}\,W(x).
\end{split}
\]
This shows that $W$ is $2/(2-q)-$homogeneous, so that it can be written as 
\[
W(x)=|x|^\frac{2}{2-q}\,W\left(\frac{x}{|x|}\right),\qquad \mbox{ for every } x\in \Gamma(\beta,+\infty).
\]
In order to conclude, we just need to show that
\[
\psi(\omega)=W(\omega),\qquad \mbox{ for every } \omega\in\mathcal{S}(\beta),
\]
where $\psi$ is still the function defined in \eqref{scegli}.
\par
Since $W$ solves the Lane-Emden equation, by writing the Laplacian in spherical coordinates we get that $\omega\to W(\omega)$ must be a positive solution of equation \eqref{angular}, as well. We now adapt the trick by Brezis and Oswald (see \cite{BO} and also \cite[Lemma 2.2]{BFR}), based on Picone's inequality, in order to show uniqueness for \eqref{angular}. We take the weak formulations
\[
\int_{\mathcal{S}(\beta)} \langle \nabla_\tau\psi,\nabla_\tau \varphi\rangle\,d\mathcal{H}^{N-1}-\Phi\left(\frac{2}{2-q}\right)\,\int_{\mathcal{S}(\beta)}\psi\,\varphi\,d\mathcal{H}^{N-1}=\int_{\mathcal{S}(\beta)} \psi^{q-1}\,\varphi\,d\mathcal{H}^{N-1},
\]
and
\[
\int_{\mathcal{S}(\beta)} \langle \nabla_\tau W,\nabla_\tau \varphi\rangle\,d\mathcal{H}^{N-1}-\Phi\left(\frac{2}{2-q}\right)\,\int_{\mathcal{S}(\beta)}W\,\varphi\,d\mathcal{H}^{N-1}=\int_{\mathcal{S}(\beta)} W^{q-1}\,\varphi\,d\mathcal{H}^{N-1}.
\]
Then, for every $\varepsilon>0$, we insert the test function $\varphi=(W^2/(\varepsilon+\psi)-\psi)$ in the first equation and the test function $\varphi=(\psi^2/(\varepsilon+W)-W)$ in the second one. By summing up the resulting identities, we get
\[
\begin{split}
\int_{\mathcal{S}(\beta)}& \left\langle \nabla_\tau\psi,\nabla_\tau \left(\frac{W^2}{\varepsilon+\psi}-\psi\right)\right\rangle\,d\mathcal{H}^{N-1}+\int_{\mathcal{S}(\beta)} \left\langle \nabla_\tau W,\nabla_\tau \left(\frac{\psi^2}{\varepsilon+W}-W\right)\right\rangle\,d\mathcal{H}^{N-1}\\
&-\Phi\left(\frac{2}{2-q}\right)\,\int_{\mathcal{S}(\beta)}\psi\,\left(\frac{W^2}{\varepsilon+\psi}-\psi\right)\,d\mathcal{H}^{N-1}-\Phi\left(\frac{2}{2-q}\right)\,\int_{\mathcal{S}(\beta)}W\,\left(\frac{\psi^2}{\varepsilon+W}-W\right)\,d\mathcal{H}^{N-1}\\
&=\int_{\mathcal{S}(\beta)} \psi^{q-1}\,\left(\frac{W^2}{\varepsilon+\psi}-\psi\right)\,d\mathcal{H}^{N-1}+\int_{\mathcal{S}(\beta)} W^{q-1}\,\left(\frac{\psi^2}{\varepsilon+W}-W\right)\,d\mathcal{H}^{N-1}.
\end{split}
\]
We now use {\it Picone's inequality}, so that 
\[
\int_{\mathcal{S}(\beta)} \left\langle \nabla_\tau\psi,\nabla_\tau \frac{W^2}{\varepsilon+\psi}\right\rangle\,d\mathcal{H}^{N-1}=\int_{\mathcal{S}(\beta)} \left\langle \nabla_\tau(\psi+\varepsilon),\nabla_\tau \frac{W^2}{\varepsilon+\psi}\right\rangle\,d\mathcal{H}^{N-1}\le \int_{\mathcal{S}(\beta)} |\nabla_\tau W|^2\,d\mathcal{H}^{N-1},
\]
and 
\[
\int_{\mathcal{S}(\beta)} \left\langle \nabla_\tau W,\nabla_\tau \frac{\psi^2}{\varepsilon+W}\right\rangle\,d\mathcal{H}^{N-1}=\int_{\mathcal{S}(\beta)} \left\langle \nabla_\tau(W+\varepsilon),\nabla_\tau \frac{\psi^2}{\varepsilon+W}\right\rangle\,d\mathcal{H}^{N-1}\le \int_{\mathcal{S}(\beta)} |\nabla_\tau \psi|^2\,d\mathcal{H}^{N-1}.
\]
A further passage to the limit as $\varepsilon$ goes to $0$ leads to
\[
\int_{\mathcal{S}(\beta)} \left(\psi^{q-2}\,W^2-\psi^q\right)\,d\mathcal{H}^{N-1}+\int_{\mathcal{S}(\beta)} \left(W^{q-2}\,\psi^2 -W^q\right)\,d\mathcal{H}^{N-1}\le 0.
\]
The last two integrals can be rearranged as follows
\[
\int_{\mathcal{S}(\beta)} (\psi^{q-2}-W^{q-2})\,(W^2-\psi^2)\,d\mathcal{H}^{N-1}\le 0.
\]
On the other hand, by virtue of the fact that $q<2$, we have
\[
(a^{q-2}-b^{q-2})\,(b^2-a^2)>0,\qquad \text{ for every } a,b>0 \text{ such that } a\not=b.
\]
The last two displays shows that we must have $\psi=W$ on $\mathcal{S}(\beta)$. The proof is now complete.
\end{proof}
\begin{oss}
\label{oss:inversa}
We observe that from the equation \eqref{angular}, we have
\[
\begin{split}
\int_{\mathcal{S}(\beta)}\psi^q\,d\mathcal{H}^{N-1}&=\int_{\mathcal{S}(\beta)}|\nabla_\tau\psi|^2\,d\mathcal{H}^{N-1}-\Phi\left(\frac{2}{2-q}\right)\,\int_{\mathcal{S}(\beta)}\psi^2\,d\mathcal{H}^{N-1}\\
&\ge \left(\lambda(\mathcal{S}(\beta))-\Phi\left(\frac{2}{2-q}\right)\right)\,\int_{\mathcal{S}(\beta)} \psi^2\,d\mathcal{H}^{N-1},
\end{split}
\]
where we also used Poincar\'e's inequality on $\mathcal{S}(\beta)$. By recalling \eqref{normalepsi} and \eqref{scegli}, we also have
\[
\int_{\mathcal{S}(\beta)}\psi^q\,d\mathcal{H}^{N-1}=\mu_q(\beta)^{-\frac{1}{2-q}}.
\] 
\end{oss}

\begin{lm}
\label{lm:1}
Let $1<q<2$ and let $0\le \beta<1$ be such that
\[
\Phi\left(\frac{2}{2-q}\right)<\lambda(\mathcal{S}(\beta)).
\]
For every $0<R\le +\infty$, we define the Hardy-type constant
\[
\sigma(\beta,R)=\inf_{\varphi\in C^\infty_0(\Gamma(\beta,R))\setminus\{0\}} \frac{\displaystyle \int_{\Gamma(\beta,R)}|\nabla \varphi|^2\,dx}{\displaystyle\int_{\Gamma(\beta,R)} |\varphi|^2\,V^{q-2}\,dx},
\]
where $V$ is the function of Lemma \ref{lm:coni}.
Then we have
\[
\sigma(\beta,R)=\sigma(\beta,+\infty)=:\sigma(\beta)> 1,
\]
and
\[
\lim_{\beta\to 1^-}\sigma(\beta)=1.
\]
\end{lm}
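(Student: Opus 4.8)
The plan is to reduce the statement, via an Emden--Fowler change of variables, to a weighted eigenvalue problem on the spherical cap $\mathcal{S}(\beta)$, where the singular weight turns out to be \emph{subcritical} near the boundary, so that compactness is restored. First, since $V$ is $2/(2-q)$--homogeneous, the weight $V^{q-2}$ is $(-2)$--homogeneous; hence the rescaling $\varphi\mapsto\lambda^{(N-2)/2}\varphi(\lambda\,\cdot)$ preserves both $\int|\nabla\varphi|^2\,dx$ and $\int V^{q-2}\varphi^2\,dx$ and maps $C^\infty_0(\Gamma(\beta,R))$ onto $C^\infty_0(\Gamma(\beta,R/\lambda))$, so that $\sigma(\beta,R)$ does not depend on $R\in(0,+\infty)$ and, since $C^\infty_0(\Gamma(\beta,+\infty))=\bigcup_{R>0}C^\infty_0(\Gamma(\beta,R))$, it also equals $\sigma(\beta,+\infty)=:\sigma(\beta)$. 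Next I would write $x=(r,\omega)$ with $r=|x|$, $\omega=x/|x|\in\mathcal{S}(\beta)$, and put $\varphi(x)=r^{-(N-2)/2}g(\log r,\omega)$; a direct computation (using $V^{q-2}(x)=r^{-2}\psi(\omega)^{q-2}$) turns $\int|\nabla\varphi|^2\,dx$ into $\int_{\mathbb{R}\times\mathcal{S}(\beta)}(g_t^2+|\nabla_\tau g|^2+\tfrac{(N-2)^2}{4}g^2)$ and $\int V^{q-2}\varphi^2\,dx$ into $\int_{\mathbb{R}\times\mathcal{S}(\beta)}\psi^{q-2}g^2$. Since these weights do not depend on $t$, testing with $g(t,\omega)=\chi_L(t)h(\omega)$ and letting $L\to\infty$ (upper bound), and discarding $g_t^2\ge0$ and using the defining inequality of $\Lambda(\beta)$ slicewise in $t$ (lower bound), one gets
\[
\sigma(\beta)=\Lambda(\beta):=\inf_{h\in C^\infty_0(\mathcal{S}(\beta))\setminus\{0\}}\frac{\displaystyle\int_{\mathcal{S}(\beta)}\Big(|\nabla_\tau h|^2+\tfrac{(N-2)^2}{4}\,h^2\Big)\,d\mathcal{H}^{N-1}}{\displaystyle\int_{\mathcal{S}(\beta)}\psi^{q-2}\,h^2\,d\mathcal{H}^{N-1}}.
\]

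To treat $\Lambda(\beta)$ I would exploit that $\psi>0$ solves $-\Delta_g\psi=\big(\Phi(\tfrac{2}{2-q})+\psi^{q-2}\big)\psi$ (this is \eqref{angular}): the Picone/Brezis--Oswald argument already used for Lemma \ref{lm:coni} gives, for every $h$ (first in $C^\infty_0$, then in $\mathcal{D}^{1,2}_0(\mathcal{S}(\beta))$ by density, the weighted term being continuous by the very inequality),
\[
\int_{\mathcal{S}(\beta)}|\nabla_\tau h|^2\,d\mathcal{H}^{N-1}\ge\Phi\!\Big(\tfrac{2}{2-q}\Big)\int_{\mathcal{S}(\beta)}h^2\,d\mathcal{H}^{N-1}+\int_{\mathcal{S}(\beta)}\psi^{q-2}\,h^2\,d\mathcal{H}^{N-1},
\]
so that $\sigma(\beta)=\Lambda(\beta)\ge1$, with $C_1:=\Phi(\tfrac{2}{2-q})+\tfrac{(N-2)^2}{4}>0$ acting as a "gap". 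For the limit $\beta\to1^-$ I would simply test $\Lambda(\beta)$ with $h=\psi$ (admissible by Lemma \ref{lm:coni}): by the equation $\int|\nabla_\tau\psi|^2=\Phi(\tfrac{2}{2-q})\int\psi^2+\int\psi^q$, hence the quotient equals $1+C_1\,\int\psi^2/\int\psi^q$, and then Remark \ref{oss:inversa} ($\int\psi^q\ge(\lambda(\mathcal{S}(\beta))-\Phi(\tfrac{2}{2-q}))\int\psi^2$) together with $\lambda(\mathcal{S}(\beta))\to+\infty$ (see \eqref{e:asymptotics}) gives
\[
1<\sigma(\beta)=\Lambda(\beta)\le1+\frac{C_1}{\lambda(\mathcal{S}(\beta))-\Phi(\tfrac{2}{2-q})}\longrightarrow 1\qquad\text{as }\beta\to1^-.
\]

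The hard part will be the strict inequality $\sigma(\beta)>1$. The obstruction is structural: $\sigma(\beta)$ is exactly the critical rate of concentration at the tip of the cone (this is what Proposition \ref{prop:trac} computes), so $\mathcal{D}^{1,2}_0(\Gamma(\beta,R))\hookrightarrow L^2(\Gamma(\beta,R);V^{q-2})$ fails to be compact for every $R$ and one cannot minimize directly on the cone. The key observation is that, after the reduction to the cap, this obstruction disappears: by Hopf's lemma applied to $\psi$ on the smooth set $\mathcal{S}(\beta)$ (note $-\Delta_g\psi\ge0$, $\psi>0$ in $\mathcal{S}(\beta)$ and $\psi=0$ on $\partial\mathcal{S}(\beta)$) one has $\psi\ge c_\beta\,d_{\partial\mathcal{S}(\beta)}$ near $\partial\mathcal{S}(\beta)$, hence $\psi^{q-2}\le C_\beta\,d_{\partial\mathcal{S}(\beta)}^{\,q-2}$ with $q-2>-2$, i.e.\ a weight milder than the Hardy one; combining the Hardy inequality on $\mathcal{S}(\beta)$, the interpolation $\int d^{q-2}f^2\le\big(\int d^{-2}f^2\big)^{(2-q)/2}\big(\int f^2\big)^{q/2}$ and the Rellich theorem then shows that $\mathcal{D}^{1,2}_0(\mathcal{S}(\beta))\hookrightarrow L^2(\mathcal{S}(\beta);\psi^{q-2})$ \emph{is} compact. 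Consequently $\Lambda(\beta)$ is attained at some $h_*\not\equiv0$, with $\int|\nabla_\tau h_*|^2+\tfrac{(N-2)^2}{4}\int h_*^2=\Lambda(\beta)\int\psi^{q-2}h_*^2$. If $\Lambda(\beta)=1$, plugging this into the Picone inequality above applied to $h_*$ gives $\int\psi^{q-2}h_*^2-\tfrac{(N-2)^2}{4}\int h_*^2\ge\Phi(\tfrac{2}{2-q})\int h_*^2+\int\psi^{q-2}h_*^2$, i.e.\ $0\ge C_1\int_{\mathcal{S}(\beta)}h_*^2\,d\mathcal{H}^{N-1}>0$, a contradiction; hence $\sigma(\beta)=\Lambda(\beta)>1$. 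I expect the recognition that the Emden--Fowler reduction restores the compactness genuinely lost on the cone — after which Picone's inequality and the equation satisfied by $\psi$ close the argument at once — to be the crux; the rest are routine computations.
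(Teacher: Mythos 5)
Your argument is correct, but it follows a genuinely different route from the paper's. You pass to Emden--Fowler (cylindrical) coordinates and identify $\sigma(\beta)$ exactly with the cap quotient $\Lambda(\beta)$ whose weight $\psi^{q-2}\lesssim \mathrm{dist}_g(\cdot,\partial\mathcal{S}(\beta))^{q-2}$ is subcritical (Hopf plus $C^1$ regularity of $\psi$ up to the boundary of the smooth cap, consistent with \eqref{doublesided}); this restores compactness of $\mathcal{D}^{1,2}_0(\mathcal{S}(\beta))\hookrightarrow L^2(\mathcal{S}(\beta);\psi^{q-2})$ via the same Hardy--interpolation--Rellich trick as in Proposition \ref{lm:weighted}, so $\Lambda(\beta)$ is attained, and strictness follows by feeding the minimizer into the Picone inequality relative to $\psi$ (i.e.\ to equation \eqref{angular}); the limit $\beta\to 1^-$ comes from testing with $\psi$ itself together with Remark \ref{oss:inversa} and \eqref{e:asymptotics}. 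The paper instead applies Picone directly on the cone, relative to $V$, writing $\sigma(\beta)=1+\Theta(\beta)$ with $\Theta(\beta)$ a weighted quotient in the substituted variable $\eta=\varphi\,V$; positivity of $\Theta(\beta)$ is obtained by the one-dimensional Hardy inequality in the radial variable combined with a weighted Poincar\'e inequality on the cap (again through $\psi\sim\mathrm{dist}_g$), and the decay $\Theta(\beta)\to 0$ by explicit product test functions $f_\varepsilon(|x|)\,g_\varepsilon(x/|x|)$ with a boundary cutoff, ending with the same bound $C_{N,q}\,(\lambda(\mathcal{S}(\beta))-\Phi(\tfrac{2}{2-q}))^{-1}$. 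What your approach buys is structural: an exact identification of $\sigma(\beta)$ as an attained weighted eigenvalue on the cap, from which both strictness and the asymptotics drop out cleanly; what it costs is the extra machinery (boundary regularity and Hopf for $\psi$, compactness and attainment on the cap, plus the density step extending the Picone inequality to $\mathcal{D}^{1,2}_0(\mathcal{S}(\beta))$, all of which you correctly flag), whereas the paper's proof never needs a minimizer and works only with explicit inequalities, at the price of a less transparent lower bound for $\sigma(\beta)-1$. Do make sure, when writing this up, to justify the admissibility of $h^2/\psi$ in the cap Picone step (e.g.\ by regularizing with $\psi+\varepsilon$ as the paper does in Lemma \ref{lm:coni}) and the vanishing of the cross term $\int g\,g_t\,dt$ in the cylindrical computation; these are routine but should be stated.
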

\begin{proof}
We first prove that $\sigma(\beta,R)=\sigma(\beta,+\infty)$, then we show that $\sigma(\beta,+\infty)> 1$.
\vskip.2cm\noindent
Since $C^\infty_0(\Gamma(\beta,R))\subset C^\infty_0(\Gamma(\beta,+\infty))$, we immediately have that
\[
\sigma(\beta,R)\ge \sigma(\beta,+\infty).
\]
In order to prove the reverse inequality, for every $\varepsilon>0$ we take $\varphi_\varepsilon\in C^\infty_0(\Gamma(\beta,+\infty))$ such that
\[
\frac{\displaystyle \int_{\Gamma(\beta,+\infty)}|\nabla \varphi_\varepsilon|^2\,dx}{\displaystyle\int_{\Gamma(\beta,+\infty)} |\varphi_\varepsilon|^2\,V^{q-2}\,dx}< \sigma(\beta,+\infty)+\varepsilon.
\]
We now take the rescaled function
\[
\varphi_{\varepsilon,n}(x)=n^\frac{N-2}{2}\,\varphi_\varepsilon(n\,x),\qquad \text{ for }n\in\mathbb{N},
\]
and observe that 
\[
\frac{\displaystyle \int_{\Gamma(\beta,+\infty)}|\nabla \varphi_{\varepsilon,n}|^2\,dx}{\displaystyle\int_{\Gamma(\beta,+\infty)} |\varphi_{\varepsilon,n}|^2\,V^{q-2}\,dx}=\frac{\displaystyle \int_{\Gamma(\beta,+\infty)}|\nabla \varphi_\varepsilon|^2\,dx}{\displaystyle\int_{\Gamma(\beta,+\infty)} |\varphi_\varepsilon|^2\,V^{q-2}\,dx},
\]
thanks to the $2-$homogeneity of $V^{2-q}$.
Moreover, for $n$ large enough we also have $\varphi_{\varepsilon,n}\in C^\infty_0(\Gamma(\beta,R))$. This in turn permits to infer that
\[
\sigma(\beta,R)< \sigma(\beta,+\infty)+\varepsilon.
\]
By arbitrariness of $\varepsilon>0$, we obtain that $\sigma(\beta,R)=\sigma(\beta,+\infty)$.
\vskip.2cm\noindent
We are now left with estimating the Hardy-type constant $\sigma(\beta):=\sigma(\beta,+\infty)$. We first prove that $\sigma(\beta)>1$. For this, we recall that the function $V$ satisfies
\[
\int_{\Gamma(\beta,+\infty)} V^{q-1}\,\varphi\,dx=\int_{\Gamma(\beta,+\infty)} \langle \nabla V,\nabla \varphi\rangle\,dx,\qquad \text{ for every } \varphi \in C^\infty_0(\Gamma(\beta,+\infty)).
\]
Given $\eta\in C^\infty_0(\Gamma(\beta,+\infty))$, we use the previous identity with the choice $\varphi=\eta^2/V$.
An application of {\it Picone's identity} leads to
\[
\begin{split}
\int_{\Gamma(\beta,+\infty)} \eta^2\,V^{q-2}\,dx&= \int_{\Gamma(\beta,+\infty)}|\nabla \eta|^2\,dx-\int_{\Gamma(\beta,+\infty)} \left|\eta\,\frac{\nabla V}{V}-\nabla \eta\right|^2\,dx.
\end{split}
\]
By dividing both sides by the weighted $L^2$ norm of $\eta$, we get
\[
\begin{split}
\sigma(\beta)&=\inf_{\eta\in C^\infty_0(\Gamma(\beta,R))\setminus\{0\}} \frac{\displaystyle \int_{\Gamma(\beta,R)}|\nabla \eta|^2\,dx}{\displaystyle\int_{\Gamma(\beta,R)} |\eta|^2\,V^{q-2}\,dx}=1+\inf_{\eta\in C^\infty_0(\Gamma(\beta,R))\setminus\{0\}}\frac{\displaystyle \int_{\Gamma(\beta,R)} \left|\eta\,\frac{\nabla V}{V}-\nabla \eta\right|^2\,dx}{\displaystyle\int_{\Gamma(\beta,R)} |\eta|^2\,V^{q-2}\,dx}.
\end{split}
\]
We perform the change of variable $\eta=\varphi\,V$, so that the last minimization problem can be transformed into
\[
\Theta(\beta):=\inf_{\varphi\in C^\infty_0(\Gamma(\beta,R))\setminus\{0\}}\frac{\displaystyle \int_{\Gamma(\beta,R)} |\nabla \varphi|^2\,V^2\,dx}{\displaystyle\int_{\Gamma(\beta,R)} |\varphi|^2\,V^q\,dx}.
\]
In order to show that $\sigma(\beta)>1$, it is sufficient to prove that $\Theta(\beta)>0$. For this, we use spherical coordinates, the specific form of $V$ and the one-dimensional Hardy's inequality (see \cite[equation (1.3.1)]{Ma}), i.e. 
\[
\int_0^R |f'(t)|^2\,t^{N-1+\frac{4}{2-q}}\,dt\ge C_{N,q}\,\int_0^R |f(t)|^2\,t^{N-1+\frac{2\,q}{2-q}}\,dt.
\]
This is valid for every smooth function $f$, such that $f(R)=0$. We denote by $C_{N,q}>0$ the sharp constant, whose precise value has no bearing in what follows. By proceeding as exposed above, we get
\begin{equation}
\label{lowerboundk}
\begin{split}
\Theta(\beta)&= \inf_{\varphi\in C^\infty_0(\Gamma(\beta,R))\setminus\{0\}}\frac{\displaystyle \int_{\Gamma(\beta,R)} |\nabla \varphi|^2\,|x|^\frac{4}{2-q}\,\psi^2\,dx}{\displaystyle\int_{\Gamma(\beta,R)} |\varphi|^2\,|x|^\frac{2\,q}{2-q}\,\psi^q\,dx}\\
&=\inf_{\varphi\in C^\infty_0(\Gamma(\beta,R))\setminus\{0\}}\frac{\displaystyle \int_{\mathcal{S}(\beta)}\left(\int_{0}^R \Big[|\partial_\varrho\varphi|^2+\varrho^{-2}\,|\nabla_\tau \varphi|^2\Big]\,\varrho^{N-1+\frac{4}{2-q}}\,d\varrho\right)\,\psi^2\,dx}{\displaystyle\int_0^R\left(\int_{\mathcal{S}(\beta)}|\varphi|^2\,\psi^q\,d\mathcal{H}^{N-1}\right)\,\varrho^{N-1+\frac{2\,q}{2-q}}\,d\varrho}\\
&\ge\inf_{\varphi\in C^\infty_0(\Gamma(\beta,R))\setminus\{0\}}\frac{\displaystyle\int_0^R \left(\int_{\mathcal{S}(\beta)}\Big[ C_{N,q}\,|\varphi|^2+|\nabla_\tau \varphi|^2\Big]\,\psi^2\,d\mathcal{H}^{N-1}\right)\,\varrho^{N-1+\frac{2\,q}{2-q}}\,d\varrho}{\displaystyle \int_0^R \left(\int_{\mathcal{S}(\beta)}|\varphi|^2\,\psi^q\,d\mathcal{H}^{N-1}\right)\,\varrho^{N-1+\frac{2\,q}{2-q}}\,d\varrho}.
\end{split}
\end{equation}
We now observe that there exists a constant $C>0$ such that
\begin{equation}
\label{doublesided}
\frac{1}{C}\,\mathrm{dist}_g(\omega,\partial\mathcal{S}(\beta))\le\psi(\omega)\le C\,\mathrm{dist}_g(\omega,\partial\mathcal{S}(\beta)),\qquad \text{ for }\omega\in\mathcal{S}(\beta).
\end{equation}
Here we denote by $\mathrm{dist}_g(\cdot,\partial\mathcal{S}(\beta))$ the geodesic distance on $\mathcal{S}(\beta)$ from the boundary. Thus, for every $\varrho\in[0,R]$, we can apply the weighted Poincar\'e inequality of \cite[Theorem 8.2]{Ku} to the compactly supported function $\omega\mapsto \varphi(\varrho\,\omega)$. This gives
\[
\begin{split}
\int_{\mathcal{S}(\beta)}\Big[ C_{N,q}\,|\varphi|^2+|\nabla_\tau \varphi|^2\Big]\,\psi^2\,d\mathcal{H}^{N-1}&\ge \gamma\,\int_{\mathcal{S}(\beta)} |\varphi|^2\,d\mathcal{H}^{N-1}\\
&\ge \frac{\gamma}{\left(\max\limits_{\omega\in\mathcal{S}(\beta)}\mathrm{dist}_g(\omega,\partial\mathcal{S}(\beta))\right)^q}\,\int_{\mathcal{S}(\beta)} |\varphi|^2\,\psi^q\,d\mathcal{H}^{N-1},
\end{split}
\]
for a suitable constant $\gamma>0$. By inserting this estimate in \eqref{lowerboundk}, we get $\Theta(\beta)>0$ as desired.
\vskip.2cm\noindent
Finally, we show that $\sigma(\beta)\to 1$ as $\beta$ goes to $1$, i.e.
\begin{equation}
\label{theta1}
\lim_{\beta\to 1^-} \Theta(\beta)=0.
\end{equation}
For every $\varepsilon>0$, by definition of sharp constant we know that there exists $f_\varepsilon$ such that  
\begin{equation}
\label{almostsharp}
\int_0^R |f'_\varepsilon(t)|^2\,t^{N-1+\frac{4}{2-q}}\,dt< C_{N,q}\,(1+\varepsilon)\,\int_0^R |f_\varepsilon(t)|^2\,t^{N-1+\frac{2\,q}{2-q}}\,dt.
\end{equation}
We then take $g\in C^\infty_0(\mathcal{S}(\beta))$ and insert the test function $\varphi(x)=f_\varepsilon(|x|)\,g(x/|x|)$ in the minimization problem which defines $\Theta(\beta)$. By using spherical coordinates, recalling the definition of $V$ and using \eqref{almostsharp}, we get
\[
\begin{split}
\Theta(\beta)
&\le C_{N,q}\,(1+\varepsilon)\,\frac{\displaystyle \int_{\mathcal{S}(\beta)}|g|^2\,\psi^2\,d\mathcal{H}^{N-1}}{\displaystyle  \int_{\mathcal{S}(\beta)}|g|^2\,\psi^q\,d\mathcal{H}^{N-1}}+\frac{\displaystyle \int_{\mathcal{S}(\beta)}|\nabla_\tau g|^2\,\psi^2\,d\mathcal{H}^{N-1}}{\displaystyle  \int_{\mathcal{S}(\beta)}|g|^2\,\psi^q\,d\mathcal{H}^{N-1}}.
\end{split}
\]
By taking the limit as $\varepsilon$ goes to $0$, this gives
\[
\Theta(\beta)\le \frac{\displaystyle \int_{\mathcal{S}(\beta)}\left[C_{N,q}\,|g|^2+|\nabla_\tau g|^2\right]\,\psi^2\,d\mathcal{H}^{N-1}}{\displaystyle  \int_{\mathcal{S}(\beta)}|g|^2\,\psi^q\,d\mathcal{H}^{N-1}},
\]
for every $g\in C^\infty_0(\mathcal{S}(\beta))$. In particular, for every $\varepsilon>0$ we take a compactly supported approximation of the unit $g_\varepsilon$, i.e. $g_\varepsilon\in C^\infty_0(\mathcal{S}(\beta))$ with
\[
0\le g_\varepsilon\le 1,\qquad g_\varepsilon\equiv 1 \text{ on } \mathcal{S}(\beta+\varepsilon),\qquad |\nabla_\tau g_\varepsilon|\le \frac{C}{\varepsilon}.
\]
We also use that
\[
|\psi|\le C'\,\varepsilon,\qquad \text{ on } \mathcal{S}(\beta)\setminus\mathcal{S}(\beta+\varepsilon),
\]
which follows from \eqref{doublesided}.
Then we obtain
\[
\Theta(\beta)\le \frac{\displaystyle C_{N,q}\,\int_{\mathcal{S}(\beta)}|g_\varepsilon|^2\,\psi^2\,d\mathcal{H}^{N-1}+(C')^2\,C\,\mathcal{H}^{N-1}(\mathcal{S}(\beta)\setminus\mathcal{S}(\beta+\varepsilon))}{\displaystyle  \int_{\mathcal{S}(\beta)}|g_\varepsilon|^2\,\psi^q\,d\mathcal{H}^{N-1}}.
\]
By taking the limit as $\varepsilon$ goes to $0$, we obtain the estimate
\[
\Theta(\beta)\le C_{N,q}\,\frac{\displaystyle \int_{\mathcal{S}(\beta)}\psi^2\,d\mathcal{H}^{N-1}}{\displaystyle \int_{\mathcal{S}(\beta)}\psi^q\,d\mathcal{H}^{N-1}}.
\]
In particular, by recalling Remark \ref{oss:inversa}, we end up with the upper bound
\[
\Theta(\beta)\le C_{N,q}\,\left(\lambda(\mathcal{S}(\beta))-\Phi\left(\frac{2}{2-q}\right)\right)^{-1}.
\]
Thus the claimed asymptotic behavior \eqref{theta1} of $\Theta(\beta)$ is proved. 
\end{proof}
We can now prove the main result of this section.
\begin{proof}[Proof of Proposition \ref{prop:trac}]
Let us fix a nontrivial function $\varphi\in C^\infty_0(\Gamma(\beta,R))$ and take the sequence
\[
\varphi_n(x)=n^\frac{N-2}{2}\,\varphi(n\,x),\qquad n\in\mathbb{N}.
\]
It is not difficult to see that $\{\varphi_n\}_{n\in\mathbb{N}}$ is bounded in $\mathcal{D}^{1,2}_0(\Gamma(\beta,R))$ and it converges to $0$, strongly in $L^2(\Gamma(\beta,R))$. Indeed, we have
\begin{equation}
\label{hardy1}
\int_{\Gamma(\beta,R)} |\nabla \varphi_n|^2\,dx=\int_{\Gamma(\beta,R/n)} |\nabla \varphi_n|^2\,dx=\int_{\Gamma(\beta,R)} |\nabla \varphi|^2\,dx,
\end{equation}
and
\begin{equation}
\label{hardy2}
\int_{\Gamma(\beta,R)} |\varphi_n|^2\,dx=\int_{\Gamma(\beta,R/n)} |\varphi_n|^2\,dx=n^{-2}\,\int_{\Gamma(\beta,R)} |\varphi|^2\,dx.
\end{equation}
Let us compute the weighted $L^2$ norm of $\varphi_n$. At this aim, we observe that \eqref{scalingw} yields
\[
n^\frac{2}{2-q}\,w_{\Gamma(\beta,R),q}\left(\frac{x}{n}\right)=w_{\Gamma(\beta,n\,R),q}(x),
\]
thus by using a change of variables we get
\[
\begin{split}
\int_{\Gamma(\beta,R)} |\varphi_n|^2\,w_{\Gamma(\beta,R),q}^{q-2}\,dx&= \int_{\Gamma(\beta,R/n)}n^{N-2}\,|\varphi(n\,x)|^2\,w_{\Gamma(\beta,R),q}(x)^{q-2}\,dx\\
&=\int_{\Gamma(\beta,R)} |\varphi|^2\,w_{\Gamma(\beta,n\,R),q}^{q-2}\,dx.
\end{split}
\]
This gives
\[
\lim_{n\to\infty} \frac{\displaystyle \int_{\Gamma(\beta,R)}|\nabla \varphi_n|^2\,dx}{\displaystyle\int_{\Gamma(\beta,R)}|\varphi_n|^2\,w_{\Gamma(\beta,R),q}^{q-2}\,dx}=\lim_{n\to\infty} \frac{\displaystyle \int_{\Gamma(\beta,R)}|\nabla \varphi|^2\,dx}{\displaystyle\int_{\Gamma(\beta,R)}|\varphi|^2\,w_{\Gamma(\beta,n\,R),q}^{q-2}\,dx}.
\]
We now use that 
\[
\lim_{n\to\infty}w_{\Gamma(\beta,n\,R),q}=V,\qquad \mbox{ uniformly on } \Gamma(\beta,R),
\]
thanks to Lemma \ref{lm:coni}. This yields
\[
\lim_{n\to\infty} \frac{\displaystyle \int_{\Gamma(\beta,R)}|\nabla \varphi_n|^2\,dx}{\displaystyle\int_{\Gamma(\beta,R)}|\varphi_n|^2\,w_{\Gamma(\beta,R),q}^{q-2}\,dx}=\frac{\displaystyle \int_{\Gamma(\beta,R)}|\nabla \varphi|^2\,dx}{\displaystyle\int_{\Gamma(\beta,R)} |\varphi|^2\,V^{q-2}\,dx}.
\]
By taking the infimum over $\varphi\in C^\infty_0(\Gamma(\beta,R))$ and using Lemma \ref{lm:1}, we get 
\[
\inf_{\varphi\in C^\infty_0(\Gamma(\beta,R))} \left[\lim_{n\to\infty} \frac{\displaystyle \int_{\Gamma(\beta,R)}|\nabla \varphi_n|^2\,dx}{\displaystyle\int_{\Gamma(\beta,R)}|\varphi_n|^2\,w_{\Gamma(\beta,R),q}^{q-2}\,dx}\right]=\sigma(\beta),
\]
and thus the conclusion of point 1).
\vskip.2cm\noindent
We now show the loss of compactness. Given $t>\mathcal{C}_q(\beta)=\sigma(\beta)$, by point 1) we know that there exists $\varphi\in C^\infty_0(\Gamma(\beta,R))$ such that
\[
\lim_{n\to\infty} \frac{\displaystyle \int_{\Gamma(\beta,R)}|\nabla \varphi_n|^2\,dx}{\displaystyle\int_{\Gamma(\beta,R)}|\varphi_n|^2\,w_{\Gamma(\beta,R),q}^{q-2}\,dx}<t,
\]
where as before we set $\varphi_n(x)=n^{(N-2)/2}\,\varphi(n\,x)$. This shows that the rescaled sequence 
\[
\psi_n=\frac{\varphi_n}{\displaystyle\left(\int_{\Gamma(\beta,R)}|\varphi_n|^2\,w_{\Gamma(\beta,R),q}^{q-2}\,dx\right)^\frac{1}{2}},\qquad n\in\mathbb{N},
\]
belongs to $\mathcal{E}_{\Gamma(\beta,R),q}(t)$ for $n$ large enough. However, this sequence can not converge strongly in the weighted $L^2$ space, since by construction we have (recall \eqref{hardy1} and \eqref{hardy2})
\[
\int_{\Gamma(\beta,R)} w_{\Gamma(\beta,R),q}^{q-2}\,|\psi_n|^2\,dx=1\quad \text{ and }\quad \int_{\Gamma(\beta,R)} |\psi_n|^2\,dx=\frac{1}{n^2}\,\frac{\displaystyle\int_{\Gamma(\beta,R)} |\nabla\varphi|^2\,dx}{\displaystyle\int_{\Gamma(\beta,R)} |\varphi|^2\,w_{\Gamma(\beta,n\,R),q}^{q-2}\,dx}\to 0.
\]
This concludes the proof.
\end{proof}

\end{document}